\newtheorem{theorem}{Theorem}[section]
\newtheorem{lemma}[theorem]{Lemma}
\newtheorem*{theorem*}{Theorem}
\newtheorem{proposition}[theorem]{Proposition}
\newtheorem{thmalpha}{Theorem}
\numberwithin{equation}{section}
\def\pitem{\advance\leftskip3mm\advance\linewidth-3mm}
\def\mitem{\advance\leftskip-3mm\advance\linewidth3mm}
\gdef\SetFigFont#1#2#3#4#5{
  \reset@font\fontsize{#1}{#2pt}
  \fontfamily{#3}\fontseries{#4}\fontshape{#5}
  \selectfont}
\theoremstyle{definition}
\renewcommand{\subjclass}[1]{\thanks{\emph{2020 Mathematics Subject Classification:}~#1}}
\renewcommand{\keywords}[1]{\thanks{\emph{Keywords and Phrases:}~#1}}
\renewcommand{\date}{\thanks{\today}}
\newcommand{\CC}{\mathcal{C}}
\newcommand{\DD}{\mathcal{D}}
\newcommand{\FF}{\mathcal{F}}
\newcommand{\HH}{\mathcal{H}}
\newcommand{\II}{\mathcal{I}}
\newcommand{\MM}{\mathcal{M}}
\newcommand{\NN}{\mathcal{N}}
\newcommand{\OO}{\mathcal{O}}
\newcommand{\PP}{\mathcal{P}}
\renewcommand{\SS}{\mathcal{S}}
\newcommand{\TT}{\mathcal{T}}
\newcommand{\VV}{\mathcal{V}}
\newcommand{\fp}{\mathfrak{p}}
\newcommand{\fa}{\mathfrak{a}}
\newcommand{\fd}{\mathfrak{d}}
\newcommand{\fP}{\mathfrak{P}}
\newcommand{\Qq}{\mathbb{Q}}
\newcommand{\Zz}{\mathbb{Z}}
\newcommand{\Pp}{\mathbb{P}}
\newcommand{\vk}{\Bbbk}
\newcommand{\ve}{\varepsilon}
\def\house#1{\setbox1=\hbox{$\,#1\,$}%
\dimen1=\ht1 \advance\dimen1 by 2pt \dimen2=\dp1 \advance\dimen2
by 2pt
\setbox1=\hbox{\vrule height\dimen1 depth\dimen2\box1\vrule}%
\setbox1=\vbox{\hrule\box1}%
\advance\dimen1 by .4pt \ht1=\dimen1 \advance\dimen2 by .4pt
\dp1=\dimen2 \box1\relax}
\newcommand{\kdots}{,\ldots ,}
\newcommand{\GL}{{\rm GL}}
\newcommand{\half}{\mbox{$\textstyle{\frac{1}{2}}$}}
\newcommand{\smallfrac}[2]{\mbox{$\textstyle{\frac{#1}{#2}}$}}
\newcommand{\medfrac}[2]{\mbox{\large{$\textstyle{\frac{#1}{#2}}$}}}
\renewcommand{\mod}[3]{#1\equiv#2\,({\rm mod}\,#3)}
\renewcommand{\gcd}{{\rm gcd}}
\newcommand{\cross}{{\rm cr}}
\title{Orders with few rational monogenizations}
\subjclass{11R99, 11D61, 11J87} 
\keywords{Orders, rationally monogenic orders, rational monogenization, invariant orders of binary forms, $\GL_2(\Zz )$-equivalence, unit equations}
\author[J.-H. Evertse]{Jan-Hendrik Evertse}
\address{J.-H. Evertse \newline
         \indent Universiteit Leiden, Mathematisch Instituut, \newline
         \indent Postbus 9512, 2300 RA Leiden, The Netherlands \newline
         \indent \textit{URL:} {\tt https://pub.math.leidenuniv.nl/$\sim$evertsejh}}
\email{evertse\char'100math.leidenuniv.nl}
\begin{document}

\begin{abstract}
Recall that a monogenic order is an order of the shape $\Zz [\alpha ]$, where $\alpha$ is an algebraic integer. This is generalized to orders $\Zz_{\alpha}$ for not necessarily integral algebraic numbers $\alpha$ as follows.
For  an algebraic number $\alpha$ of degree $n$,
let $\MM_{\alpha}$ be the $\Zz$-module
generated by $1,\alpha\kdots\alpha^{n-1}$; then $\Zz_{\alpha}:=\{\xi\in\Qq (\alpha ):\, \xi\MM_{\alpha}\subseteq\MM_{\alpha}\}$ is the ring of scalars of $\MM_{\alpha}$.
We call an order of the shape $\Zz_{\alpha}$ \emph{rationally monogenic}.
If $\alpha$ is an algebraic integer, then $\Zz_{\alpha}=\Zz [\alpha ]$ is monogenic.
In fact, rationally monogenic orders are 
special cases of invariant rings of polynomials or binary forms, which were introduced 
by Birch and Merriman (1972), Nakagawa (1989), and Simon (2001).
If $\alpha ,\beta$ are two $\GL_2(\Zz )$-equivalent algebraic numbers, i.e., $\beta =\frac{a\alpha +b}{c\alpha +d}$ for some $\big(\begin{smallmatrix}a&b\\c&d\end{smallmatrix}\big)\in\GL_2(\Zz )$, then $\Zz_{\alpha}=\Zz_{\beta}$. Given an order $\OO$ of a number field,
we call a $\GL_2(\Zz )$-equivalence class of $\alpha$ with $\Zz_{\alpha}=\OO$ a \emph{rational monogenization}
of $\OO$.

We prove the following. If $K$ is a quartic number field, 
then $K$ has only finitely many orders with more than two rational monogenizations. This is best possible.
Further, if $K$ is a number field of degree $\geq 5$, the Galois group of whose normal closure is 
$5$-transitive, then $K$ has only finitely many orders with more than  one rational monogenization. The proof uses finiteness results for unit equations, which in turn were derived
from Schmidt's Subspace Theorem. 

We generalize the above results to rationally monogenic orders
over rings of $S$-integers
of number fields.

Our results extend work of B\'{e}rczes, Gy\H{o}ry and the author from 2013
on multiply monogenic orders.
\end{abstract}
 
\maketitle

\section{Introduction}\label{section1}

\textbf{Summary.} Recall that a monogenic order is an order of the shape $\Zz [\alpha ]$, where $\alpha$ is an algebraic integer. This is generalized to orders $\Zz_{\alpha}$ for not necessarily integral algebraic numbers $\alpha$ as follows. For an algebraic number $\alpha$ of degree $n$,
let $\MM_{\alpha}$ be the $\Zz$-module generated by $1,\alpha\kdots\alpha^{n-1}$; 
then $\Zz_{\alpha}:=\{\xi\in\Qq (\alpha ):\, \xi\MM_{\alpha}\subseteq\MM_{\alpha}\}$ is the ring of scalars of $\MM_{\alpha}$.
We call an order of the shape $\Zz_{\alpha}$ \emph{rationally monogenic}.
If $\alpha$ is an algebraic integer, then $\Zz_{\alpha}=\Zz [\alpha ]$ is monogenic.
Rationally monogenic orders are invariant rings of primitive polynomials or binary forms,
see, e.g., \cite{BM72}, \cite{N89}, \cite{S01}, \cite{S03}, \cite{DDS05}, \cite{W11}, \cite[Chap. 16]{EG17}.
If $\alpha ,\beta$ are two $\GL_2(\Zz )$-equivalent algebraic numbers, i.e., $\beta =\frac{a\alpha +b}{c\alpha +d}$ for some $\big(\begin{smallmatrix}a&b\\c&d\end{smallmatrix}\big)\in\GL_2(\Zz )$, then 
$\Zz_{\alpha}=\Zz_{\beta}$. Given an order $\OO$ of a number field,
we call a $\GL_2(\Zz )$-equivalence class of $\alpha$ with $\Zz_{\alpha}=\OO$ a \emph{rational monogenization}
of $\OO$.

We prove the following. If $K$ is a quartic number field, 
then $K$ has only finitely many orders with more than two rational monogenizations. This is best possible.
Further, if $K$ is a number field of degree $\geq 5$, the Galois group of whose normal closure is 
$5$-transitive, then $K$ has only finitely many orders with more than  one rational monogenization. The proof uses finiteness results for unit equations, which in turn were derived
from Schmidt's Subspace Theorem. 
Except for the hypothesis on the normal closure of $K$, our result implies a conjecture posed in \cite{BEGRS22}.

We generalize the above results to rationally monogenic orders
over rings of $S$-integers
of number fields. Our results extend work of B\'{e}rczes, Gy\H{o}ry and the author \cite{BEG13}
on monogenic orders.
\vskip0.2cm

\textbf{Background and results.} Let $K$ be a number field. Denote its ring of integers by $\OO_K$.
An order $\OO$ of $K$ (i.e., a subring of $K$ that as a $\Zz$-module is free of rank $[K:\Qq ]$)
is called \emph{monogenic} if there is $\alpha\in\OO$ with $\OO =\Zz [\alpha ]$. 
The set of $\alpha$ with $\Zz [\alpha ]=\OO$ can be divided into so-called $\Zz$-equivalence classes,
where $\alpha_1,\alpha_2$ are called $\Zz$-equivalent if $\alpha_1-\alpha_2\in\Zz$ or $\alpha_1+\alpha_2\in\Zz$. A $\Zz$-equivalence class of $\alpha$ with $\Zz [\alpha ]=\OO$ is called a
\emph{monogenization} of $\OO$.
Every order of a quadratic number field has precisely one monogenization.
Orders of number fields of degree $\geq 3$ may be non-monogenic or have more than one monogenization.
From work of Gy\H{o}ry \cite{G73}, \cite{G76} it can be deduced, and in fact in an effective
form, that if $K$ is any number field of degree $\geq 3$ 
then every order $\OO$ of $K$ has at most finitely many monogenizations.
If one keeps the number field $K$ fixed and restricts to monogenic orders of  $K$, then most of these
have only few monogenizations. 
B\'{e}rczes, Gy\H{o}ry and the author 
\cite[Theorem 1.1]{BEG13} obtained the following result.

\begin{thmalpha}\label{thmA}
Let $K$ be a number field of degree $\geq 3$. Then $K$ has only finitely many orders
with more than two monogenizations.
\end{thmalpha}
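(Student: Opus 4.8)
The plan is to show that the set of orders $\OO$ of $K$ possessing three pairwise $\Zz$-inequivalent generators is finite. So suppose $\OO=\Zz[\alpha_1]=\Zz[\alpha_2]=\Zz[\alpha_3]$ with $\alpha_1,\alpha_2,\alpha_3$ in three distinct monogenizations, let $n=[K:\Qq]\ge 3$, let $L$ be the normal closure of $K/\Qq$ with ring of integers $\OO_L$, and denote by $\alpha^{(1)}\kdots\alpha^{(n)}$ the conjugates of a generator $\alpha$ in $L$. The decisive first observation is a \emph{units lemma}: whenever $\Zz[\alpha]=\Zz[\beta]=\OO$, the quotient $(\beta^{(i)}-\beta^{(j)})/(\alpha^{(i)}-\alpha^{(j)})$ lies in $\OO_L^{*}$ for all $i\neq j$. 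Indeed, $\beta=f(\alpha)$ and $\alpha=g(\beta)$ for some $f,g\in\Zz[X]$, and writing $f(X)-f(Y)=(X-Y)\,h(X,Y)$ with $h\in\Zz[X,Y]$ shows that $(\beta^{(i)}-\beta^{(j)})/(\alpha^{(i)}-\alpha^{(j)})=h(\alpha^{(i)},\alpha^{(j)})$ is an algebraic integer; the symmetric argument applied to $g$ shows its inverse is too. The crucial point is that these units live in the \emph{fixed} finitely generated group $\OO_L^{*}$, whose rank depends only on $K$ and not on $\OO$.

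Next I would manufacture a unit equation. Fix three embeddings and set $\rho(\alpha)=(\alpha^{(1)}-\alpha^{(2)})/(\alpha^{(1)}-\alpha^{(3)})$, so that $\rho(\alpha)+(1-\rho(\alpha))=1$. Writing $\rho_m=\rho(\alpha_m)$, the units lemma gives $\rho_m=\eta_1^{(m)}\rho_1$ and $1-\rho_m=\eta_2^{(m)}(1-\rho_1)$ with $\eta_1^{(m)},\eta_2^{(m)}\in\OO_L^{*}$. Thus with $(s,t)=(\rho_1,1-\rho_1)$ the three identities $s\,\eta_1^{(m)}+t\,\eta_2^{(m)}=1$ ($m=1,2,3$) together with $s+t=1$ form an overdetermined linear system in $(s,t)$; consistency forces
\[
\det\begin{pmatrix}\eta_1^{(2)}&\eta_2^{(2)}&1\\ \eta_1^{(3)}&\eta_2^{(3)}&1\\ 1&1&1\end{pmatrix}=0 .
\]
This is a six-term equation \emph{purely} in units of the fixed group $\OO_L^{*}$, the order-dependent quantity $\rho_1$ having been eliminated. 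By the finiteness results for $S$-unit equations derived from Schmidt's Subspace Theorem (here $S$ is the set of infinite places of $L$), the tuples $(\eta_1^{(2)},\eta_2^{(2)},\eta_1^{(3)},\eta_2^{(3)})$ that yield a \emph{non-degenerate} solution lie in a finite set, independent of $\OO$.

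To return from units to orders, observe that each non-degenerate solution makes the linear system for $(s,t)$ of rank two, pinning $\rho_1$ to one of finitely many values. Carrying this out for every triple of embeddings fixes all cross-ratios of $\alpha_1^{(1)}\kdots\alpha_1^{(n)}$, hence the $\PGL_2$-class of the configuration of conjugates, to finitely many possibilities; Galois-equivariance then confines $\alpha_1$ to finitely many $\GL_2(\Qq)$-orbits, and the invariant-order framework of Birch--Merriman, Nakagawa and Simon (and the $\GL_2(\Zz)$-reduction theory underlying it) converts this into finitely many orders $\OO$.

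The main obstacle is the analysis of the \emph{degenerate} solutions of the displayed unit equation, i.e.\ those in which a proper subsum vanishes. This is exactly where the threshold \emph{two} must emerge: I expect that every vanishing-subsum relation forces a coincidence among the conjugate differences of the three generators which collapses two of the $\alpha_m$ into a single monogenization (or is incompatible with $n\ge 3$), so that a genuine \emph{third} monogenization is possible only through a non-degenerate solution, of which there are finitely many. Making this combinatorial case distinction precise, and checking that the degenerate families do not smuggle in infinitely many orders, is the delicate step; the clean passage from a finite set of cross-ratio configurations to a finite set of orders is the secondary technical point, and both are where the degree hypothesis is used.
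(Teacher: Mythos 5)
Your skeleton is the right one, and it matches the architecture the paper uses (Theorem \ref{thmA} itself is only quoted here from \cite{BEG13}, but the paper's proof of Theorem \ref{thm1.1} is the direct analogue): a units lemma for differences of conjugates, a Siegel-type identity turned into the unit equation $sx+ty=1$ with $s+t=1$ admitting three solutions $(1,1)$, $(\eta_1^{(2)},\eta_2^{(2)})$, $(\eta_1^{(3)},\eta_2^{(3)})$, finiteness of the possible coefficients $(s,t)$, and then a descent from $\Qq$-equivalence classes of generators to finitely many orders. Your units lemma and its proof via $\beta=f(\alpha)$, $\alpha=g(\beta)$ are correct, and you have correctly located where the threshold ``two'' enters (three inequivalent generators give three solutions). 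However, there are two gaps. The first is self-inflicted: instead of citing the theorem of Evertse--Gy\H{o}ry--Stewart--Tijdeman (Lemma \ref{lem3.2} of the paper), which states exactly that $ax+by=1$ with $a+b=1$ has more than two solutions in a finitely generated group for only finitely many pairs $(a,b)$, you re-derive its proof via a determinantal six-term unit equation and then leave the analysis of vanishing subsums as a conjecture (``I expect that\ldots''). The degenerate cases do not all ``collapse two monogenizations''; handling them is the substance of that cited theorem, so this gap is closable, but only by invoking the result you are trying to reprove.

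The second gap is the serious one. Fixing all the ratios $(\alpha^{(i)}-\alpha^{(j)})/(\alpha^{(i)}-\alpha^{(k)})$ confines $\alpha_1$ to finitely many affine $\Qq$-classes $\alpha\mapsto u\alpha+v$ ($u,v\in\Qq$), not yet to finitely many orders: a single such class contains infinitely many $\Zz$-inequivalent algebraic integers $u\alpha+v$, generating infinitely many distinct orders, and you must rule out that infinitely many of these are again multiply monogenic. Your one-sentence appeal to ``the invariant-order framework and $\GL_2(\Zz)$-reduction theory'' does not do this. In the paper this step is Proposition \ref{prop5.1}, which needs both Lang's theorem (Lemma \ref{lem5.2}) applied to the partner $\beta$ of each special $\alpha$ in the class, and the entire local analysis of Section \ref{section4} (Proposition \ref{prop4.1}) to bound the denominator of the transformation connecting two special elements of the same $\Qq$-class in terms of the discriminant and the ideal $\fa(\alpha,\beta)$. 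Without an argument of this kind the proof is incomplete: everything up to ``finitely many $\Qq$-classes'' is sound, but the passage to ``finitely many orders'' is precisely where a large part of the work lives.
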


This result is optimal. For instance, if $\varepsilon$ is a unit of $\OO_K$ with $\Qq (\varepsilon )=K$, then $\Zz [\varepsilon ]=\Zz [\varepsilon^{-1}]$, while $\varepsilon$ and $\varepsilon^{-1}$
are not $\Zz$-equivalent.  More generally, let $\alpha \in \OO_K$ be such that $\Qq (\alpha )=K$, 
suppose there are integers $c,d$ such that $c\alpha +d$ is a unit of $\OO_K$, let $a,b$ be integers such that  $\big(\begin{smallmatrix}a&b\\c&d\end{smallmatrix}\big)\in\GL_2(\Zz )$, and put 
$\beta := \medfrac{a\alpha +b}{c\alpha +d}$. Then $\Zz [\alpha ]=\Zz [\beta ]$,
while $\alpha$ and $\beta$ are not $\Zz$-equivalent.

This suggests that it is natural to consider $\GL_2(\Zz )$-equivalence classes of $\alpha$ with $\Zz [\alpha ]=\OO$.
Here, $\alpha,\beta\in K$ are called $\GL_2(\Zz )$-equivalent if there is
$\big(\begin{smallmatrix}a&b\\c&d\end{smallmatrix}\big)\in\GL_2(\Zz )$ such that $\beta =\medfrac{a\alpha +b}{c\alpha +d}$.

We say that a group $G$ acts $t$-transitively on a finite set $\SS$
if for any pairwise distinct $i_1\kdots i_t\in\SS$ and pairwise distinct $j_1\kdots j_t\in\SS$, there is $\sigma\in G$
such that $\sigma (i_1)=j_1\kdots \sigma (i_t)=j_t$.
If $K =\Qq (\alpha )$ and $L$ is the normal closure of $K$, we say that ${\rm Gal}(L/\Qq )$
is $t$-transitive if it acts $t$-transitively on the set of conjugates
$\{ \alpha^{(1)}\kdots \alpha^{(n)}\}$ of $\alpha$.

Then from \cite[Theorems 1.1 and 1.2(ii)]{BEG13}, the following can be deduced:

\begin{thmalpha}\label{thmB}
Let $K$ be a number field of degree $\geq 5$ such that the Galois group of its normal closure is 
$4$-transitive. Then for all orders $\OO$ of $K$ with at most finitely many exceptions, the set of $\alpha$ with $\Zz [\alpha ]=\OO$ is contained in at most one $\GL_2(\Zz )$-equivalence class.
\end{thmalpha}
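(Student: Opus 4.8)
The plan is to deduce Theorem \ref{thmB} from Theorem \ref{thmA} (that is, \cite[Theorem 1.1]{BEG13}) together with \cite[Theorem 1.2(ii)]{BEG13}, the decisive extra ingredients being the $\PGL_2$-invariance of the cross-ratio and the $4$-transitivity hypothesis. Fix $\al$ with $K=\Qq (\al )$, let $\al^{(1)}\kdots\al^{(n)}$ be its conjugates, and let $G={\rm Gal}(L/\Qq )$ act $4$-transitively on $\{\al^{(1)}\kdots\al^{(n)}\}$. The observation driving everything is that a $\GL_2(\Zz )$-transformation $\al\mapsto\frac{a\al +b}{c\al +d}$ acts on the conjugates as a single Möbius transformation, and hence preserves every cross-ratio
\[
\cross\big(\al^{(i)},\al^{(j)};\al^{(k)},\al^{(l)}\big)=\frac{(\al^{(i)}-\al^{(k)})(\al^{(j)}-\al^{(l)})}{(\al^{(i)}-\al^{(l)})(\al^{(j)}-\al^{(k)})};
\]
conversely, equality of all such cross-ratios under a Galois-compatible labelling of the conjugates of $\al$ and $\be$ is precisely what encodes a $\GL_2(\Qq )$-equivalence $\al\mapsto\be$.

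First I would discard the finitely many orders allowed by Theorem \ref{thmA}: after removing them, every remaining order $\OO$ of $K$ has at most two monogenizations. If $\OO$ has at most one there is nothing to prove, so the essential case is that of an order with exactly two $\Zz$-equivalence classes of generators, represented by integral $\al ,\be$ with $\Zz [\al ]=\Zz [\be ]=\OO$; the goal then becomes to show that, apart from finitely many further exceptional $\OO$, these two classes fuse into a single $\GL_2(\Zz )$-class, i.e. $\be$ is $\GL_2(\Zz )$-equivalent to $\al$.

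Next I would invoke \cite[Theorem 1.2(ii)]{BEG13}, which (outside finitely many orders, to be absorbed into the exceptional set) links the two monogenizations $\al ,\be$ by a cross-ratio relation: there is a permutation $\pi$ of $\{1\kdots n\}$ with $\cross(\al^{(i)},\al^{(j)};\al^{(k)},\al^{(l)})=\cross(\be^{(\pi i)},\be^{(\pi j)};\be^{(\pi k)},\be^{(\pi l)})$ for all quadruples of distinct indices. This is the step resting on the finiteness of solutions of unit equations derived from Schmidt's Subspace Theorem, and it is the content borrowed from \cite{BEG13}. Since a single cross-ratio identity determines a Möbius map only on a triple of points, it is exactly here that the $4$-transitivity of $G$ enters: it forces the cross-ratio matching to hold simultaneously and Galois-equivariantly across all quadruples, so that the Möbius map $\vp$ with $\vp (\al^{(i)})=\be^{(i)}$ commutes with the $G$-action and is therefore defined over $\Qq$. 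Thus $\be =\frac{a\al +b}{c\al +d}$ for some $\big(\begin{smallmatrix}a&b\\c&d\end{smallmatrix}\big)\in\GL_2(\Qq )$.

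The hard part, and the last step, is the descent from $\GL_2(\Qq )$ to $\GL_2(\Zz )$. Here I would use that $\al$ and $\be$ generate the \emph{same} order, not merely the same field: the equality $\Zz [\al ]=\Zz [\be ]$ fixes the common discriminant and index, and a direct computation should show that a $\Qq$-rational Möbius transformation carrying one integral generator of $\OO$ to another while preserving the discriminant must, after clearing denominators and normalizing the determinant to $\pm 1$, be representable by a unimodular integral matrix, i.e. lie in $\GL_2(\Zz )$. The orders for which this integrality argument fails are expected to be finite in number and are thrown into the exceptional set. Combining the three steps shows that, apart from finitely many orders $\OO$ of $K$, the set of $\al$ with $\Zz [\al ]=\OO$ is contained in a single $\GL_2(\Zz )$-equivalence class, which is the assertion of Theorem \ref{thmB}.
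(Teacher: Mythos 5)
The paper never actually proves Theorem \ref{thmB}: it is quoted from \cite{BEG13} with the one-line remark that it ``can be deduced'' from Theorems 1.1 and 1.2(ii) there, so the only meaningful comparison is with that deduction and with the machinery the paper builds for its generalization, Theorem \ref{thm1.1}. Your skeleton is the right one: Theorem \ref{thmA} removes the orders with three or more monogenizations, and the work concerns orders with exactly two inequivalent generators $\alpha,\beta$. Your last two steps are also essentially sound on their own terms: if $\cross_{ijkl}(\alpha)=\cross_{ijkl}(\beta)$ for all quadruples, then Lemma \ref{lem2.3}(ii) gives $\GL_2(\Qq)$-equivalence (your permutation $\pi$ must centralize the Galois group, and the centralizer of a $2$-transitive group is trivial, so $\pi=\mathrm{id}$), and Lemma \ref{lem2.4} then upgrades this to $\GL_2(\Zz)$-equivalence with \emph{no} exceptional orders --- so your hedge that the integrality ``is expected to fail for only finitely many orders'' is both unproved and unnecessary.

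The genuine gap is the middle step, which is where all of the content lives. The ``cross-ratio relation'' you attribute to \cite[Theorem 1.2(ii)]{BEG13} is, by the very Lemmas \ref{lem2.3} and \ref{lem2.4} you then invoke, equivalent to the conclusion you are trying to reach; as written, the argument outsources the theorem to a black box whose statement you have guessed, and the guess points in the wrong direction. The actual mechanism, both in \cite{BEG13} and in this paper's proof of Theorem \ref{thm1.1}, is the reverse of what you describe: if $\Zz[\alpha]=\Zz[\beta]$ and $\alpha,\beta$ are $\GL_2(\Zz)$-\emph{in}equivalent, then \emph{all} their cross ratios differ (this is \eqref{eq3.1}); the quotients $\varepsilon_{ijkl}=\cross_{ijkl}(\beta)/\cross_{ijkl}(\alpha)$ are units (Lemma \ref{lem2.2}) satisfying the unit equations \eqref{eq3.3} and, after exploiting transitivity, \eqref{eq3.8}; the finiteness theorems for unit equations (Lemmas \ref{lem3.2}, \ref{lem3.3}, \ref{lem5.2}) then show that the cross ratios of such exceptional $\alpha$ range over a \emph{finite set}, so these $\alpha$ lie in finitely many $\GL_2(\Qq)$-classes (Proposition \ref{prop3.1}); and a second, separate finiteness argument (Propositions \ref{prop4.1} and \ref{prop5.1}) is needed to show that each such $\GL_2(\Qq)$-class contains only finitely many $\GL_2(\Zz)$-classes of such numbers, whence only finitely many orders. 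Your sketch contains no substitute for either of these two finiteness arguments, and the ``direct computation'' you defer to in the descent step is never carried out.
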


It is not known whether the condition on the normal closure of $K$ is necessary. It can be proved in an elementary way that if $K$ is a cubic number field and $\OO$ an order of $K$, then the set of $\alpha\in\OO$ with $\Zz [\alpha ]=\OO$ is contained in at most one $\GL_2(\Zz )$-equivalence class. For quartic number fields $K$, the above theorem is false. In fact, \cite[end of Section 1]{BEG13} gives the following
construction:

\begin{thmalpha}\label{thmC}
Let $r,s$ be integers such that $f(X)=(X^2-r)^2-X-s$ is irreducible, and let $K=\Qq (\alpha )$,
where $\alpha$ is a root of $f$. Then $K$ has infinitely many orders $\OO_m$ ($m=1,2,\ldots$)
with the following property: $\OO_m=\Zz [\alpha_m]=\Zz [\beta_m]$, where $\beta_m=\alpha_m^2-r_m$,
$\alpha_m =\beta_m^2-s_m$ for some integers $r_m,s_m$.
\end{thmalpha}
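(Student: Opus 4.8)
\emph{Reduction to a single generator.} The plan is to translate the prescribed shape into conditions on one generator and then to manufacture infinitely many such generators by a unit (Pell-type) argument. First, if $\theta\in\OO_K$ generates $K$ and its minimal polynomial has the shape $(X^2-r)^2-X-s$ with $r,s\in\Zz$, then setting $\beta:=\theta^2-r$ gives $\beta^2-\theta-s=0$, i.e. $\theta=\beta^2-s$; since $\beta\in\Zz[\theta]$ and $\theta\in\Zz[\beta]$ we get $\Zz[\theta]=\Zz[\beta]$, so every such $\theta$ automatically produces an order with (at least) two monogenizations. Expanding $(X^2-r)^2-X-s=X^4-2rX^2-X+(r^2-s)$, a monic quartic has this shape precisely when its cubic coefficient is $0$, its linear coefficient is $-1$, and its quadratic coefficient is even. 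In terms of the elementary symmetric functions $e_i(\theta)$ of the conjugates this says $\mathrm{Tr}(\theta)=e_1(\theta)=0$, $e_3(\theta)=1$, and $e_2(\theta)\in2\Zz$ (then $r=-e_2(\theta)/2$, $s=r^2-e_4(\theta)$). Thus it suffices to exhibit infinitely many $\theta\in\OO_K$ with $\Qq(\theta)=K$, $\mathrm{Tr}(\theta)=0$, $e_3(\theta)=1$, $e_2(\theta)$ even, lying in infinitely many distinct orders $\Zz[\theta]$; the given $\theta=\alpha$ (and $\theta=\beta$) already meets the first three conditions.

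\emph{The equation $e_3=1$ as a norm/unit equation.} On the rank-$3$ lattice $\OO_K^{0}$ of trace-zero integers the map $\theta\mapsto e_3(\theta)$ is a cubic form, and for trace-zero $\theta$ one has $e_3(\theta)=\tfrac13\mathrm{Tr}(\theta^3)=\tfrac13\sum_i(\theta^{(i)})^3$, which (using $\sum_i\theta^{(i)}=0$) factors as $e_3(\theta)=\prod_{j=2}^{4}\bigl(\theta^{(1)}+\theta^{(j)}\bigr)$ into linear forms over the normal closure $L$ of $K$. Hence $e_3(\theta)=1$ is a decomposable-form (norm-type) equation, exactly the kind controlled by the unit and $S$-unit equation machinery invoked elsewhere in the paper. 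The three linear factors are the partition sums $\theta^{(i)}+\theta^{(j)}$, which generate the cubic resolvent of $K$; I would make this correspondence explicit so as to identify the subgroup of units of the relevant order in the resolvent that preserves the lattice $\OO_K^{0}$ and fixes the value $e_3$, and then let this group act on the solution coming from $\alpha$.

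\emph{Producing infinitely many solutions.} The crux is to show this unit group is infinite, i.e. that $e_3(\theta)=1$ has infinitely many solutions $\theta\in\OO_K^{0}$. When the resolvent is a cubic field (for instance whenever $\mathrm{Gal}(L/\Qq)$ is $A_4$ or $S_4$, the generic situation) its unit rank is positive, so a fundamental unit $\varepsilon$ of a suitable order in the resolvent has infinite order; transporting its powers through the correspondence above yields an infinite sequence $\theta_1,\theta_2,\ldots\in\OO_K^{0}$ with $e_3(\theta_m)=1$. In the remaining cases the resolvent degenerates to $\Qq\times F$ or $\Qq^{3}$, and one must verify that the relevant quadratic (Pell) or multiplicative unit group is still infinite; this is the one place that genuinely uses the special shape of $f$, and is where I expect the main difficulty to lie. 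Since $\{\theta:\Qq(\theta)\subsetneq K\}$ is a proper closed locus and the condition $e_2(\theta)\equiv0\ (\mathrm{mod}\ 2)$ is an eventually periodic constraint along the orbit, infinitely many $\theta_m$ survive as degree-$4$ generators with $e_2(\theta_m)$ even.

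\emph{Distinctness of the orders.} Along the unit orbit the height of $\theta_m$ grows without bound, so $r_m=-e_2(\theta_m)/2$ and $s_m=r_m^2-e_4(\theta_m)$ tend to infinity and $\bigl|\mathrm{disc}\,\Zz[\theta_m]\bigr|=\bigl|\mathrm{disc}\,f_m\bigr|\to\infty$; as only finitely many orders of $K$ have bounded discriminant, the orders $\OO_m:=\Zz[\theta_m]=\Zz[\beta_m]$ include infinitely many distinct ones, each carrying the two displayed monogenizations $\beta_m=\alpha_m^2-r_m$, $\alpha_m=\beta_m^2-s_m$. To summarize, the routine parts are the reduction and the discriminant growth; the essential obstacle is the non-degeneracy in the third paragraph, namely guaranteeing that the decomposable form equation $e_3=1$ has infinitely—rather than finitely—many integral solutions for every admissible $(r,s)$, which is precisely a positive-rank (Pell/unit) statement about the resolvent of $f$.
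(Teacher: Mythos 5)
The paper does not actually prove Theorem \ref{thmC}: it is quoted from the end of Section 1 of \cite{BEG13}, where the numbers $\alpha_m,\beta_m$ are produced by an explicit construction. So the real question is whether your argument stands on its own, and as written it does not. Your reduction is correct and nicely put: writing $(X^2-r)^2-X-s=X^4-2rX^2-X+(r^2-s)$, the problem becomes that of finding infinitely many $\theta\in\OO_K$ with $\Qq(\theta)=K$, $e_1(\theta)=0$, $e_3(\theta)=1$, $e_2(\theta)$ even, lying in infinitely many distinct orders; and the identity $e_3(\theta)=\prod_{j=2}^{4}\bigl(\theta^{(1)}+\theta^{(j)}\bigr)$ on the trace-zero hyperplane checks out. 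But the entire content of the theorem is then concentrated in the claim that this decomposable cubic form represents $1$ infinitely often on the rank-three lattice $\OO_K^{0}$, and that is precisely the step you leave open (you yourself flag it as ``the essential obstacle''). A proof proposal whose one nontrivial step is declared to be the expected difficulty is not a proof.

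Moreover, the mechanism you propose for that step is not merely unfinished but doubtful as stated. The three linear forms $\theta^{(1)}+\theta^{(j)}$ are permuted by ${\rm Gal}(L/\Qq)$ only \emph{up to sign} (since $\theta^{(1)}+\theta^{(2)}=-(\theta^{(3)}+\theta^{(4)})$ when $e_1=0$); each of them generically generates a sextic field, and only their squares live in the cubic resolvent. Hence $e_3|_{\OO_K^{0}}$ is not the norm form of a full module in the resolvent field, and one cannot invoke ``positive unit rank of the resolvent $\Rightarrow$ infinitely many representations of $1$.'' A decomposable cubic form in three variables splitting into independent linear forms can represent $1$ only finitely often even when the splitting field has units of infinite order; to get infinitely many solutions you must exhibit a concrete multiplicative structure (an infinite group of automorphisms of $\OO_K^{0}$ preserving the form), which is exactly what is missing. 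The subsidiary claims --- that the parity condition $2\mid e_2(\theta_m)$ is ``eventually periodic'' along the orbit and is satisfied infinitely often, and that the degenerate-resolvent cases still yield an infinite unit group --- are likewise asserted rather than proved. To repair this you would either have to carry out the resolvent-unit analysis in full (identifying the automorphism group and verifying it is infinite and compatible with the congruence conditions), or, more in the spirit of \cite{BEG13}, give a direct construction of the $\alpha_m$ (for instance a recursion producing roots of $(X^2-r_m)^2-X-s_m$ inside the fixed field $K$), which bypasses the solvability analysis entirely; the discriminant-growth argument you give at the end would then finish the proof.
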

\noindent
It is clear that $\alpha_m,\beta_m$ in the above theorem are not $\GL_2(\Zz )$-equivalent.

Our aim is to generalize Theorem \ref{thmB} to orders attached to non-integral algebraic numbers.
Let $\alpha$ be an algebraic number of degree $n$ and 
$f_{\alpha}\in\Zz [X]$ its primitive minimal
polynomial, i.e., with coefficients having gcd $1$. Then the order $\Zz_{\alpha}$
attached to $\alpha$ is 
the invariant ring or order of $f_{\alpha}$, see Nakagawa \cite{N89}, 
Simon \cite{S01} or \cite{BM72}, \cite{S03}, \cite{DDS05}, \cite{W11}, \cite[Chap. 16]{EG17}.
Nakagawa and Simon defined this order by giving a $\Zz$-module basis for it, together
with a multiplication table.  
A direct definition of $\Zz_{\alpha}$ is as follows. Define the $\Zz$-module
\begin{equation}\label{eq1.1}
\MM_{\alpha}:=\big\{ x_0+x_1\alpha +\cdots +x_{n-1}\alpha^{n-1}:\, x_0\kdots x_{n-1}\in\Zz\}.
\end{equation}
Then $\Zz_{\alpha}$ is the ring of scalars of $\MM_{\alpha}$, i.e.,
\begin{equation}\label{eq1.2}
\Zz_{\alpha}:=\{ \xi \in\Qq (\alpha ):\, \xi\MM_{\alpha}\subseteq\MM_{\alpha}\}.
\end{equation}
If $\alpha$ is an algebraic integer, then $\alpha^i\in\MM_{\alpha}$ for $i\geq n$,
and thus, $\Zz_{\alpha}=\MM_{\alpha}=\Zz [\alpha]$.
Further, if $\alpha ,\beta$ are $\GL_2(\Zz )$-equivalent, i.e.,
$\beta =\medfrac{a\alpha +b}{c\alpha +d}$ for some 
$\big(\begin{smallmatrix}a&b\\c&d\end{smallmatrix}\big)\in\GL_2(\Zz )$,
then one easily verifies that
$\MM_{\beta }=\\(c\alpha +d)^{1-n}\MM_{\alpha}$,
which implies $\Zz_{\beta}=\Zz_{\alpha}$. 

To simplify the formulation of our results, we introduce the following
terminology. We call an order $\OO$ of a number field $K$
\emph{rationally monogenic} if $\OO =\Zz_{\alpha}$ for some $\alpha$ with $K=\Qq (\alpha )$.
A $\GL_2(\Zz )$-equivalence class of $\alpha$ with $\Zz_{\alpha}=\OO$
is called a \emph{rational monogenization} of $\OO$. 

We give some other descriptions for $\Zz_{\alpha}$. Let again $\alpha$ be an algebraic number of degree $n$, and denote by $f_{\alpha}$
its primitive minimal polynomial, i.e., $f_{\alpha}=a_0X^n+\cdots +a_n\in\Zz [X]$ with $a_0>0$
and $\gcd (a_0\kdots a_n)=1$. Then $\Zz_{\alpha}$ is the $\Zz$-module with basis
\begin{equation}\label{eq1.3}
1,\omega_1\kdots \omega_{n-1},\ \ \omega_i=a_0\alpha^i+a_1\alpha^{i-1}+\cdots a_{i-1}\alpha\ \ 
(i=1\kdots n-1)
\end{equation}
(see \cite[p. 365, Thm. 16.2.9, formula (16.2.7)]{EG17} or Lemma \ref{lem2.0} in the present paper).
This is precisely the invariant order of $f_{\alpha}$ as defined by Nakagawa \cite{N89} and Simon
\cite{S01}. Del Corso, Dvornicich and Simon \cite[Prop. 2]{DDS05} (see also Lemma \ref{lem2.0}
in the present paper) proved the much simpler expression
\[
\Zz_{\alpha}=\Zz [\alpha ]\cap \Zz [\alpha^{-1}].
\] 
From the basis \eqref{eq1.3}
one deduces that
the discriminant
of the order $\Zz_{\alpha}$ is equal to the discriminant of $f_{\alpha}$, i.e.,
\begin{align}\label{eq1.4}
D(\Zz_{\alpha}) &=D_{\Qq (\alpha )/\Qq}(1,\omega_1\kdots\omega_{n-1})
\\
\notag
&=
a_0^{2n-2}D_{\Qq (\alpha)/\Qq}(1,\alpha\kdots \alpha^{n-1})
\\
\notag
&=
a_0^{2n-2}\prod_{1\leq i<j\leq n}(\alpha^{(i)}-\alpha^{(j)})^2
=D(f_{\alpha}),
\end{align}
where $\alpha^{(1)}\kdots\alpha^{(n)}$ are the conjugates of $\alpha$.

The orders $\Zz_{\alpha}$ are part of a much more general theory on invariant rings of binary forms, 
see \cite{N89}, \cite{S03}, \cite{DDS05}, \cite{W11}, \cite[Chap. 16]{EG17}. 
We briefly comment on this at the end of this section. 

It follows from the work of Birch and Merriman \cite{BM72} on binary forms that an order
of a number field has at most finitely many rational monogenizations.
Gy\H{o}ry and the author \cite[Cor. 2]{EG91} proved that every algebraic number $\alpha$
of degree $n$ is $\GL_2(\Zz )$-equivalent to an algebraic number $\alpha^*$ with height
$H(\alpha^*)\leq C(n,D)$, where $H(\alpha^*)$ is the maximum of the absolute values
of the coefficients of $f_{\alpha^*}$, $D$ is the discriminant of $f_{\alpha}$, and $C(n,D)$
is effectively computable. Together with \eqref{eq1.4} this implies that it can be decided 
effectively whether a given order of a number field has rational monogenizations and that these
can be determined effectively.

It can be shown that a rationally monogenic order $\OO$ of a number field
of degree $\geq 3$ is \emph{primitive}, i.e., there are no
order $\OO'$ and integer $a>1$ such that $\OO =\Zz +a\OO'$. It follows from
classical work of Delone and Faddeev \cite{DF40} that every primitive order of a cubic
number field has precisely one rational monogenization. Further, 
work of B\'{e}rczes, Gy\H{o}ry and the author \cite{BEG04} implies
that an order of a number field of degree $n\geq 4$ cannot have more than $n\cdot 2^{24n^3}$
rational monogenizations. Gy\H{o}ry and the author \cite[Chap. 17]{EG17} improved this to $2^{5n^2}$.
From recent work of Bhargava \cite{B22} it follows that for quartic orders this bound can be improved
to $40$.

We are now ready to state the main result of this paper,
which gives a generalization of Theorem \ref{thmB}
to not necessarily integral algebraic numbers $\alpha$.

\begin{theorem}\label{thm1.1}
(i) Let $K$ be a quartic number field. 
Then $K$ has only finitely many orders 
with more than two rational monogenizations.
\\[0.15cm]
(ii) Let $K$ be a number field of degree $\geq 5$ 
and suppose that the Galois group of its normal closure is $5$-transitive. 
Then $K$ has only finitely many orders with more than one rational monogenization.
\end{theorem}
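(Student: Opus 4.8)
The plan is to reduce the statement to the finiteness of solutions of $S$-unit equations over the normal closure $L$ of $K$, with $S$ a finite set of places depending \emph{only} on $K$ (and not on the individual order $\OO$), and then to translate that finiteness back into a bound on the number of rational monogenizations. Fix $K$ of degree $n$ with normal closure $L$ and $G={\rm Gal}(L/\Qq)$, and suppose an order $\OO$ of $K$ has several rational monogenizations; choose representatives $\alpha_1\kdots\alpha_m\in K$ with $\Zz_{\alpha_k}=\OO$, pairwise $\GL_2(\Zz)$-inequivalent, with primitive minimal polynomials $f_{\alpha_k}$. Since all $\alpha_k$ lie in the \emph{same} field $K$, their conjugates $\alpha_k^{(i)}=\sigma_i(\alpha_k)$ are indexed by the common embeddings $\sigma_1\kdots\sigma_n\colon K\hookrightarrow L$, and by \eqref{eq1.4} each $f_{\alpha_k}$ has discriminant $D(\OO)$. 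The non-integrality of the $\alpha_k$ is concentrated in their leading coefficients, equivalently in the behaviour at the point at infinity under $\GL_2(\Zz)$; the identity $\Zz_\alpha=\Zz[\alpha]\cap\Zz[\alpha^{-1}]$ makes the symmetry between $\alpha$ and $\alpha^{-1}$ (that is, $0\leftrightarrow\infty$) explicit, and is the reason one must work projectively, with $\PGL_2$ acting on $\Pp^1$, rather than affinely.

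The heart of the argument is a statement about cross-ratios. For two monogenizations $\alpha,\beta$ of $\OO$ and four distinct indices $i,j,k,l$, I form the cross-ratios $\cross$ of $\{\alpha^{(i)},\alpha^{(j)},\alpha^{(k)},\alpha^{(l)}\}$ and of $\{\beta^{(i)},\beta^{(j)},\beta^{(k)},\beta^{(l)}\}$, and claim their quotient is an $S$-unit of $L$ for the \emph{fixed} set $S=\{\text{archimedean places}\}\cup\{v:\, v\mid D_K\}$, where $D_K$ is the field discriminant. This is proved place by place for finite $v\notin S$: if $\OO$ is maximal at $v$, the conjugates are pairwise distinct and integral modulo $v$, so all differences are $v$-units and the quotient is a $v$-unit for free. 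The difficulty lies entirely at the primes $v$ dividing the conductor $[\OO_K:\OO]$ but not $D_K$, where the reductions of the roots collide and the individual differences $\alpha^{(i)}-\alpha^{(j)}$ are \emph{not} $v$-units; this is exactly the trouble caused by the primes dividing the discriminant of $f_{\alpha}$, whose prime factors vary with $\OO$. Here I would use that $(\Zz_\alpha)_v=(\Zz_\beta)_v$ forces the local binary forms to be $\GL_2(\Zz_v)$-equivalent, so that $\alpha^{(i)}$ and $\beta^{(i)}$ are related at $v$ by a M\"obius transformation over $\Zz_v$; since cross-ratios are exactly $\PGL_2$-invariant, they coincide at $v$ up to the finite anharmonic ambiguity coming from a possible permutation of roots. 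This uniform local control at the conductor primes is the main obstacle, since it is precisely what keeps $S$ bounded as $\OO$ ranges over all orders of $K$.

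Once $S$ is fixed, the finiteness input is standard. Dividing the Pl\"ucker identity
\[
(\alpha^{(i)}-\alpha^{(j)})(\alpha^{(k)}-\alpha^{(l)})+(\alpha^{(j)}-\alpha^{(k)})(\alpha^{(i)}-\alpha^{(l)})+(\alpha^{(k)}-\alpha^{(i)})(\alpha^{(j)}-\alpha^{(l)})=0
\]
by its first term exhibits $1$ as a sum of two cross-ratios; performing the same manipulation for $\beta$ and inserting the $S$-unit relations from the previous step yields, after the elimination carried out in \cite{BEG13}, genuine $S$-unit equations $x+y=1$ in $\OO_{L,S}^{*}$. By the finiteness of the solutions of such equations (which, as cited in the introduction, rests on Schmidt's Subspace Theorem), there are only finitely many solutions, depending only on $L$ and $S$, hence only on $K$. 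Consequently the tuple of cross-ratios of each $\beta$ relative to a fixed $\alpha$ takes only finitely many values.

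It remains to pass from cross-ratios back to $\GL_2(\Zz)$-classes and to extract the sharp thresholds. Two monogenizations with equal cross-ratios for all quadruples are projectively equivalent, and combined with integrality and $\Zz_\beta=\Zz_\alpha$ this forces $\GL_2(\Zz)$-equivalence up to finitely many choices, so the number of monogenizations is finite, recovering Birch--Merriman. For the sharp bounds I would argue that every \emph{nontrivial} solution of the $S$-unit equations imposes a nontrivial algebraic relation among the cross-ratios of a generator of $\OO$, forcing the conjugates into a special position realized only by orders of bounded index, hence by finitely many $\OO$. For $n\geq 5$ with ${\rm Gal}(L/\Qq)$ being $5$-transitive there are enough independent quadruples available — four moving roots together with the distinguished point at infinity, which is why one needs $5$-transitivity rather than the $4$-transitivity of Theorem~\ref{thmB} — to force $\beta\sim\alpha$ for all $\OO$ outside the finite exceptional set, giving at most one rational monogenization. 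For quartic $K$ only the four finite roots are available, one short of the count needed, so a single extra projective class can survive; this is exactly the class produced by the construction of Theorem~\ref{thmC}, which yields the threshold \emph{two} and shows that it is best possible. Throughout, the decisive step is the uniform control of the conductor primes in the second paragraph; granting that, the remaining steps follow the template of \cite{BEG13}.
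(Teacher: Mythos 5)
Your overall strategy (cross ratios, unit equations, then back to equivalence classes) is the right one, but three load-bearing steps are either wrong or missing. First, your claim that $(\Zz_\alpha)_v=(\Zz_\beta)_v$ ``forces the local binary forms to be $\GL_2(\Zz_v)$-equivalent'' is false, and cannot be repaired: since a matrix $C$ with $\beta=C\alpha$ is unique up to scalar once $[K:\Qq]\geq 3$, local $\GL_2(\Zz_v)$-equivalence at even one place would force $\GL_2(\Qq)$-equivalence and hence, by Lemma \ref{lem2.4}, $\GL_2(\Zz)$-equivalence — so no order would have two rational monogenizations, contradicting Theorem \ref{thmC}. The correct statement is both cleaner and stronger: by Lemma \ref{lem2.1}, $[\alpha^{(i)}-\alpha^{(j)}]=[1,\alpha^{(i)}]\cdot[1,\alpha^{(j)}]\cdot\fd_{ij}(A_\alpha)$, so the ideal generated by $\cross_{ijkl}(\alpha)$ depends only on the order $\Zz_\alpha$; hence $\cross_{ijkl}(\beta)/\cross_{ijkl}(\alpha)$ is a genuine unit of $\OO_L$ (Lemma \ref{lem2.2}), with no case distinction at conductor primes and no need for a set $S$ at all. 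Second, your derivation of the thresholds is a plausibility story, not an argument. The equation \eqref{eq3.3} has \emph{coefficients} $\cross_{ijkl}(\alpha),\cross_{ilkj}(\alpha)$ that vary with the order, so Lang-type finiteness for a fixed equation gives only Birch--Merriman (finitely many monogenizations of each \emph{fixed} order), as you note. What pins down the coefficients themselves is: in the quartic case, the theorem of \cite{EGST88} (Lemma \ref{lem3.2}) that only finitely many pairs $(a,b)$ with $a+b=1$ admit more than two unit solutions — three rational monogenizations supply three solutions; in the degree $\geq 5$ case, the identity \eqref{eq3.8} among six unit quotients $\ve_{ijkl}$ with indices from a quintuple, attacked via \cite[Prop.~8.1]{BEG13} (Lemma \ref{lem3.3}) and a conjugacy analysis that is exactly where $5$-transitivity enters (all $\ve_{ijkl}$ with indices from quintuples must be conjugate so that the various degenerate cases collapse). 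Your ``four moving roots plus the point at infinity'' count does not produce either of these inputs.

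Third, even granting finitely many $\GL_2(\Qq)$-classes of special numbers, the theorem needs finitely many $\GL_2(\Zz)$-classes, and a single $\GL_2(\Qq)$-class typically contains infinitely many $\GL_2(\Zz)$-classes generating pairwise distinct orders (e.g.\ $\alpha$ and $q\alpha/p$, as exploited in the proof of Theorem \ref{thm1.3}). Lemma \ref{lem2.4} only covers the case where the two numbers generate the \emph{same} order. The missing step is Proposition \ref{prop5.1}, whose proof requires the quantitative local estimate of Proposition \ref{prop4.1}: if $(\alpha,\beta)$ and $(\alpha^*,\beta^*)$ are $\GL_2(\Qq)$-equivalent special pairs with $\alpha^*=C\alpha$, then $\det C$ divides a fixed ideal built from the discriminant of $\Zz_\alpha$ and the ideal generated by the $\cross_{ijkl}(\beta)/\cross_{ijkl}(\alpha)-1$. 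This occupies all of Section \ref{section4} and is nowhere reflected in your proposal; without it the argument does not close.
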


Theorem \ref{thmC} implies that there are quartic number fields, having infinitely many 
orders with two rational monogenizations.
We do not know whether the condition on the normal closure of $K$ is necessary
if $[K:\Qq ]\geq 5$. Probably, trying to remove or relax this condition
would considerably complicate the proof.

The proof of Theorem \ref{thm1.1} uses among other things finiteness results for unit equations in more than two unknowns. The present proofs of these depend on ineffective methods 
from Diophantine approximation, e.g., Schmidt's Subspace Theorem or the Faltings-R\'{e}mond method.
As a consequence, our proof of
Theorem \ref{thm1.1} is ineffective in that it does not allow to determine
the exceptional orders. Further, although for unit equations we have good upper bounds for the 
number of solutions, it is because of the `other things,' that we cannot give an upper bound for the
number of exceptional orders.

We state a consequence, which partly confirms Conjecture 4.2 in \cite{BEGRS22}.
We adopt the terminology of \cite{BEGRS22}.
Given a number field $K$, denote by $\PP\II (K)$ the set of primitive, irreducible polynomials
$f\in\Zz [X]$, such that there is $\alpha$ with $f(\alpha )=0$ and
$\Qq (\alpha )=K$. We call two polynomials $f,g\in\PP\II (K)$ $\GL_2(\Zz )$-equivalent if 
there is $\big(\begin{smallmatrix}a&b\\c&d\end{smallmatrix}\big)\in\GL_2(\Zz )$ such that $g(X)=\pm (cX+d)^{\deg f}f\big(\frac{aX+b}{cX+d}\big)$. Further, $f$ and $g$ are called
\emph{Hermite equivalent} if there are $\alpha ,\beta$ such that $\Qq (\alpha )=\Qq (\beta ) =K$, $f(\alpha)=0$, $g(\beta)=0$ and 
$\MM_{\beta}=\lambda\MM_{\alpha}$ for some $\lambda\in K^*$ (see \eqref{eq1.1} above).
It was shown in \cite{BEGRS22} that two $\GL_2(\Zz )$-equivalent polynomials are Hermite equivalent.
As we will show, Theorem \ref{thm1.1} implies the following,
which except for the assumption on the normal closure of $K$ is Conjecture 4.2 of \cite{BEGRS22}. 

\begin{theorem}\label{thm1.2}
(i) Let $K$ be a quartic number field. 
Then there are only finitely many Hermite equivalence classes in $\PP\II (K)$ that 
fall apart into more than two $\GL_2(\Zz )$-equivalence classes.
\\[0.15cm]
(ii) Let $K$ be a number field of degree $\geq 5$, 
such that the Galois group of its normal closure  is $5$-transitive.
Then there are only finitely many Hermite equivalence classes in $\PP\II (K)$ 
that fall apart into more than one $\GL_2(\Zz )$-equivalence class.
\end{theorem}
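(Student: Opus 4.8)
The plan is to deduce Theorem \ref{thm1.2} directly from Theorem \ref{thm1.1} by translating the statement about Hermite equivalence classes of polynomials in $\PP\II (K)$ into a statement about rational monogenizations of orders of $K$. The bridge between the two languages is the map sending $\alpha$ to its attached order $\Zz_{\alpha}$, together with the module-theoretic interpretation of Hermite equivalence recorded in the excerpt. First I would set up the dictionary. Each $f\in\PP\II (K)$ is the primitive minimal polynomial $f_{\alpha}$ of some $\alpha$ with $\Qq (\alpha )=K$, and we may attach to $f$ the order $\Zz_{\alpha}$; I would check this is well defined, i.e.\ independent of the choice of root $\alpha$, which follows because all roots generating $K$ give Galois-conjugate modules $\MM_{\alpha}$ and hence the same order up to the canonical identification of $K$ with $\Qq(\alpha)$.

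The key step is to match the two equivalence relations. On the one hand, the excerpt states that two $\GL_2(\Zz )$-equivalent polynomials are Hermite equivalent, and that $\GL_2(\Zz )$-equivalence of $\alpha,\beta$ forces $\Zz_{\alpha}=\Zz_{\beta}$; so each Hermite class maps into a single order and refines into $\GL_2(\Zz )$-classes exactly as rational monogenizations refine. On the other hand, I would argue that the Hermite equivalence class of $f=f_{\alpha}$ corresponds precisely to the order $\Zz_{\alpha}$: if $g=f_{\beta}$ is Hermite equivalent to $f$, so that $\MM_{\beta}=\lambda\MM_{\alpha}$ for some $\lambda\in K^{*}$, then scaling a module by $\lambda\in K^{*}$ does not change its ring of scalars, whence $\Zz_{\beta}=\Zz_{\alpha}$; conversely, the characterization \eqref{eq1.3}--\eqref{eq1.4} of $\Zz_{\alpha}$ together with the Birch--Merriman finiteness (an order has finitely many rational monogenizations) lets me identify a Hermite class with the fiber $\{f_{\alpha}:\Zz_{\alpha}=\OO\}$ modulo scaling. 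Thus ``Hermite equivalence classes in $\PP\II (K)$'' are in bijection with rationally monogenic orders $\OO$ of $K$, and the number of $\GL_2(\Zz )$-classes into which a Hermite class splits equals the number of rational monogenizations of the corresponding order.

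With this dictionary in place the theorem is immediate: in part (i) a Hermite class falls apart into more than two $\GL_2(\Zz )$-classes exactly when the associated order has more than two rational monogenizations, and Theorem \ref{thm1.1}(i) says there are only finitely many such quartic orders; part (ii) follows identically from Theorem \ref{thm1.1}(ii), with ``two'' replaced by ``one'' under the $5$-transitivity hypothesis. The main obstacle I anticipate is not any deep new input but the careful verification that the correspondence between Hermite classes and orders is a genuine bijection rather than merely a map in one direction. In particular I would need to confirm that the scaling relation $\MM_{\beta}=\lambda\MM_{\alpha}$ characterizing Hermite equivalence captures \emph{all} the coincidences $\Zz_{\beta}=\Zz_{\alpha}$ coming from a fixed order, i.e.\ that two primitive irreducible polynomials with the same invariant order are automatically Hermite equivalent; establishing this equality of the two notions of ``same order'' is where the module-scaling argument and the explicit basis \eqref{eq1.3} must be used with some care, and it is the only place where one must rule out spurious extra refinement that would break the count.
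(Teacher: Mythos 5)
Your reduction to Theorem \ref{thm1.1} is exactly the paper's route, and the forward half of your dictionary is all that is actually used: Hermite equivalence of $f_{\alpha},f_{\beta}$ gives $\MM_{\beta}=\lambda\MM_{\alpha}$, hence $\Zz_{\beta}=\Zz_{\alpha}$, and $\GL_2(\Zz)$-inequivalence of the polynomials forces $\GL_2(\Zz)$-inequivalence of the roots. The genuine problem is the converse that you flag at the end and on which your ``bijection'' rests: it is \emph{not} true in general that two polynomials in $\PP\II (K)$ with the same invariant order are Hermite equivalent. Having the same ring of scalars $\OO$ does not make the modules $\MM_{\alpha}$ and $\MM_{\beta}$ homothetic --- they may lie in different classes of $\OO$-modules with that ring of scalars --- so a single order can carry several distinct Hermite classes. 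Consequently your ``exactly when'' and the induced bijection between Hermite classes and rationally monogenic orders fail, and no amount of care with the basis \eqref{eq1.3} will rescue them.

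Fortunately the theorem does not need injectivity, only finite-to-one-ness, and this is precisely how the paper argues. The forward implication shows that a Hermite class splitting into more than two (resp.\ one) $\GL_2(\Zz)$-classes produces an order with more than two (resp.\ one) rational monogenizations, so Theorem \ref{thm1.1} leaves only finitely many candidate orders $\OO$ (finitely many even accounting for the fact that a choice of root only pins $\OO$ down inside one of the finitely many conjugates of $K$; your remark that conjugate roots give conjugate --- not equal --- orders is the right way to handle this). Then, by the Birch--Merriman finiteness you already cite, for each such $\OO$ the set of $\alpha$ with $\Zz_{\alpha}=\OO$ fills only finitely many $\GL_2(\Zz)$-equivalence classes; since $\GL_2(\Zz)$-equivalent polynomials are Hermite equivalent, every Hermite class is a union of some of these finitely many $\GL_2(\Zz)$-classes, so only finitely many Hermite classes can be associated with $\OO$. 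Replacing your bijection by this counting argument closes the gap and recovers the paper's proof.
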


Another consequence of our investigations, which probably could be proved by other
means as well, is the following.

\begin{theorem}\label{thm1.3}
Let $K$ be a number field of degree $\geq 3$.
Then $K$ has infinitely many orders that are rationally monogenic but not monogenic.
\end{theorem}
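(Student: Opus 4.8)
The plan is to exhibit, for every number field $K$ of degree $n\geq 3$, infinitely many pairwise distinct rationally monogenic orders $\Zz_{\alpha}$ that fail to be monogenic. The key observation is the criterion \eqref{eq1.4}: the discriminant $D(\Zz_{\alpha})$ equals the discriminant $D(f_{\alpha})$ of the primitive minimal polynomial $f_{\alpha}=a_0X^n+\cdots +a_n$, together with the formula $D(f_{\alpha})=a_0^{2n-2}\prod_{i<j}(\alpha^{(i)}-\alpha^{(j)})^2$. Since an order is monogenic only if it equals $\Zz[\gamma]$ for an algebraic \emph{integer} $\gamma$, and since $\Zz[\gamma]$ always has index in $\OO_K$ coprime to the leading coefficient phenomenon attached to non-integral generators, I would engineer $\alpha$ so that the extra factor $a_0^{2n-2}$ with $a_0>1$ forces $\Zz_{\alpha}$ to be a non-monogenic order.

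First I would fix a generator $\theta$ of $K$ that is an algebraic integer with $\Qq(\theta)=K$, and consider the family $\alpha_p:=\theta/p$ for primes $p$ chosen so that $f_{\alpha_p}$ is primitive with leading coefficient $a_0=p^{n}$ (after clearing denominators from the minimal polynomial of $\theta/p$). Explicitly, if $\theta$ has minimal polynomial $X^n+c_1X^{n-1}+\cdots+c_n\in\Zz[X]$, then $\theta/p$ satisfies $p^nX^n+c_1p^{n-1}X^{n-1}+\cdots+c_n$, and for all but finitely many $p$ this polynomial is primitive (one needs $p\nmid\gcd$ of the coefficients, which holds once $p\nmid c_n$). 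By \eqref{eq1.4}, $D(\Zz_{\alpha_p})=p^{2n(n-1)}\prod_{i<j}(\theta^{(i)}/p-\theta^{(j)}/p)^2\cdot$(appropriate power), so these discriminants grow with $p$ and are pairwise distinct for distinct $p$; hence the orders $\Zz_{\alpha_p}$ are pairwise distinct. This produces infinitely many rationally monogenic orders.

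The main step is then to show that infinitely many of these $\Zz_{\alpha_p}$ are \emph{not} monogenic. Here I would argue via the index: using $\Zz_{\alpha}=\Zz[\alpha]\cap\Zz[\alpha^{-1}]$ and the basis \eqref{eq1.3}, the conductor of $\Zz_{\alpha_p}$ in $\OO_K$ becomes divisible by $p$, so $\Zz_{\alpha_p}$ is a non-maximal order whose index $[\OO_K:\Zz_{\alpha_p}]$ is divisible by a large power of $p$. A monogenic order $\Zz[\gamma]$ of $K$ with the same discriminant would require an algebraic integer $\gamma$ whose discriminant $D(\gamma)=[\OO_K:\Zz[\gamma]]^2\,d_K$ matches $D(\Zz_{\alpha_p})$; by Gy\H ory's finiteness theorem for monogenizations (the effective result cited before Theorem~\ref{thmA}), only finitely many $\Zz$-equivalence classes of integral generators can produce orders of any bounded discriminant, and more to the point, the monogenicity of $\Zz_{\alpha_p}$ would force the existence of $\gamma\in\Zz_{\alpha_p}$ with $\Zz[\gamma]=\Zz_{\alpha_p}$, contradicting the primitivity/index constraints for suitably chosen $p$.

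The hard part will be pinning down a clean, self-contained obstruction to monogenicity for the constructed family, rather than merely showing the orders are non-maximal (non-maximal orders can still be monogenic). The cleanest route is probably an index argument at the prime $p$: I would show that in $\Zz_{\alpha_p}$ the localization at $p$ has a specific non-monogenic structure (for instance, $p$ splits or ramifies in $\OO_K$ in a way that, combined with the large $p$-power index, prevents a single element from generating the local order $\Zz_{\alpha_p}\otimes\Zz_{(p)}$ over $\Zz_{(p)}$). By a standard local criterion, an order is monogenic only if each of its $p$-adic completions is monogenic, and one can choose $\theta$ and $p$ so that this local obstruction is visibly present for infinitely many $p$. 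Assembling these local failures gives infinitely many rationally monogenic, non-monogenic orders, completing the proof.
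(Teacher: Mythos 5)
Your construction of the infinite family is essentially the paper's: the paper uses $q\alpha/p$ for pairs of large primes $p,q$, you use $\theta/p$, and in both cases pairwise distinctness of the orders follows from the discriminant formula \eqref{eq1.4}. That part is fine.

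The genuine gap is the non-monogenicity step, which you yourself flag as "the hard part" and then do not carry out. None of the three strategies you sketch closes it. The conductor/index observation only shows the orders are non-maximal, which, as you note, proves nothing. The appeal to Gy\H{o}ry's finiteness theorem is misdirected: that theorem bounds the number of monogenizations of a \emph{fixed} order, and says nothing about whether a given order in your family admits even one integral generator; the claimed "contradiction with primitivity/index constraints" is asserted, not derived. The local criterion is the most plausible route, but you give no reason why $\Zz_{\alpha_p}\otimes\Zz_{(p)}$ should fail to be monogenic over $\Zz_{(p)}$ -- for a general $\theta$ and general $p$ there is no visible obstruction, and you would need to both choose $\theta$ and $p$ carefully and actually verify the local failure, which is a substantive piece of work you have not done.

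The paper's mechanism is quite different and worth contrasting. It does not try to rule out integral generators by local or index arguments at all. Instead: by Thue's theorem the equation $F_{\alpha}(x,y)=\pm 1$ has only finitely many solutions, so for $p,q$ large any algebraic integer $\beta$ with $\Zz[\beta]=\Zz_{q\alpha/p}$ cannot be $\GL_2(\Zz)$-equivalent to $q\alpha/p$ (equivalence would force $F_{\alpha}(pd,-qc)=\pm 1$ with $|pd|$ or $|qc|$ too large). Hence any monogenic member of the family has \emph{two} rational monogenizations, i.e.\ $q\alpha/p$ is special; since all these numbers lie in a single $\GL_2(\Qq)$-class, Proposition \ref{prop5.1} (which rests on Lang's unit-equation theorem) caps the number of such pairs $(p,q)$ at a finite set. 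Your proposal contains neither the Thue input nor the reduction to Proposition \ref{prop5.1}, and without some replacement for them the proof is incomplete.
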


Finally, we would like to comment on the connection between the orders $\Zz_{\alpha}$
defined above, and invariant orders of binary forms.
Birch and Merriman \cite{BM72} introduced for a binary form
\[
F(X,Y) =a_0X^n+a_1X^{n-1}Y+\cdots +a_nY^n\in\Zz [X,Y]
\]
that is irreducible over $\Qq$ the $\Zz$-module $\Zz_F$ with $\Zz$-basis $1,\omega_1\kdots\omega_{n-1}$ given by \eqref{eq1.3},
where $F(\alpha ,1)=0$.
Nakagawa \cite{N89} proved that $\Zz_F$ is an order of the number field 
$\Qq (\alpha )$, in fact,
\begin{equation}\label{eq1.6}
\omega_i\omega_j=-\sum_{\max (i+j-n,1)\leq k\leq i} a_{i+j-k}\omega_k+
\sum_{j<k\leq \min (i+j,n)} a_{i+j-k}\omega_k
\end{equation}
for $i,j=1\kdots n-1$, where $\omega_n:=-a_n$.
Thus, $\Zz_F$ is called the invariant ring or order of $F$.
This order was further studied by Simon \cite{S01,S03} and
Del Corso, Dvornicich and Simon \cite{DDS05}.

Notice that in the definition of $\Zz_F$ we did not require that the coefficients of $F$ have greatest common divisor $1$.
Our order $\Zz_{\alpha}$ is just $\Zz_F$ where $F(X,Y)=Y^{\deg \alpha}f_{\alpha}(X/Y)$
is an irreducible binary form
whose coefficients have greatest common divisor $1$.

More generally, 
given any commutative ring $R$ and binary form $F=\sum_{i=0}^n a_iX^{n-i}Y^i \in R[X,Y]$,
one can formally define the invariant ring $R_F$ of $F$ by taking the free $R$-module
with basis $1,\omega_1\kdots \omega_{n-1}$ with prescribed multiplication table \eqref{eq1.6}.
Here, it is no longer required
that $F$ is irreducible, nor even that $a_0\not=0$, and even $a_0=\cdots =a_n=0$ is allowed.
Wood \cite{W11} studied invariant rings of binary forms in a
much broader context.

The remainder of our paper is organized as follows. 
In Section \ref{section2} we have collected some basic properties
of rationally monogenic orders. Although these are all known, we have provided
proofs for convenience of the reader.
Sections \ref{section3} and \ref{section4} contain preparations,
where in Section \ref{section3} we apply finiteness results for unit equations.
In Section \ref{section5} we finish the proofs of Theorems \ref{thm1.1}--\ref{thm1.3}.
Finally, in Section \ref{section6} 
we generalize the orders $\Zz_{\alpha}$ to domains $\OO_{S,{\alpha}}$,
where $\OO_S$ is the ring of $S$-integers of a number field $\vk$
and $\alpha$ is algebraic over $\vk$,
and state and prove
a generalization of Theorem \ref{thm1.1} but with a notion of equivalence that is slightly weaker
than $\GL_2(\OO_S)$-equivalence.

\section{Lemmas over principal ideal domains}\label{section2}

In this section, we have collected some generalities on rationally monogenic orders. We state and prove
everything over an arbitrary principal ideal domain $A$ of characteristic $0$.
Most of the results in this section have been proved elsewhere in a more general context,
see for instance \cite[Chaps. 16, 17]{EG17}, \cite{BEG04}, \cite{DDS05}.
For convenience of the reader we have repeated the short proofs, specialized to the
situation of this paper.
In the proofs of Theorems \ref{thm1.1}--\ref{thm1.3} we apply the results of the present section with $A=\Zz$. In Section \ref{section6}
we use a local-to-global argument, and apply the results of the present section
to localizations of $\OO_S$.

In what follows, if $F$ is any field, $\xi\in \Pp^1(F):=F\cup \{ \infty\}$ and 
$C=\big(\begin{smallmatrix}a&b\\c&d\end{smallmatrix}\big)\in\GL_2(F)$, we write $C\xi :=\frac{a\xi +b}{c\xi +d}$, with the conventions that this
is $\infty$ if $\xi =\infty$ and $c=0$; $a/c$ if $\xi =\infty$ and $c\not= 0$;
$\infty$ if $c\not= 0$ and $\xi = -d/c$.

Let $A$ be a principal ideal domain of characteristic $0$, and $\vk$ its field of fractions.
Fix a finite extension $K$ of $\vk$ of degree $n\geq 3$. Let $L$ be its normal
closure over $\vk$ and $x\mapsto x^{(i)}$ ($i=1\kdots n$) the $\vk$-isomorphic embeddings
of $K$ in $L$. Further, denote by $A_K$, $A_L$ the integral closures of $A$ in $K$ and $L$,
respectively. Recall that both $A_K$, $A_L$ are Dedekind domains; in the case that $A=\Zz$,
$A_K$ and $A_L$ are just the rings of integers of $K$ and $L$.

Given any domain $B\supseteq A$, we call $\alpha ,\beta\in K$ $\GL_2(B)$-equivalent 
if there is $C\in\GL_2(B)$ 
such that $\beta=C\alpha$.

Let $\alpha\in K$ with $K=\vk (\alpha )$. Define the free $A$-module
\begin{equation}\label{eq2.1}
\MM_{\alpha}:=\big\{ x_0+x_1\alpha +\cdots +x_{n-1}\alpha^{n-1}:\, x_0\kdots x_{n-1}\in A\}
\end{equation}
and its ring of scalars
\begin{equation}\label{eq2.2}
A_{\alpha}:=\{ \xi\in K:\, \xi\MM_{\alpha}\subseteq\MM_{\alpha}\}.
\end{equation}
As one easily verifies, if $\alpha ,\beta$ are two $\GL_2(A)$-equivalent elements of $A$,
then $\MM_{\alpha}=\lambda\MM_{\beta}$ for some $\lambda\in K^*$, and thus, $A_{\alpha}=A_{\beta}$.

We give some other descriptions of $A_{\alpha}$.
Let $f_{\alpha}=a_0X^n+\cdots +a_n\in A[X]$
be a primitive minimal polynomial of $\alpha$, i.e., with $\gcd (a_0\kdots a_n)=1$.
Such a polynomial exists since $A$ is a principal ideal domain. 

\begin{lemma}\label{lem2.0}
We have
\begin{equation}\label{eq2.3}
A_{\alpha}=\big\{ x_0+x_1\omega_1+\cdots +x_{n-1}\omega_{n-1}:\, x_0\kdots x_{n-1}\in A\big\} 
\end{equation}
where 
\[
\omega_i:=a_0\alpha^i+a_1\alpha^{i-1}+\cdots +a_{i-1}\alpha\ \ (i=1\kdots n-1),
\]
and
\begin{equation}\label{eq2.5}
A_{\alpha}=A[\alpha ]\cap A[\alpha^{-1}].
\end{equation}
\end{lemma}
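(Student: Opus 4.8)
The plan is to prove the two descriptions by a short chain of inclusions, settling \eqref{eq2.3} first and deducing \eqref{eq2.5} from it by a local argument. Write $B:=A+A\omega_1+\cdots+A\omega_{n-1}$ for the candidate module in \eqref{eq2.3}. Two reductions will be used throughout. First, since $1\in\MM_\alpha$, every $\xi\in A_\alpha$ satisfies $\xi=\xi\cdot 1\in\MM_\alpha$, so $A_\alpha\subseteq\MM_\alpha\subseteq A[\alpha]$. Second, from $\MM_{\alpha^{-1}}=\langle 1,\alpha^{-1}\kdots\alpha^{-(n-1)}\rangle=\alpha^{1-n}\MM_\alpha$ one gets $A_{\alpha^{-1}}=A_\alpha$, because the condition $\xi\MM\subseteq\MM$ is unchanged upon scaling $\MM$. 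Applying the first reduction to $\alpha^{-1}$ then gives $A_\alpha=A_{\alpha^{-1}}\subseteq A[\alpha^{-1}]$, so already $A_\alpha\subseteq A[\alpha]\cap A[\alpha^{-1}]$.

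For \eqref{eq2.3} I first show $B\subseteq A_\alpha$. Setting $\omega_n:=-a_n$ (which equals $a_0\alpha^n+\cdots+a_{n-1}\alpha$ by $f_\alpha(\alpha)=0$), comparison of coefficients gives the recursion $\alpha\omega_i=\omega_{i+1}-a_i\alpha$ for $1\le i\le n-1$. Since each $\omega_i\ (1\le i\le n)$ and $\alpha$ lie in $\MM_\alpha$, an induction on $j$ using this recursion shows $\alpha^j\omega_i\in\MM_\alpha$ for $0\le j\le n-1$, i.e. $\omega_i\MM_\alpha\subseteq\MM_\alpha$; as $A_\alpha$ is an $A$-module containing $1$ and each $\omega_i$, this yields $B\subseteq A_\alpha$. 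For the reverse inclusion take $\xi\in A_\alpha$ and write $\xi=\sum_{k=0}^{n-1}c_k\alpha^k$ with $c_k\in A$. Computing $\xi\alpha\in\MM_\alpha$ and reducing the single term $\alpha^n$ via $f_\alpha$, integrality of every coordinate forces $a_0\mid c_{n-1}a_j$ for all $j$; since $\gcd(a_0\kdots a_n)=1$ this gives $a_0\mid c_{n-1}$, so $d_{n-1}:=c_{n-1}/a_0\in A$ and $\xi-d_{n-1}\omega_{n-1}\in A_\alpha$ has $\alpha$-degree $\le n-2$. Iterating (at the $l$-th step testing $\xi^{(l)}\alpha^{l+1}\in\MM_\alpha$, whose only reduction is of a single $\alpha^n$) expresses $\xi=\sum_i d_i\omega_i$ with all $d_i\in A$. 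This proves \eqref{eq2.3}.

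For \eqref{eq2.5} it remains, given $A_\alpha=B\subseteq A[\alpha]\cap A[\alpha^{-1}]$, to prove $A[\alpha]\cap A[\alpha^{-1}]\subseteq A_\alpha$. For $\xi$ in the intersection and $0\le j\le n-1$, writing $\alpha^j=\alpha^{n-1}\cdot\alpha^{-(n-1-j)}$ shows $\xi\alpha^j\in A[\alpha]\cap\alpha^{n-1}A[\alpha^{-1}]$, so the statement reduces to the identity
\[
A[\alpha]\cap\alpha^{n-1}A[\alpha^{-1}]=\MM_\alpha .
\]
Both sides are $A$-lattices (elements of the left side have coordinates with denominators dividing a fixed power of $\gcd(a_0,a_n)$), so the identity may be checked prime by prime, and the inclusion $\supseteq$ is clear. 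At a prime $\fp\nmid a_0$ the element $\alpha$ is integral at $\fp$, hence $A[\alpha]_\fp=\MM_{\alpha,\fp}$, and since $\MM_\alpha\subseteq\alpha^{n-1}A[\alpha^{-1}]$ the intersection is $\MM_{\alpha,\fp}$; the case $\fp\nmid a_n$ is symmetric, using $\alpha^{n-1}A[\alpha^{-1}]_\fp=\MM_{\alpha,\fp}$.

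The main obstacle is the remaining primes $\fp\mid\gcd(a_0,a_n)$ — exactly those where both $\alpha$ and $\alpha^{-1}$ fail to be integral, and which divide the discriminant of $f_\alpha$. There neither $A[\alpha]_\fp$ nor $A[\alpha^{-1}]_\fp$ collapses to $\MM_\alpha$, and a general element carries denominators that are powers of $\fp$. A useful preliminary is that $A[\alpha]\cap A[\alpha^{-1}]\subseteq A_K$: at any place $\fq$ of $K$ above $\fp$ either $\alpha$ or $\alpha^{-1}$ is integral at $\fq$, and membership in the corresponding ring forces $\xi$ to be integral at $\fq$, so every element of the intersection is integral. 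To finish one passes to the completion at $\fp$ and factors $f_\alpha$ according to its Newton polygon into parts whose roots have positive, zero and negative valuation; tracking coordinates across the induced decomposition of $K\otimes_\vk\vk_\fp$, and using $\fp$-primitivity of $f_\alpha$ (which guarantees a coefficient of valuation $0$), pins the denominators down and yields $\xi\in\MM_{\alpha,\fp}$. This local bookkeeping at the bad primes is the only delicate point; everything else is formal.
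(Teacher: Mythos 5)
Your proof of \eqref{eq2.3} is correct and essentially self-contained: the recursion $\alpha\omega_i=\omega_{i+1}-a_i\alpha$ gives $B\subseteq A_\alpha$, and the descending induction extracting $a_0\mid c_{n-1}$ from $\xi\alpha\in\MM_\alpha$ and primitivity of $f_\alpha$ gives $A_\alpha\subseteq B$. This differs mildly from the paper, which obtains $A_\alpha\subseteq\NN_\alpha$ by routing through $A[\alpha]\cap A[\alpha^{-1}]$, but your direct argument is fine. The inclusion $A_\alpha\subseteq A[\alpha]\cap A[\alpha^{-1}]$ is also fine.

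The gap is in the reverse inclusion $A[\alpha]\cap A[\alpha^{-1}]\subseteq A_\alpha$, which is the whole content of \eqref{eq2.5}. Your reduction to the lattice identity $A[\alpha]\cap\alpha^{n-1}A[\alpha^{-1}]=\MM_\alpha$ is legitimate, and the localizations at primes $\fp\nmid a_0$ or $\fp\nmid a_n$ are handled correctly; but at the primes $\fp\mid\gcd(a_0,a_n)$ you only assert that a Newton-polygon decomposition of $K\otimes_\vk\vk_\fp$ "pins the denominators down." That is precisely the non-trivial case (it is the only case in which \eqref{eq2.5} says anything beyond the easy good-prime analysis), and no argument is given: your preliminary observation only yields $\xi\in A_K$ at such primes, and integrality is far weaker than membership in the specific lattice $\MM_{\fp,\alpha}$, since $A_K$ typically strictly contains $A_\alpha$. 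Tracking coordinates through the factorization $f_\alpha=f_+f_0f_-$ over $\vk_\fp$ does not obviously bound the coordinates of $P(\alpha)=Q(\alpha^{-1})$ in the basis $1,\alpha\kdots\alpha^{n-1}$; some genuinely new input is needed. The paper closes exactly this point globally and in one stroke: for $\xi=P(\alpha)=Q(\alpha^{-1})$ with $\deg P=r\ge 1$, the nonzero polynomial $H(X)=X^{\deg Q}P(X)-X^{\deg Q}Q(X^{-1})\in A[X]$ has the same leading coefficient $b$ as $P$ and vanishes at $\alpha$, so $f_\alpha\mid H$ in $\vk[X]$ and hence, by Gauss' Lemma and primitivity of $f_\alpha$, in $A[X]$; therefore $a_0\mid b$, and one subtracts $(b/a_0)X^{r-n}f_\alpha$ or $(b/a_0)\omega_r$ and inducts on $r$. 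You should either supply the local computation at the bad primes in full or replace the local argument by this divisibility-and-induction step.
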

Identity \eqref{eq2.3} follows from  
\cite[p. 365, Thm. 16.2.9, formula (16.2.7)]{EG17}), while \eqref{eq2.5}
is a consequence of \cite[Prop. 2]{DDS05}. For convenience of the reader, we repeat the proofs.

\begin{proof}
Let $\NN_{\alpha}$ denote the $A$-module on the right-hand side of \eqref{eq2.3}. We prove the inclusions
$\NN_{\alpha}\subseteq A_{\alpha}\subseteq A[\alpha ]\cap A[\alpha^{-1}]\subseteq\NN_{\alpha}$.

First observe that if $1\leq i\leq n-1$, $0\leq j\leq n-1$, then
\begin{align*}
\omega_i\alpha^j &=
\sum_{k=0}^{i-1} a_k\alpha^{i+j-k}\in\MM_{\alpha}\ \ \text{if } i+j\leq n-1,
\\
\omega_i\alpha^j &= (\omega_i -f_{\alpha}(\alpha))\alpha^j=
-\sum_{k=i}^{n} a_k\alpha^{i+j-k}\in\MM_{\alpha}\ \ \text{if } i+j\geq n,
\end{align*}
implying $\NN_{\alpha}\subseteq A_{\alpha}$.

Second,  $A_{\alpha}\subseteq \MM_{\alpha}\cap\alpha^{1-n}\MM_{\alpha}\subseteq A[\alpha ]\cap A[\alpha^{-1}]$.

Third, let $\xi =P(\alpha )=Q(\alpha^{-1})\in A[\alpha ]\cap A[\alpha^{-1}]$, where $P,Q\in A[X]$.
We prove by induction on $\deg P$, that $\xi\in\NN_{\alpha}$. For $\deg P=0$ this is clear.
Let $\deg P=r\geq 1$. Consider the polynomial $H(X):=X^{\deg Q}P(X)-X^{\deg Q}Q(X^{-1})\in A[X]$. 
The polynomial $H$ is non-zero, since otherwise $P(X)=Q(X^{-1})$, which is impossible.
Let $b$ be the leading coefficient of $P$. Then $b$ is also the leading coefficient of $H$.
Since $H(\alpha )=0$, $f_{\alpha}$ must divide $H$ in $\vk [X]$.
But by assumption, the coefficients of $f_{\alpha}$ have gcd $1$, so by Gauss' Lemma 
$f_{\alpha}$ divides $H$ in $A[X]$, in particular, the leading coefficient $a_0$ of $f_{\alpha}$
divides $b$. Now if $r\geq n$, we have $P(\alpha )=P^*(\alpha )$
where $P^*(X)=P(X)-(b/a_0)X^{r-n}f_{\alpha}(X)$ is a polynomial in $A[X]$
of degree $<r$ and we can apply the induction hypothesis. 
If $r<n$, then $P(\alpha )=(b/a_0)\omega_r+P^*(\alpha )$, where $P^*\in A[X]$ has degree $<r$. We know already that $\omega_r\in A[\alpha ]\cap A[\alpha^{-1}]$,
so $P^*(\alpha )\in A[\alpha ]\cap A[\alpha^{-1}]$. We can again apply the induction hypothesis.
\end{proof}

Let $\MM$ be an $A$-submodule of $A_K$ with basis $\gamma_1\kdots \gamma_n$, say,
where $n=[K:\vk ]$.
The discriminant ideal $\fd_{\MM /A}$ of $\MM$ over $A$ is defined as the ideal of $A$ generated
by $D_{K/\vk}(\gamma_1\kdots\gamma_n):=\Big(\det (\gamma_i^{(j)})_{i,j=1\kdots n}\Big)^2$.
This does not depend on the choice of basis.

\begin{lemma}\label{lem2.-1}
Let $\alpha\in K$ with $\vk (\alpha )=K$ and let $f_{\alpha}=a_0X^n+\cdots +a_n\in A[X]$
be a primitive minimal polynomial of $\alpha$. Then $\fd_{A_{\alpha }/A}=D(f_{\alpha})A$,
where 
 $D(f_{\alpha})=a_0^{2n-2}\prod_{1\leq i<j\leq n}(\alpha^{(i)}-\alpha^{(j)})^2$.
\end{lemma}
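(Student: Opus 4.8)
The plan is to extract an explicit $A$-basis of $A_{\alpha}$ from Lemma \ref{lem2.0} and then compute the discriminant of that basis by comparing it against the power basis $1,\alpha\kdots\alpha^{n-1}$ of $\MM_{\alpha}$. By definition, $\fd_{A_{\alpha}/A}$ is the ideal of $A$ generated by $D_{K/\vk}(1,\omega_1\kdots\omega_{n-1})$, and this quantity is independent of the chosen basis; so the whole assertion reduces to evaluating this single discriminant and identifying it with $D(f_{\alpha})$.

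First I would write down the transition matrix expressing $(1,\omega_1\kdots\omega_{n-1})$ in terms of $(1,\alpha\kdots\alpha^{n-1})$, using $\omega_i=a_0\alpha^i+a_1\alpha^{i-1}+\cdots+a_{i-1}\alpha$. Reading off coefficients, the row for $\omega_i$ has top entry $a_0$ in the $\alpha^i$-slot, entries $a_1\kdots a_{i-1}$ in the lower slots $\alpha^{i-1}\kdots\alpha$, and no component along $1$; the extra basis vector $1=\omega_0$ contributes the row $(1,0\kdots 0)$. This matrix is therefore triangular, with diagonal entries $1,a_0\kdots a_0$, and its determinant is $a_0^{n-1}$.

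By the standard transformation law for discriminants under a change of basis, $D_{K/\vk}(1,\omega_1\kdots\omega_{n-1})=(a_0^{n-1})^2\,D_{K/\vk}(1,\alpha\kdots\alpha^{n-1})$. The power-basis discriminant is a squared Vandermonde determinant, $D_{K/\vk}(1,\alpha\kdots\alpha^{n-1})=\prod_{1\le i<j\le n}(\alpha^{(i)}-\alpha^{(j)})^2$. Combining the two displays yields $a_0^{2n-2}\prod_{i<j}(\alpha^{(i)}-\alpha^{(j)})^2=D(f_{\alpha})$, whence $\fd_{A_{\alpha}/A}=D(f_{\alpha})A$, as claimed.

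The argument is essentially routine, and I expect no serious obstacle. The only point that needs genuine care is the triangular structure of the transition matrix: one must check that each $\omega_i$ has no constant component and that its leading term is exactly $a_0\alpha^i$, since this is precisely what fixes the diagonal and hence forces the determinant to equal $a_0^{n-1}$ rather than something depending on the other $a_j$.
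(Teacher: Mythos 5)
Your proof is correct and follows exactly the route the paper takes: the paper's proof is simply ``same reasoning as \eqref{eq1.4}'', and \eqref{eq1.4} is precisely the computation you carry out, namely passing from the power basis to the basis $1,\omega_1\kdots\omega_{n-1}$ via the triangular transition matrix with determinant $a_0^{n-1}$ and then using the Vandermonde evaluation of $D_{K/\vk}(1,\alpha\kdots\alpha^{n-1})$. Your explicit attention to the triangularity of the transition matrix is a correct and welcome elaboration of what the paper leaves implicit.
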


\begin{proof} Same reasoning as \eqref{eq1.4}.
\end{proof} 

For $\alpha_1\kdots \alpha_r\in L$, denote by $[\alpha_1\kdots \alpha_r]$ the fractional ideal
of $A_L$, i.e., $A_L$-module, generated by $\alpha_1\kdots \alpha_r$. Further,
for a finitely generated $A$-submodule $\MM$ of $K$ and for distinct $i,j\in\{ 1\kdots n\}$,
let $\fd_{ij}(\MM )$ be the fractional ideal of $A_L$ generated by 
$\xi^{(i)}-\xi^{(j)}$
for all $\xi\in\MM$. Thus, if $\MM$ is generated as an $A$-module by $\xi_1\kdots\xi_r$,
we have
\begin{equation}\label{eq2.6}
\fd_{ij}(\MM )=[\xi_1^{(i)}-\xi_1^{(j)}\kdots \xi_r^{(i)}-\xi_r^{(j)}].
\end{equation}

\begin{lemma}\label{lem2.1}
Let $\alpha$ be such that $K=\vk (\alpha )$ and $i,j\in\{ 1\kdots n\}$ with $i\not= j$.
Then
\[
[\alpha^{(i)}-\alpha^{(j)}]= [1,\alpha^{(i)}]\cdot [1,\alpha^{(j)}]\cdot 
\fd_{ij}(A_{\alpha}).
\]
\end{lemma}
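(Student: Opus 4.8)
The plan is to prove the asserted equality of fractional ideals of $A_L$ by reducing it, through an explicit factorization, to the multiplicativity of the content of polynomials. Write $f_{\alpha}=a_0X^n+\cdots +a_n$ and recall from Lemma \ref{lem2.0} that $A_{\alpha}$ has $A$-basis $1,\omega_1\kdots\omega_{n-1}$ with $\omega_k=\sum_{l=0}^{k-1}a_l\alpha^{k-l}$. Since $1^{(i)}-1^{(j)}=0$, formula \eqref{eq2.6} gives $\fd_{ij}(A_{\alpha})=[\omega_1^{(i)}-\omega_1^{(j)}\kdots\omega_{n-1}^{(i)}-\omega_{n-1}^{(j)}]$. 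First I would factor $\alpha^{(i)}-\alpha^{(j)}$ out of each generator: applying $x^m-y^m=(x-y)\sum_{p=0}^{m-1}x^py^{m-1-p}$ to $x=\alpha^{(i)}$, $y=\alpha^{(j)}$ yields $\omega_k^{(i)}-\omega_k^{(j)}=(\alpha^{(i)}-\alpha^{(j)})\,s_k$, where $s_k:=\sum_{l=0}^{k-1}a_lh_{k-1-l}$ and $h_m:=\sum_{p=0}^m(\alpha^{(i)})^p(\alpha^{(j)})^{m-p}$. As $\alpha$ is separable (characteristic $0$, $\vk(\alpha)=K$) we have $\alpha^{(i)}\neq\alpha^{(j)}$, so $[\alpha^{(i)}-\alpha^{(j)}]$ is invertible and $\fd_{ij}(A_{\alpha})=[\alpha^{(i)}-\alpha^{(j)}]\cdot[s_1\kdots s_{n-1}]$. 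Hence the lemma reduces to the identity
\[
[s_1\kdots s_{n-1}]\cdot[1,\alpha^{(i)}]\cdot[1,\alpha^{(j)}]=A_L.
\]

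The key step is to recognise the $s_k$ as the coefficients of a transparent polynomial. Put $A(t):=\sum_{l=0}^na_lt^l=t^nf_{\alpha}(1/t)=a_0\prod_{m=1}^n(1-\alpha^{(m)}t)$ and $H(t):=\big((1-\alpha^{(i)}t)(1-\alpha^{(j)}t)\big)^{-1}=\sum_{m\ge 0}h_mt^m$. A direct Cauchy-product computation shows $\sum_{k\ge 1}s_kt^{k-1}=A(t)H(t)$; cancelling the two factors present in both $A$ and $H^{-1}$ turns the right-hand side into $a_0\prod_{m\neq i,j}(1-\alpha^{(m)}t)$, a polynomial of degree $n-2$. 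Thus $s_1\kdots s_{n-1}$ are exactly the coefficients of
\[
G(t):=a_0\prod_{m\neq i,j}(1-\alpha^{(m)}t),\qquad A(t)=G(t)\,(1-\alpha^{(i)}t)(1-\alpha^{(j)}t).
\]

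Now I would invoke multiplicativity of the content. For $g=\sum_lc_lt^l\in L[t]$ let $c(g)$ denote the fractional $A_L$-ideal generated by its coefficients; then $c(G)=[s_1\kdots s_{n-1}]$, $c(1-\alpha^{(i)}t)=[1,\alpha^{(i)}]$, $c(1-\alpha^{(j)}t)=[1,\alpha^{(j)}]$, while $c(A)=[a_0\kdots a_n]=A_L$ because $f_{\alpha}$ is primitive. Since $A_L$ is a Dedekind (hence Pr\"ufer) domain, content is multiplicative, so taking contents of $A=G\cdot(1-\alpha^{(i)}t)(1-\alpha^{(j)}t)$ gives $A_L=[s_1\kdots s_{n-1}][1,\alpha^{(i)}][1,\alpha^{(j)}]$, which is the reduced identity. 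Multiplying by the invertible ideal $[\alpha^{(i)}-\alpha^{(j)}]$ and using $\fd_{ij}(A_{\alpha})=[\alpha^{(i)}-\alpha^{(j)}][s_1\kdots s_{n-1}]$ then yields the lemma.

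The main obstacle is the content-multiplicativity step, since $c(fg)=c(f)c(g)$ fails over a general domain and holds here only because $A_L$ is Dedekind. I would justify it cleanly one prime at a time: for each maximal ideal $\fP$ of $A_L$ the Gauss extension of ${\rm ord}_{\fP}$ to $L[t]$ is a valuation satisfying ${\rm ord}_{\fP}(c(g))=\min_l{\rm ord}_{\fP}(c_l)$, and multiplicativity of this Gauss valuation is the classical Gauss lemma; comparing $\fP$-exponents for all $\fP$ gives $c(A)=c(G)\,c(1-\alpha^{(i)}t)\,c(1-\alpha^{(j)}t)$. The only other point to keep track of is that every manipulation takes place in the group of invertible fractional ideals of the Dedekind domain $A_L$, which is legitimate precisely because $\alpha^{(i)}\neq\alpha^{(j)}$.
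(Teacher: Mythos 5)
Your proof is correct and is essentially the paper's argument in reversed variables: the identity $A(t)=G(t)(1-\alpha^{(i)}t)(1-\alpha^{(j)}t)$ with $c(G)=[s_1\kdots s_{n-1}]=[\alpha^{(i)}-\alpha^{(j)}]^{-1}\fd_{ij}(A_{\alpha})$ is the substitution $t=1/X$ applied to the paper's factorization $(\alpha^{(i)}-\alpha^{(j)})Xf_{\alpha}(X)=(X-\alpha^{(i)})(X-\alpha^{(j)})\sum_k(\omega_k^{(i)}-\omega_k^{(j)})X^{n-k}$, and both proofs then conclude by the same application of Gauss's lemma for Dedekind domains to the primitive polynomial $f_{\alpha}$.
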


\begin{proof} (cf. \cite[Lemma 17.6.4]{EG17})
Let $\omega_1\kdots\omega_{n-1}$ be as in \eqref{eq2.3}. Then
\[
\alpha f_{\alpha}(X)=(X-\alpha )(\omega_1 X^{n-1}+\omega_2X^{n-2}+\cdots +\omega_n),
\]
where $\omega_n:=-a_n$.
This implies
\begin{align*}
&(\alpha^{(i)}-\alpha^{(j)})Xf_{\alpha}(X)
\\
&\quad =(X-\alpha^{(j)})\alpha^{(i)}f_{\alpha}(X)-(X-\alpha^{(i)})\alpha^{(j)}f_{\alpha}(X)
\\
&\quad =(X-\alpha^{(i)})(X-\alpha^{(j)})\cdot \big( (\omega_1^{(i)}-\omega_1^{(j)})X^{n-1}+\cdots +
(\omega_{n-1}^{(i)}-\omega_{n-1}^{(j)})\big).
\end{align*}
We apply Gauss' lemma for Dedekind domains, which in our case asserts that if $g_1,g_2\in L[X]$
then $[g_1g_2]=[g_1]\cdot [g_2]$, where $[g]$ is the fractional ideal of $A_L$
generated by the coefficients of $g\in L[X]$.
Using that the coefficients of $f_{\alpha}$ have gcd $1$, together with
\eqref{eq2.3}, \eqref{eq2.6}, we obtain
\begin{align*}
[\alpha^{(i)}-\alpha^{(j)}] &=[1,\alpha^{(i)}]\cdot [1,\alpha^{(j)}]
\cdot [\omega_1^{(i)}-\omega_1^{(j)}\kdots \omega_{n-1}^{(i)}-\omega_{n-1}^{(j)}]
\\
&=[1,\alpha^{(i)}]\cdot [1,\alpha^{(j)}]\cdot \fd_{ij}(A_{\alpha}).
\end{align*}
\end{proof}

If $[K:\vk ]=n\geq 4$ then for
$\alpha$ with $K=\vk (\alpha )$ and pairwise distinct $i,j,k,l\in\{ 1\kdots n\}$, we define the \emph{cross ratio}
\begin{equation}\label{eq4.crossratio}
\cross_{ijkl}(\alpha ):=
\frac{(\alpha^{(i)}-\alpha^{(j)})(\alpha^{(k)}-\alpha^{(l)})}
{(\alpha^{(i)}-\alpha^{(k)})(\alpha^{(j)}-\alpha^{(l)})}.
\end{equation}

\begin{lemma}\label{lem2.2}
Suppose $[K:\vk ]=n\geq 4$.
Let $\alpha ,\beta$ be such that $\vk (\alpha )=\vk (\beta )=K$ and $A_{\alpha}=A_{\beta}$.
Then for all pairwise distinct $i,j,k,l\in\{ 1\kdots n\}$ we have
\[
\frac{\cross_{ijkl}(\alpha )}{\cross_{ijkl}(\beta )}\in A_L^*.
\]
\end{lemma}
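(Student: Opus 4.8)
The plan is to compute the principal fractional ideal of $A_L$ generated by $\cross_{ijkl}(\alpha )$ and to show that it depends only on the order $A_{\alpha}$ and not on the particular generator $\alpha$. First I would record that since $K=\vk (\alpha )$ has degree $n$ and $\vk$ has characteristic $0$, the minimal polynomial of $\alpha$ is separable, so the conjugates $\alpha^{(1)}\kdots\alpha^{(n)}$ are pairwise distinct; hence every difference $\alpha^{(i)}-\alpha^{(j)}$ with $i\not= j$ is a nonzero element of $L$, and $\cross_{ijkl}(\alpha )\in L^*$ is a well-defined nonzero element (and likewise for $\beta$).

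Writing $[\xi ]$ for the principal fractional ideal $\xi A_L$, the assignment $\xi\mapsto [\xi ]$ is multiplicative on $L^*$, so
\[
[\cross_{ijkl}(\alpha )]=
\frac{[\alpha^{(i)}-\alpha^{(j)}]\,[\alpha^{(k)}-\alpha^{(l)}]}
{[\alpha^{(i)}-\alpha^{(k)}]\,[\alpha^{(j)}-\alpha^{(l)}]}.
\]
Now I apply Lemma \ref{lem2.1} to each of the four factors, substituting $[\alpha^{(p)}-\alpha^{(q)}]=[1,\alpha^{(p)}]\cdot [1,\alpha^{(q)}]\cdot\fd_{pq}(A_{\alpha})$. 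The crucial point is that each of the four indices $i,j,k,l$ occurs exactly once among the two numerator differences $(i,j),(k,l)$ and exactly once among the two denominator differences $(i,k),(j,l)$; consequently all four ideals $[1,\alpha^{(i)}],[1,\alpha^{(j)}],[1,\alpha^{(k)}],[1,\alpha^{(l)}]$ cancel, and what survives is
\[
[\cross_{ijkl}(\alpha )]=
\frac{\fd_{ij}(A_{\alpha})\,\fd_{kl}(A_{\alpha})}
{\fd_{ik}(A_{\alpha})\,\fd_{jl}(A_{\alpha})}.
\]

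Running the identical computation for $\beta$ yields the same expression with $A_{\alpha}$ replaced by $A_{\beta}$. Since each ideal $\fd_{pq}(\cdot )$ is defined purely in terms of the underlying $A$-module, the hypothesis $A_{\alpha}=A_{\beta}$ forces $\fd_{pq}(A_{\alpha})=\fd_{pq}(A_{\beta})$ for every pair $p\not= q$, and therefore $[\cross_{ijkl}(\alpha )]=[\cross_{ijkl}(\beta )]$ as fractional ideals of $A_L$. Hence $\big[\cross_{ijkl}(\alpha )/\cross_{ijkl}(\beta )\big]=A_L$, so the quotient generates the unit ideal and lies in $A_L^*$, as claimed.

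I do not anticipate a genuine obstacle: the entire argument is bookkeeping of how the auxiliary factors $[1,\alpha^{(\cdot)}]$ produced by Lemma \ref{lem2.1} cancel inside a cross ratio, and indeed this cancellation is exactly the reason the cross ratio, rather than a single difference, is the natural $A_{\alpha}$-invariant. The only points needing care are the separability remark guaranteeing that the cross ratio is a nonzero element, and matching the index pattern correctly so that the cancellation of the $[1,\alpha^{(\cdot)}]$ factors is complete.
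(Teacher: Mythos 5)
Your argument is correct and is exactly the paper's proof, which compresses the same computation into the single sentence "Lemma \ref{lem2.1} implies $[\cross_{ijkl}(\alpha )]=[\cross_{ijkl}(\beta )]$ for all $i,j,k,l$": the cancellation of the $[1,\alpha^{(p)}]$ factors in the cross ratio, leaving $\fd_{ij}(A_{\alpha})\fd_{kl}(A_{\alpha})\fd_{ik}(A_{\alpha})^{-1}\fd_{jl}(A_{\alpha})^{-1}$, is precisely what the author has in mind. You have simply made the bookkeeping explicit.
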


\begin{proof}
Lemma \ref{lem2.1} implies $[\cross_{ijkl}(\alpha )]=[\cross_{ijkl}(\beta )]$ for all $i,j,k,l$.
\end{proof}

\begin{lemma}\label{lem2.3}
Let $K$ be a finite extension of $\vk$, and
let $\alpha,\beta$ be such that $\vk (\alpha )=\vk (\beta )=K$.
\\[0.1cm]
(i) Suppose that $[K:\vk ]=3$. Then $\alpha ,\beta$ are $\GL_2(\vk )$-equivalent.
\\[0.1cm]
(ii) Suppose $[K:\vk ]=n\geq 4$.
Then $\alpha ,\beta$ are $\GL_2(\vk )$-equivalent 
if and only if
$\cross_{ijkl}(\alpha )=\cross_{ijkl}(\beta )$ for all pairwise distinct $i,j,k,l\in\{ 1\kdots n\}$.
\end{lemma}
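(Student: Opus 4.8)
\emph{The plan.} The unifying remark I would start from is that for $C=\big(\begin{smallmatrix}a&b\\c&d\end{smallmatrix}\big)\in\GL_2(\vk)$ the entries lie in $\vk$ and so are fixed by every embedding $x\mapsto x^{(i)}$; hence $(C\alpha)^{(i)}=\frac{a\alpha^{(i)}+b}{c\alpha^{(i)}+d}=C(\alpha^{(i)})$, and $\beta=C\alpha$ forces $\beta^{(i)}=C(\alpha^{(i)})$ for \emph{all} $i$. Conversely, $\beta=C\alpha$ for some $C\in\GL_2(\vk)$ is the same as the existence of $(a,b,c,d)\in\vk^4$ with $ad-bc\neq 0$ and $c\alpha\beta+d\beta-a\alpha-b=0$, i.e.\ a $\vk$-linear relation among $1,\alpha,\beta,\alpha\beta$ whose coefficient vector yields an invertible matrix. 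Both parts I would prove by producing such a relation and then verifying non-degeneracy (nonvanishing denominator and nonzero determinant).

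For (i), the elements $1,\alpha,\beta,\alpha\beta$ are four vectors in the $3$-dimensional $\vk$-vector space $K$, hence $\vk$-linearly dependent. This produces $(a,b,c,d)\in\vk^4\setminus\{0\}$ with $\beta(c\alpha+d)=a\alpha+b$. Since $\alpha\notin\vk$ one cannot have $(c,d)=(0,0)$ without also forcing $a=b=0$, so $c\alpha+d\neq 0$ and $\beta=\frac{a\alpha+b}{c\alpha+d}$. If $ad-bc=0$ the map $\xi\mapsto\frac{a\xi+b}{c\xi+d}$ is constant, which would place $\beta\in\vk$, contradicting $\vk(\beta)=K$; hence $ad-bc\neq 0$ and $C\in\GL_2(\vk)$.

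For (ii), the ``only if'' direction is immediate from the opening remark: $\beta^{(i)}=C(\alpha^{(i)})$ for all $i$, and a Möbius transformation preserves the cross ratio \eqref{eq4.crossratio}, so $\cross_{ijkl}(\beta)=\cross_{ijkl}(\alpha)$ for all distinct $i,j,k,l$. For the converse I would first build the candidate over $L$: since $\alpha$ has degree $n\geq 4$ it has $n$ distinct conjugates, so there is a unique $C\in\PGL_2(L)$ with $C(\alpha^{(k)})=\beta^{(k)}$ for $k=1,2,3$. With the first three target points fixed and distinct, the cross ratio $\cross_{123\,\ast}$ is injective in its last argument, so the hypotheses $\cross_{123i}(\alpha)=\cross_{123i}(\beta)$ together with the $C$-invariance of the cross ratio upgrade this to $C(\alpha^{(i)})=\beta^{(i)}$ for \emph{every} $i$. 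Writing $C=\big(\begin{smallmatrix}a&b\\c&d\end{smallmatrix}\big)\in\GL_2(L)$, this says the nonzero vector $(-b,-a,d,c)$ annihilates every row $(1,\alpha^{(i)},\beta^{(i)},\alpha^{(i)}\beta^{(i)})$, so the $n\times 4$ matrix $V=(1,\alpha^{(i)},\beta^{(i)},\alpha^{(i)}\beta^{(i)})_i$ has a nonzero kernel.

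The remaining step, which I expect to be the crux, is descending this relation from $L$ to $\vk$. Since each $\sigma\in{\rm Gal}(L/\vk)$ merely permutes the embeddings, it permutes the rows of $V$; therefore $\ker V$ is stable under the entrywise Galois action and is an $L$-subspace defined over $\vk$, so it contains a nonzero $(b_0,a_0,d_0,c_0)\in\vk^4$. This vector gives $\beta^{(i)}(c_0\alpha^{(i)}+d_0)=a_0\alpha^{(i)}+b_0$ for all $i$, and the non-degeneracy checks of (i) must be redone here, using that the $\alpha^{(i)}$ are pairwise distinct, the $\beta^{(i)}$ are pairwise distinct, and $n\geq 4$: if $a_0d_0-b_0c_0=0$ the relation can hold for at most two indices $i$ (one where the common denominator vanishes, one where $\beta^{(i)}$ hits the degenerate constant value), contradicting $n\geq 4$; and $c_0\alpha+d_0\neq 0$ since $(c_0,d_0)\neq(0,0)$ and $\alpha\notin\vk$. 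Thus $C_0=\big(\begin{smallmatrix}a_0&b_0\\c_0&d_0\end{smallmatrix}\big)\in\GL_2(\vk)$ and $\beta=C_0\alpha$. (Equivalently, one can phrase the descent as $H^1({\rm Gal}(L/\vk),L^*)=0$ applied to the $\sigma$-invariant class $C\in\PGL_2(L)$; I prefer the linear-algebra version because it keeps the invertibility bookkeeping explicit.)
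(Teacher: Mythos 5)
Your proof is correct, and for part (ii) it follows essentially the paper's route: both arguments produce a M\"{o}bius transformation $C\in\GL_2(L)$ with $\beta^{(i)}=C\alpha^{(i)}$ for all $i$ from the cross-ratio equalities (the paper cites this step as ``elementary projective geometry''; you make it explicit via the injectivity of $\cross_{123\,*}$ in its last slot), and both then descend $C$ to $\GL_2(\vk )$ by a Galois argument. The descent mechanisms differ only in form: the paper normalizes one entry of $C$ to be $1$ and uses the uniqueness of $C$ up to a scalar to force $\sigma (C)=C$ for all $\sigma\in{\rm Gal}(L/\vk )$, whereas you observe that the kernel of the $n\times 4$ matrix with rows $(1,\alpha^{(i)},\beta^{(i)},\alpha^{(i)}\beta^{(i)})$ is stable under the Galois action and hence contains a nonzero vector of $\vk^4$ --- at the price of having to re-verify non-degeneracy ($a_0d_0-b_0c_0\neq 0$), which you do correctly using that the conjugates of $\alpha$ and of $\beta$ are pairwise distinct. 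Where you genuinely diverge is part (i): the paper again passes to $L$ (the unique M\"{o}bius map sending $\alpha^{(i)}\mapsto\beta^{(i)}$ for $i=1,2,3$, followed by the same descent), while you stay inside $K$ and note that $1,\alpha ,\beta ,\alpha\beta$ are four vectors in the three-dimensional $\vk$-vector space $K$, hence $\vk$-linearly dependent. This is more elementary, avoids the normal closure entirely, and your degeneracy analysis ($(c,d)\neq (0,0)$, then $ad-bc\neq 0$ since otherwise $\beta\in\vk$) is complete; the two approaches prove the same statement, with yours making the invertibility bookkeeping explicit where the paper leaves it implicit.
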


\begin{proof} (cf. 
\cite[Lemma 17.7.2]{EG17})
(ii) From elementary projective geometry, we know that
$\cross_{ijkl}(\alpha )=\cross_{ijkl}(\beta )$ for all pairwise distinct $i,j,k,l\in\{ 1\kdots n\}$
if and only if there is $C\in\GL_2(L)$ such that $\beta^{(i)}=C\alpha^{(i)}$ for $i=1\kdots n$.
Suppose the latter to be the case. Then since $n\geq 4$, the matrix $C$ is determined uniquely up to a scalar.
Clearly, we have $\beta^{(i)}=\sigma (C)\alpha^{(i)}$ for $i=1\kdots n$ and every $\sigma\in{\rm Gal}(L/\vk )$.
If we assume that one of the entries of $C$ is $1$, then $\sigma (C)=C$ for every $\sigma\in{\rm Gal}(L/\vk )$,
i.e., $C\in\GL_2(\vk )$.

(i) By elementary projective geometry, there is an up to a scalar factor unique
$C\in\GL_2(L)$ such that $\beta^{(i)}=C\alpha^{(i)}$ for $i=1,2,3$. If we take
$C$ such that one of its entries is $1$ then similarly as above
it follows that $C\in\GL_2(\vk )$.
\end{proof}

\begin{lemma}\label{lem2.4}
Assume that $[K:\vk ]\geq 3$.
Let $\alpha ,\beta$ be such that $\vk (\alpha )=\vk (\beta )=K$ and $A_{\alpha}=A_{\beta}$.
Suppose that $\alpha$, $\beta$ are $\GL_2(\vk )$-equivalent.
Then $\alpha ,\beta$ are $\GL_2(A)$-equivalent.
\end{lemma}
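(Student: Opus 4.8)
The plan is to reduce the statement to a single integrality assertion about one matrix and then to check it prime by prime. Since $\alpha,\beta$ are $\GL_2(\vk)$-equivalent there is $C=\big(\begin{smallmatrix}a&b\\c&d\end{smallmatrix}\big)\in\GL_2(\vk)$ with $\beta=C\alpha$; because $A$ is a principal ideal domain I may multiply $C$ by a suitable element of $\vk^*$ so that $a,b,c,d\in A$ and $\gcd(a,b,c,d)=1$, i.e. so that $C$ is primitive and integral. This normalization does not change the fractional linear map, and such a primitive integral representative lies in $\GL_2(A)$ exactly when $\det C\in A^*$. Thus the whole lemma comes down to proving $\det C\in A^*$, i.e. $\ordfp(\det C)=0$ for every prime $\fp$ of $A$.

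Applying the $\vk$-embeddings $x\mapsto x^{(i)}$, which fix the entries of $C$, gives $\beta^{(i)}=C\alpha^{(i)}$ for all $i$, and a direct computation yields
\[
\beta^{(i)}-\beta^{(j)}=\frac{(\det C)\,(\alpha^{(i)}-\alpha^{(j)})}{(c\alpha^{(i)}+d)(c\alpha^{(j)}+d)},\qquad
[1,\beta^{(i)}]=[a\alpha^{(i)}+b,\,c\alpha^{(i)}+d]\cdot[c\alpha^{(i)}+d]^{-1}.
\]
Feeding these into Lemma \ref{lem2.1}, written for both $\alpha$ and $\beta$, and using $A_{\alpha}=A_{\beta}$ (so that $\fd_{ij}(A_{\alpha})=\fd_{ij}(A_{\beta})$), the factors $[1,\alpha^{(i)}]$, the differences $[\alpha^{(i)}-\alpha^{(j)}]$ and the ideals $\fd_{ij}$ all cancel, leaving for all $i\neq j$ the clean relation
\[
[a\alpha^{(i)}+b,\,c\alpha^{(i)}+d]\cdot[a\alpha^{(j)}+b,\,c\alpha^{(j)}+d]=[\det C]\cdot[1,\alpha^{(i)}]\cdot[1,\alpha^{(j)}].
\]
Because $n\geq 3$ I can solve this pairwise system: setting $\fa:=[a\alpha^{(i)}+b,\,c\alpha^{(i)}+d]\cdot[1,\alpha^{(i)}]^{-1}$, the relations force $\fa$ to be independent of $i$ and give $\fa^2=[\det C]$. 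Since $\det C\in\vk$ and the embeddings permute the $\alpha^{(i)}$, the ideal $\fa$ is stable under $\mathrm{Gal}(L/\vk)$, it is integral, and $[\det C]\subseteq\fa\subseteq A_L$. This is the easy half: it shows that $(\det C)A_L$ is the square of a Galois-stable integral ideal.

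The serious point is to upgrade this to $\det C\in A^*$, and here the full strength of $A_{\alpha}=A_{\beta}$ as equal subrings of $K$ — not merely the coincidence of their cross-ratio ideals or of their discriminants — must be used. Indeed the discriminant identity $\fd_{A_{\alpha}/A}=\fd_{A_{\beta}/A}$ only produces a relation that is by itself consistent with $\det C$ being a nontrivial square, so it cannot finish the argument. I would work locally: fix $\fp$, pass to the discrete valuation ring $A_{\fp}$, and put the primitive matrix $C$ into Smith normal form over $A_{\fp}$; primitivity forces the elementary divisors to be $1$ and $\pi^{t}$ with $t=\ordfp(\det C)$. Tracking the induced action on the rank-one $A_{L,\fP}$-ideals $[1,\alpha^{(i)}]$ at a prime $\fP\mid\fp$ of $A_L$ turns $\fa^2=[\det C]$ into the requirement, when $t>0$, that the reduction of $C$ modulo $\fP$ send every residue $\overline{\alpha^{(i)}}$ to a single point of the projective line; as a reduced M\"obius transformation is a bijection, this can only happen if all conjugates $\alpha^{(i)}$ collapse to one residue modulo $\fP$.

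The main obstacle is precisely this degenerate configuration: excluding, for $t>0$, that all $n$ conjugates become congruent modulo $\fP$. I expect to eliminate it by invoking $A_{\alpha}=A_{\beta}=A[\alpha]\cap A[\alpha^{-1}]$ at the level of localizations. After a $\GL_2(A)$-translation carrying the common residue to $0$ one has $\fp\mid a_1,\dots,a_n$ and $a_0\in A^*$, and I would show that the equality of the two local orders $A_{\alpha,\fp}=A_{\beta,\fp}$, read off from the explicit bases of Lemma \ref{lem2.0}, is incompatible with $t>0$. It is at this step, and essentially only here, that the hypothesis $[K:\vk]=n\geq 3$ is genuinely needed: three conjugates are required both to pin $C$ down from its action on the $\alpha^{(i)}$ and to solve the pairwise ideal system that produces $\fa$.
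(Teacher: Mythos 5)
Your proposal correctly reduces the lemma to showing that a primitive integral representative $C$ of the equivalence has $\det C\in A^*$, and the ideal identity $\fa^2=[\det C]$ that you extract from Lemma \ref{lem2.1} is correct. But the proof is not complete: the step you yourself flag as ``the serious point'' --- ruling out $t=\ordfp (\det C)>0$ in the degenerate configuration where all conjugates of $\alpha$ collapse modulo $\fP$ --- is only announced (``I would show that the equality of the two local orders \dots is incompatible with $t>0$''), not carried out. That missing computation is essentially the entire content of the paper's proof, and everything you do before it is a detour the paper does not need. The paper argues directly: put $C$ into Smith normal form over the principal ideal domain $A$, so that after replacing $\alpha ,\beta$ by $\GL_2(A)$-translates one has $\beta =a\alpha$ with $a\in A\setminus\{0\}$; then $f_{\beta}=\lambda f_{\alpha}(X/a)$ for a normalizing $\lambda\in\vk$, and the explicit basis of Lemma \ref{lem2.0} shows that $A_{\beta}$ is the free $A$-module on $1,\lambda a^{1-n}\omega_1\kdots\lambda a^{-1}\omega_{n-1}$, where $1,\omega_1\kdots\omega_{n-1}$ is the basis of $A_{\alpha}$. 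Equality of the two orders forces $\lambda a^{i-n}\in A^*$ for $i=1\kdots n-1$; since $n-1\geq 2$ there are two distinct exponents, whence $a\in A^*$. In particular the hypothesis $n\geq 3$ enters only to supply these two exponents, not (as you suggest) to pin down $C$ from its action on the conjugates or to solve the pairwise ideal system.

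If you want to complete your route you must actually prove the local incompatibility, and your stated reason for the collapse is also garbled: when $t>0$ the reduction $\overline{C}$ is a nonzero singular matrix, hence not a M\"obius transformation at all; the correct reading of $\fa\subseteq\fP$ is that every $\overline{\alpha^{(i)}}$ lies in the one-point kernel of $\overline{C}$ on the projective line. Even granting that, finishing requires exactly the basis comparison above carried out in $A_{\fp}$, so the cross-ratio-style ideal identity buys nothing here. I recommend replacing the second half of your argument by the Smith-normal-form computation.
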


\begin{proof} (cf. \cite[Proposition 17.6.5]{EG17}) 
Since $A$ is a principal ideal domain, we may assume that $\beta =C\alpha$, where the entries of $C$ belong to $A$ and have gcd $1$. Further, $C$ can be put into Smith Normal Form,
i.e., there are matrices $U,V\in\GL_2(A)$ such that 
$UCV=\big(\begin{smallmatrix}a&0\\ 0&1\end{smallmatrix}\big)$ with $a\in A\setminus\{ 0\}$.
Let $\beta_1:=U\beta$, $\alpha_1:=V^{-1}\alpha$. Then since $\alpha ,\beta\not\in\vk$
we have $\alpha_1,\beta_1\not=\infty$ and moreover, $A_{\alpha_1}=A_{\beta_1}$
and $\beta_1=a\alpha_1$. We have to show that $a\in A^*$.

Let $f_{\alpha_1}(X)=a_0X^n+\cdots +a_n\in A[X]$ be a primitive minimal polynomial of $\alpha_1$, i.e., with 
$\gcd (a_0\kdots a_n)=1$. Then $\beta_1$ has primitive minimal polynomial
\[ 
f_{\beta_1}(X)=\lambda f_{\alpha_1}(X/a)=\lambda (a^{-n}a_0X^n+a^{1-n}a_1X^{n-1}+\cdots +a_n),
\]
where $\lambda\in\vk$ is such that the coefficients of $f_{\beta_1}$ are in $A$
and have gcd $1$. By \eqref{eq2.3}, $A_{\alpha_1}$ is a free $A$-module with basis 
$1$, $\omega_1\kdots \omega_{n-1}$ with $\omega_i=\sum_{k=0}^{i-1} a_k\alpha_1^{i-k}$
for $i=1\kdots n-1$.
By replacing $\alpha_1$ with $\beta_1=a\alpha_1$, and $a_i$ by $\lambda a^{i-n}a_i$, we see that $A_{\beta_1}$ has basis $1,\lambda a^{1-n}\omega_1,\lambda a^{2-n}\omega_2\kdots \lambda a^{-1}\omega_{n-1}$. Since $A_{\alpha_1}=A_{\beta_1}$, this must imply
\[
\lambda a^{i-n}\in A^*\ \ \text{for } i=1\kdots n-1,
\]
hence $a\in A^*,\lambda\in A^*$.
\end{proof}

\section{Application of unit equations}\label{section3}

Let $K$ be a number field of degree $n\geq 4$ and $L$ its normal closure.
In the case $n=4$ we don't impose any constraints on $L$,
while for $n\geq 5$ we assume that ${\rm Gal}(L/\Qq )$ is $5$-transitive.

We call $\alpha_1\in K$ $k$-\emph{special} if $K=\Qq (\alpha_1 )$ and there are $\alpha_2\kdots \alpha_k$ such that $\Zz_{\alpha_1}=\cdots =\Zz_{\alpha_k}$ and $\alpha_1\kdots \alpha_k$ are pairwise $\GL_2(\Zz )$-inequivalent.
We call $\alpha_1$ \emph{special} if it is $2$-special.

Theorem \ref{thm1.1} follows,
once we have shown that in the case $n=4$, the $3$-special numbers of $K$ lie in only finitely
many $\GL_2 (\Zz )$-equivalence classes, and in the case $n\geq 5$ that the special numbers of $K$
lie in only finitely many $\GL_2(\Zz )$-equivalence classes. Indeed,
the orders of $K$ with $k$ rational monogenizations are all of the shape $\Zz_{\alpha}$ where
$\alpha$ is $k$-special, and if such $\alpha$ lie in only finitely many $\GL_2(\Zz )$-equivalence classes,
there are only finitely many orders $\Zz_{\alpha}$.

In the present section we prove the following proposition.
Here, we apply some results from the theory of unit equations.

\begin{proposition}\label{prop3.1}
(i) Let $K$ be a quartic number field. Then the set of $3$-special numbers of $K$
is contained in finitely many $\GL_2(\Qq )$-equivalence classes.
\\[0.1cm]
(ii) Let $K$ be a number field of degree $n\geq 5$ such that the Galois group of its normal closure $L$ is $5$-transitive. 
Then the set of special numbers of $K$
is contained in finitely many $\GL_2(\Qq )$-equivalence classes.
\end{proposition}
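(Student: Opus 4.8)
The plan is to show that the cross ratios $\cross_{ijkl}(\alpha_1)$ of a special (resp.\ $3$-special) number $\alpha_1$ can take only finitely many values; by Lemma \ref{lem2.3}(ii) this confines $\alpha_1$ to finitely many $\GL_2(\Qq)$-equivalence classes. Throughout I work with $A=\Zz$, so that $A_L=\OO_L$ and $A_L^*=\OO_L^*$ is the finitely generated unit group. The starting point is Lemma \ref{lem2.2}: whenever $\Zz_{\alpha}=\Zz_{\beta}$ one has $\cross_{ijkl}(\alpha)/\cross_{ijkl}(\beta)\in\OO_L^*$ for all pairwise distinct $i,j,k,l$. Combining this with the anharmonic relation $\cross_{ijkl}+\cross_{ilkj}=1$, valid for every fixed number, I can express each cross ratio of $\alpha_1$ as an explicit rational function of such unit ratios. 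Hence it suffices to prove that the relevant units take only finitely many values.

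Next I would extract a unit equation. For $n\ge 5$ I fix five pairwise distinct conjugate indices (legitimate since $n\ge 5$) and normalise the $5$-tuples of conjugates of $\alpha$ and of its witness $\beta$ projectively to $\infty,0,1,s,t$ and $\infty,0,1,s',t'$; every cross ratio is then a fixed M\"obius function of $(s,t)$ resp.\ $(s',t')$. Applying Lemma \ref{lem2.2} to the $4$-subsets $\{1,3,4,2\}$, $\{1,2,3,4\}$, $\{1,3,5,2\}$, $\{1,2,3,5\}$, $\{1,2,4,5\}$ gives
\[
s=\sigma s',\quad s-1=\sigma_1(s'-1),\quad t=\tau t',\quad t-1=\tau_1(t'-1),\quad t-s=\mu(t'-s')
\]
with $\sigma,\sigma_1,\tau,\tau_1,\mu\in\OO_L^*$. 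Eliminating $s,s',t,t'$ yields a single multiplicative identity among these five units whose full expansion is an $S$-unit equation $\sum_i u_i=1$ (with $S$ the archimedean places), each $u_i$ a monomial in $\sigma,\sigma_1,\tau,\tau_1,\mu$ and hence a unit. For $n=4$ there is only one cross-ratio parameter, so I instead use the three members $\alpha_1,\alpha_2,\alpha_3$ of a $3$-special triple: with $\lambda_a=\cross_{1234}(\alpha_a)$, Lemma \ref{lem2.2} makes $\lambda_1/\lambda_3,\lambda_2/\lambda_3,(1-\lambda_1)/(1-\lambda_3),(1-\lambda_2)/(1-\lambda_3)$ units, and eliminating $\lambda_3$ between the two pairs again produces a unit equation in four units.

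The finiteness theorem for unit equations in several unknowns, a consequence of Schmidt's Subspace Theorem, then shows that the non-degenerate solutions of these equations form a finite set. Tracing back, finitely many admissible tuples of units give finitely many values of $s,t$ (resp.\ of the $\lambda_a$), hence finitely many cross-ratio tuples for $\alpha_1$, and finitely many $\GL_2(\Qq)$-classes by Lemma \ref{lem2.3}.

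The main obstacle is the degenerate solutions, i.e.\ those for which a proper subsum of the unit equation vanishes; these are not governed by the finiteness theorem and form the technical heart of the argument. I expect to dispose of them as follows. A vanishing subsum forces an algebraic coincidence among the cross ratios, typically either that two of the numbers share the same cross-ratio tuple, or that some cross ratio degenerates to $0$, $1$, or $\infty$. In the first case two of the $\alpha_a$ are $\GL_2(\Qq)$-equivalent; since they share the order $\Zz_{\alpha_a}$, Lemma \ref{lem2.4} upgrades this to $\GL_2(\Zz)$-equivalence, contradicting the pairwise $\GL_2(\Zz)$-inequivalence built into being special or $3$-special. Degenerate values of a cross ratio would force a collision of conjugates and are excluded outright. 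The remaining degeneracies impose a nontrivial algebraic relation on a $4$- or $5$-tuple of conjugates of $\alpha$; this is exactly where I would invoke the hypothesis that $\mathrm{Gal}(L/\Qq)$ is $5$-transitive (for $n\ge 5$), since it propagates any such relation to all tuples of conjugates and thereby pins $\alpha$ down to a thin, separately treatable family. Matching this case analysis to the precise subsum structure of the unit equation obtained is where the real work lies.
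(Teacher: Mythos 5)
Your overall strategy coincides with the paper's: use Lemma \ref{lem2.2} together with the relation $\cross_{ijkl}+\cross_{ilkj}=1$ to manufacture unit equations whose solutions control the cross ratios of $\alpha_1$, then conclude via Lemma \ref{lem2.3}(ii). But there is a genuine gap, and it sits exactly where you say "the real work lies": the treatment of vanishing subsums. Your elimination of $s,s',t,t'$ from the five relations produces a unit equation with roughly a dozen monomial terms in $\sigma,\sigma_1,\tau,\tau_1,\mu$, and the subsum structure of such an equation is combinatorially large; it is not at all automatic that every vanishing subsum forces one of the three coincidences you list, and some subsums could a priori correspond to infinite families of special $\alpha$. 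The paper avoids this explosion by choosing a very particular identity, $(\ve_{1234}-1)(\ve_{1245}-1)(\ve_{1253}-1)=(\ve_{1243}-1)(\ve_{1254}-1)(\ve_{1235}-1)$, for which the complete degenerate-solution classification is already available (Lemma \ref{lem3.3}, from \cite{BEG13}), and then spends six sub-cases (b1--b4, c1--c2) using the relations \eqref{eq3.5}, \eqref{eq3.6} and the $5$-transitivity (which makes all the $\ve_{ijkl}$ conjugate) to force $\ve_{1234}$ into a finite set in every degenerate branch. Without an analogous complete classification for your equation, the proof is not finished; asserting that the degeneracies "pin $\alpha$ down to a thin, separately treatable family" is a restatement of the problem, not a solution.

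For the quartic case your route is also more laborious than necessary and still leaves the same gap. The paper observes that $(1,1)$, $(\ve_{1234},\ve_{1432})$ and $(\eta_{1234},\eta_{1432})$ are three \emph{distinct} solutions of the single two-term equation $\cross_{1234}(\alpha)x+\cross_{1432}(\alpha)y=1$ (distinctness coming from \eqref{eq3.1}, i.e.\ from Lemmas \ref{lem2.3} and \ref{lem2.4}), and then quotes the theorem of Evertse--Gy\H{o}ry--Stewart--Tijdeman (Lemma \ref{lem3.2}) that only finitely many coefficient pairs $(a,b)$ with $a+b=1$ admit more than two solutions. This immediately puts $\cross_{1234}(\alpha)$ in a finite set with no subsum analysis at all, whereas your four-unit, six-term equation would again require a case-by-case discussion of degenerate subsums. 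I would also flag one smaller point you should make explicit: your eliminations divide by quantities like $\sigma-\sigma_1$ and $u-u'$, and showing these are nonzero requires precisely the observation \eqref{eq3.1} that $\GL_2(\Zz)$-inequivalent generators of the same order have distinct cross ratios; this is where Lemma \ref{lem2.4} enters and should not be left implicit.
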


We will show later (see Proposition \ref{prop5.1} below) that
a $\GL_2(\Qq )$-equivalence class of special numbers is the union of finitely many 
$\GL_2(\Zz )$-equivalence classes.

We start with some initial observations. Let $\alpha ,\beta\in K$ with $\Qq (\alpha )=\Qq (\beta )=K$,
$\Zz_{\alpha}=\Zz_{\beta }$ and $\alpha ,\beta$ $\GL_2(\Zz )$-inequivalent.
Then
\begin{equation}\label{eq3.1}
\begin{array}{c}
\cross_{ijkl}(\alpha )\not=\cross_{ijkl}(\beta )
\\[0.1cm]
\text{for all pairwise distinct $i,j,k,l\in\{ 1\kdots n\}$.}
\end{array}
\end{equation}
Indeed,
suppose that for some tuple $(i,j,k,l)$ we have equality, say $(1,2,3,4)$. 
In the case $n=4$ this implies equality for each permutation $(i,j,k,l)$ of $(1,2,3,4)$
since $\cross_{ijkl}(\cdot )$ is a fractional linear transformation
of $\cross_{1234}(\cdot )$.
In the case $n\geq 5$, we obtain equality for all $i,j,k,l$
since by our assumption on the normal closure $L$, there is $\sigma\in{\rm Gal}(L/\Qq )$
that maps $\cross_{1234}(\cdot )$ to $\cross_{ijkl}(\cdot )$.
Lemma \ref{lem2.3} now implies that $\alpha ,\beta$ are $\GL_2(\Qq )$-equivalent, and subsequently Lemma \ref{lem2.4} that $\alpha ,\beta$ are $\GL_2(\Zz )$-equivalent,
contrary to our assumption.

Another important observation is the identity for cross ratios
\begin{equation}\label{eq3.2}
\cross_{ijkl}(\alpha )+\cross_{ilkj}(\alpha )=1
\end{equation}
for all $\alpha\in K$ and all pairwise distinct $i,j,k,l\in\{ 1\kdots n\}$.
Now let $\alpha, \beta$ be such that $\Qq (\alpha )=\Qq (\beta )=K$
and $\Zz_{\alpha}=\Zz_{\beta}$. Put
\[
\varepsilon_{ijkl}:=\frac{\cross_{ijkl}(\beta )}{\cross_{ijkl}(\alpha )};
\]
then from \eqref{eq3.2} and Lemma \ref{lem2.2}
we deduce
\begin{equation}\label{eq3.3}
\cross_{ijkl}(\alpha)\cdot \varepsilon_{ijkl}+\cross_{ilkj}(\alpha )\cdot \varepsilon_{ilkj}=1,\ \ 
\varepsilon_{ijkl}\in\OO_L^*,\ \varepsilon_{ilkj}\in\OO_L^*,
\end{equation}
where $\OO_L$ is the ring of integers of $L$. This allows us to apply the theory of unit equations.

We first prove part (i), and then part (ii).

\begin{proof}[Proof of part (i) of Proposition \ref{prop3.1}]
Let $K$ be a quartic number field, and let $\alpha\in K$ be $3$-special. Choose $\beta ,\gamma\in K$ such that
$\alpha ,\beta ,\gamma$ are pairwise $\GL_2(\Zz )$-inequivalent,
and $\Zz_{\alpha}=\Zz_{\beta}=\Zz_{\gamma}$. Put
\[
\varepsilon_{ijkl}:=\frac{\cross_{ijkl}(\beta )}{\cross_{ijkl}(\alpha )},\ \ 
\eta_{ijkl}:=\frac{\cross_{ijkl}(\gamma )}{\cross_{ijkl}(\alpha )}
\]
for each permutation $(i,j,k,l)$ of $(1,2,3,4)$.

By \eqref{eq3.1}--\eqref{eq3.3}, the pairs $(1,1)$, $(\varepsilon_{1234},\varepsilon_{1432})$,
$(\eta_{1234},\eta_{1432})$ are three distinct solutions to the equation
\begin{equation}\label{eq3.0}
\cross_{1234}(\alpha )x+\cross_{1432}(\alpha )y=1\ \ \text{in } x,y\in\OO_L^*.
\end{equation}
We now apply the following result on unit equations.
\footnote{Equations with unknowns from a multiplicative group $\Gamma$ of finite rank are often called
`unit equations' since in most applications, $\Gamma$ is the unit group of a domain.}

\begin{lemma}\label{lem3.2}
Let $F$ be a field of characteristic $0$ and $\Gamma$ a subgroup of $F^*$ of finite rank.
Then there are only finitely many pairs $(a,b)\in F^*\times F^*$ with $a+b=1$
such that the equation
\[
ax+by=1\ \ \text{in } x,y\in\Gamma
\]
has more than two solutions, the pair $(1,1)$ included.
\end{lemma}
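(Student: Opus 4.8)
The plan is to reduce the existence of a third solution to a multi-term unit equation and then read off that only finitely many coefficient pairs can occur. First I would record that, for any solution $(x,y)$ of $ax+by=1$, subtracting the relation $a+b=1$ gives $a(x-1)+b(y-1)=0$; in particular $x=1$ forces $y=1$ and conversely, so every solution other than $(1,1)$ has $x\neq 1$, $y\neq 1$ and $\frac{y-1}{x-1}=-a/b=:t$. Consequently $b=(1-t)^{-1}$ and $a=-t(1-t)^{-1}$ are determined by $t$, and it suffices to prove that only finitely many values $t$ can arise from equations having three distinct solutions. (With a single nontrivial solution no relation among the data is forced, which is exactly why the threshold is three solutions rather than two.)

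Assume the equation has three pairwise distinct solutions $(1,1),(x_1,y_1),(x_2,y_2)$. The two nontrivial ones share the value $t$, so $\frac{y_1-1}{x_1-1}=\frac{y_2-1}{x_2-1}$, which after clearing denominators becomes the vanishing sum
\[
x_1y_2-x_2y_1-x_1+x_2+y_1-y_2=0.
\]
Replacing $\Gamma$ by $\Gamma':=\langle\Gamma,-1\rangle$, which has the same finite rank, all six summands lie in $\Gamma'$, and I would apply the finiteness theorem for unit equations in several unknowns (a consequence of Schmidt's Subspace Theorem): among the solutions of such an equation, those in which no proper subsum vanishes fall into finitely many classes under the diagonal scaling $(u_1,\dots,u_6)\mapsto(\lambda u_1,\dots,\lambda u_6)$. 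The crucial point is that our summands obey the monomial relations $x_1y_2=x_1\cdot y_2$ and $x_2y_1=x_2\cdot y_1$; if two of our six-tuples were proportional with ratio $\lambda$, comparing a linear entry with the quadratic entry built from it would give $\lambda^2=\lambda$, hence $\lambda=1$. Thus distinct scaling classes give distinct quadruples $(x_1,x_2,y_1,y_2)$, so the non-degenerate case yields only finitely many quadruples and \emph{a fortiori} finitely many values of $t$.

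The main obstacle is the degenerate solutions, in which some proper subsum of the six terms vanishes, since a priori these could form an infinite family producing infinitely many values of $t$. I would dispose of them by a finite case analysis over the possible vanishing subsums. Several are outright impossible under our hypotheses: for instance $x_1y_2-x_1=0$ forces $y_2=1$, while $x_1y_2-x_2y_1=0$ together with the complementary relation $x_2+y_1=x_1+y_2$ forces the multisets $\{x_1,y_2\}$ and $\{x_2,y_1\}$ to coincide, hence either $(x_1,y_1)=(x_2,y_2)$ or a diagonal solution $(c,c)$, which can only be $(1,1)$; all of these contradict the distinctness, nontriviality, or the conditions $x_i\neq 1$, $y_i\neq 1$. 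Each surviving vanishing pattern splits the relation into shorter unit equations carrying the same monomial constraints, to which the finiteness theorem applies again by induction on the number of terms, each contributing finitely many admissible $t$. Summing the finitely many contributions bounds the number of $t$, and hence the number of bad pairs $(a,b)$, which proves the lemma.
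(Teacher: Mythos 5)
Your proposal is essentially correct, but it takes a genuinely different route from the paper: the paper does not prove Lemma \ref{lem3.2} at all, it simply quotes it as (essentially) \cite[Thm.\ 1]{EGST88} (see also \cite[Thm.\ 6.1.6]{EG15}), whereas you give a direct, self-contained reduction. Your key observations are sound and well chosen: from $a(x-1)+b(y-1)=0$ every nontrivial solution determines $t=-a/b$, which in turn determines $(a,b)$ via $b=(1-t)^{-1}$; two distinct nontrivial solutions then yield the vanishing six-term sum $x_1y_2-x_2y_1-x_1+x_2+y_1-y_2=0$ with all terms in $\langle\Gamma,-1\rangle$; and the cute point that proportional six-tuples force $\lambda^2=\lambda$ correctly upgrades ``finitely many projective classes'' to ``finitely many quadruples'' in the non-degenerate case. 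This is close in spirit to how the cited result is actually proved, and it has the advantage of being tailored exactly to the normalization $a+b=1$, which removes the equivalence-class bookkeeping of \cite{EGST88}.

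The one place where you are too brisk is the blanket sentence disposing of the remaining vanishing subsums. The induction you invoke gives finiteness only of the \emph{ratios} of the terms in each shorter equation, and these ratios do not always directly determine $t$: for instance, for the pattern $x_1y_2+x_2=0$ the complementary four-term relation becomes $x_1y_1y_2-x_1+y_1-y_2=0$, whose ratios give finiteness of $y_1y_2$ and $y_1/y_2$, from which one must still extract $y_1^2$, hence $y_1$, before reaching $t$; the pattern $\{x_1y_2,x_2,y_1\}$ versus $\{-x_2y_1,-x_1,-y_2\}$ requires an even longer monomial manipulation (one ends up with $y_2^3$ lying in a finite set). Moreover, several sub-degenerations of these shorter equations produce genuine one-parameter families that are killed not by unit-equation finiteness but by your structural constraints (the two nontrivial solutions are distinct, $x_i\ne 1$, $y_i\ne 1$, and any diagonal solution $(c,c)$ must be $(1,1)$) --- e.g.\ the pattern $x_1y_2+x_2=0$, $x_2y_1+x_1=0$, $y_1-y_2=0$ forces $y_1=y_2=-1$ and $x_1=x_2$, leaving $x_1$ free, and is only excluded because the two solutions then coincide. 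I have checked that every pattern is eliminated by one of these two mechanisms, so your argument is completable, but a full write-up must run through the (roughly a dozen) partition types explicitly rather than appeal to a uniform induction.
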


\begin{proof}
This is essentially a result of Gy\H{o}ry, Stewart, Tijdeman, and the author \cite[Thm. 1]{EGST88}, see also \cite[Thm. 6.1.6]{EG15}. Their proof uses a finiteness result
for linear unit equations in several unknowns, which in turn follows from Schmidt's Subspace Theorem.
\end{proof}

We continue with the proof of part (i) of Proposition \ref{prop3.1}. Since \eqref{eq3.0} has three distinct solutions in $\OO_L^*$ including $(1,1)$ and $\OO_L^*$ is finitely generated, Lemma \ref{lem3.2} implies that if $\alpha$ runs through the $3$-special numbers of $K$,
then $\cross_{1234}(\alpha )$ runs through a finite set. 
If $(i,j,k,l)$ is a permutation of $(1,2,3,4)$, then $\cross_{ijkl}(\cdot )$
is a fractional linear transformation of $\cross_{1234}(\cdot )$,
hence $\cross_{ijkl}(\alpha )$ runs through a finite set as well. 
Now Lemma \ref{lem2.3}(ii) implies that the $3$-special numbers
$\alpha\in K$ lie in only finitely many $\GL_2(\Qq )$-equivalence classes.
\end{proof} 

\begin{proof}[Proof of part (ii) of Proposition \ref{prop3.1}]
Let $K$ be a number field of degree $n\geq 5$ 
such that the Galois group of its normal closure $L$
is $5$-transitive.
Take a special $\alpha\in K$. 
Choose $\beta$ such that $\Zz_{\beta}=\Zz_{\alpha}$. Recall that by
Lemma \ref{lem2.2}
\[
\ve_{ijkl}:=\frac{\cross_{ijkl}(\beta )}{\cross_{ijkl}(\alpha )}\in \OO_L^*
\]
for all pairwise distinct $i,j,k,l\in\{1\kdots n\}$.
Viewing \eqref{eq3.2} and \eqref{eq3.3} as linear equations in $\cross_{ijkl}(\alpha )$
and $\cross_{lijk}(\alpha )$ we derive from Cramer's rule,
\begin{equation}\label{eq3.4}
\cross_{ijkl}(\alpha )=\frac{\ve_{ilkj}-1}{\ve_{ilkj}-\ve_{ijkl}}.
\end{equation}
Our strategy is as follows. Using algebraic relations between the $\ve_{ijkl}$
and finiteness results for unit equations, 
we show that if $\alpha$ runs through the
special numbers of $K$, then one of the $\ve_{ijkl}$, say $\ve_{1234}$, runs through a finite set. Our assumption
that ${\rm Gal}(L/\Qq )$ is $5$-transitive implies that the numbers 
$\ve_{ijkl}$ are all
conjugate to one another, thus it follows that $\ve_{ijkl}$ runs through a finite set
for all $i,j,k,l$. But then, \eqref{eq3.4} implies that $\cross_{ijkl}(\alpha )$ runs through
a finite set for all $i,j,k,l$. Finally,
Lemma \ref{lem2.3}(ii) implies that the special numbers
$\alpha\in K$ lie in only finitely many $\GL_2(\Qq )$-equivalence classes.

We first collect some algebraic relations between the $\ve_{ijkl}$. 
It is straightforward to verify
\begin{equation}\label{eq3.5}
\left\{
\begin{array}{l}
\ve_{ijkl}=\ve_{jilk}=\ve_{klij}=\ve_{lkji},
\\[0.1cm]
\ve_{ijkl}^{-1}=\ve_{ikjl},
\\[0.1cm]
\frac{\ve_{ijkl}}{\ve_{ijlk}}=\ve_{ilkj}
\end{array}\right.
\end{equation}
for all pairwise distinct $i,j,k,l\in\{ 1\kdots n\}$ and moreover,
\begin{equation}\label{eq3.6}
\frac{\ve_{ijkl}}{\ve_{ijkm}}=\ve_{jmlk}
\end{equation}
for all pairwise distinct $i,j,k,l,m\in\{ 1\kdots n\}$. 

We derive a few more relations.
From \eqref{eq3.4} and \eqref{eq3.5} it follows that $\cross_{ijkl}(\beta )=\ve_{ijkl}\cross_{ijkl}(\alpha )=\frac{\ve_{ilkj}-1}{\ve_{iljk}-1}$.
Picking a fifth index $m$, we get
\[
1 =\frac{\cross_{jmlk}(\beta )\cross_{ijkm}(\beta )}{\cross_{ijkl}(\beta)}
\\
=\frac{\ve_{jklm}-1}{\ve_{jkml}-1}\cdot
\frac{\ve_{imkj}-1}{\ve_{imjk}-1}\cdot\frac{\ve_{iljk}-1}{\ve_{ilkj}-1}.
\]
We apply this with $(i,j,k,l,m)=(5,1,2,3,4)$. Thus, we obtain
\begin{equation}\label{eq3.8}
(\ve_{1234}-1)(\ve_{1245}-1)(\ve_{1253}-1)=(\ve_{1243}-1)(\ve_{1254}-1)(\ve_{1235}-1),
\end{equation}
where, as mentioned before, all entries belong to $\OO_L^*$.
We apply the following result.

\begin{lemma}\label{lem3.3}
Let $F$ be a field of characteristic $0$ and $\Gamma$ a subgroup of $F^*$
of finite rank. Consider the equation
\begin{align}\label{eq3.9}
(x_1-1)(x_2-1)(x_3-1)=\,&(y_1-1)(y_2-1)(y_3-1)
\\
\notag
&\text{  in } x_1,x_2,x_3,y_1,y_2,y_3\in\Gamma.
\end{align}
There is a finite subset $\SS$ of $\Gamma$ such that every solution of \eqref{eq3.9}
satisfies one of the following:
\begin{itemize}
\item[(a)] at least one of $x_1\kdots y_3$ belongs to $\SS$;
\item[(b)]
there are $s_1,s_2,s_3\in\{ \pm 1\}$ such that $(x_1,x_3,x_3)$ is a permutation of $(y_1^{s_1},y_2^{s_2},y_3^{s_3})$;
\item[(c)] at least one of the numbers in $\{ x_ix_j, x_i/x_j ,y_iy_j, y_i/y_j: 1\leq i<j\leq 3\}$
is either $-1$ or a primitive cube root of unity.
\end{itemize}
\end{lemma}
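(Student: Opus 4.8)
The plan is to treat equation \eqref{eq3.9} as an equation of the form $P=Q$ where $P=(x_1-1)(x_2-1)(x_3-1)$ and $Q=(y_1-1)(y_2-1)(y_3-1)$, and to expand both products so that the difference $P-Q=0$ becomes a linear relation among monomials in the variables $x_i,y_j$. Writing out $(x_1-1)(x_2-1)(x_3-1)=x_1x_2x_3-(x_1x_2+x_1x_3+x_2x_3)+(x_1+x_2+x_3)-1$, the equation $P-Q=0$ is a vanishing sum of $16$ terms, each of which is $\pm 1$ times a product of a subset of $\{x_1,x_2,x_3,y_1,y_2,y_3\}$ (the two constant terms $-1$ and $+1$ cancel, leaving $14$ terms). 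Every such monomial lies in the finitely generated group $\Gamma$, so we obtain a $\Gamma$-unit equation of the shape $\sum_{t} c_t u_t=0$ in a bounded number of unknowns $u_t\in\Gamma$, with coefficients $c_t\in\{\pm 1\}$. First I would invoke the finiteness theorem for unit equations in many variables (the multivariate analogue of Lemma \ref{lem3.2}, derived from the Subspace Theorem), which asserts that all solutions lie in finitely many \emph{proper vanishing subsums}, i.e. there are only finitely many values for the ratios of terms within each minimal vanishing subsum, the rest being forced to vanish.

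Next I would perform the combinatorial case analysis over the partitions of the $14$ monomials into minimal vanishing subsums. A minimal vanishing subsum of length $m\geq 2$ gives, after dividing by one of its terms, an honest $\Gamma$-unit equation $\sum_{s=1}^{m-1} c_s' w_s=1$ whose nondegenerate solutions are finite in number, forcing the corresponding ratios $w_s$ into a finite subset $\SS_0$ of $\Gamma$. Tracking which monomial ratios are so constrained yields, in each case, either that some individual $x_i$ or $y_j$ is pinned to a finite set — giving alternative (a) after enlarging $\SS$ — or that certain products and quotients $x_ix_j$, $x_i/x_j$, $y_iy_j$, $y_i/y_j$ are constrained. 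The subsums of length exactly $2$ are the source of the rigid outcomes: a cancellation $x_ix_j=y_ky_l$ or $x_i=y_j$ directly produces the permutation-with-signs structure recorded in alternative (b) (the signs $s_i\in\{\pm1\}$ account for whether a variable matches directly or via its inverse), while a self-cancellation such as $x_ix_j+1=0$ or a three-term degenerate relation forces a product/quotient to equal $-1$ or a primitive cube root of unity, which is exactly alternative (c).

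The main obstacle is the bookkeeping: the symmetric structure of \eqref{eq3.9} means there are many ways the $14$ terms can partition into vanishing subsums, and one must check that each partition leads into one of (a), (b), (c) rather than some unrecorded degenerate family. The key organizing principle is that the two \emph{extreme} monomials $x_1x_2x_3$ and $y_1y_2y_3$ (the unique degree-$3$ terms) must each be cancelled, and the only monomials available to cancel a degree-$3$ term are other degree-$3$, degree-$2$, or degree-$1$ terms; matching $x_1x_2x_3$ against $y_1y_2y_3$ leads toward (b), matching it against a degree-$2$ or degree-$1$ term (or forcing a short internal subsum) drives the quantities in (c) to roots of unity, and any residual freedom collapses one variable into a finite set, giving (a). I expect the only genuinely delicate point to be verifying that the finitely many ``sporadic'' solutions produced by the non-degenerate unit-equation solutions are absorbable into the finite set $\SS$ uniformly, which is immediate once one notes that all the constrained ratios and all the individual pinned values range over finite sets independent of the particular solution.
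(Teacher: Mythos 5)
First, a point of comparison: the paper does not prove this lemma at all; it cites it as \cite[Prop.\ 8.1]{BEG13}, whose proof is indeed along the lines you describe (expand the two products, view the resulting $14$-term identity as a unit equation over $\Gamma$, and analyse the minimal vanishing subsums via the finiteness theorem for unit equations in several unknowns). So your overall strategy is the correct one and matches the source.

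However, what you have written is a plan, not a proof, and the part you defer --- ``perform the combinatorial case analysis over the partitions of the $14$ monomials into minimal vanishing subsums'' --- is not bookkeeping; it is the entire content of the lemma. The statement does not merely assert finiteness outside a degenerate locus: it asserts that the degenerate locus is \emph{exactly} described by alternatives (b) and (c), with their very specific shape (a permutation of $(y_1^{s_1},y_2^{s_2},y_3^{s_3})$ with signs $s_i\in\{\pm 1\}$; products or quotients equal to $-1$ or to a primitive cube root of unity). None of this is derived in your sketch. In particular: (i) you never explain where the inverses $y_i^{-1}$ in (b) actually come from --- this requires using the identity $y-1=-y(y^{-1}-1)$ to renormalise factors, which changes the monomial structure of the expansion and hence the subsum analysis, and is not a consequence of simply ``matching $x_1x_2x_3$ against $y_1y_2y_3$''; (ii) the claim that a two-term cancellation such as $x_ix_j=y_ky_l$ ``directly produces the permutation-with-signs structure'' is false as stated --- knowing $x_ix_j=y_ky_l$ for one pair does not by itself force the factorwise matching required in (b), and propagating such relations through the remaining subsums is where the real work lies; (iii) the appearance of primitive cube roots of unity in (c) arises from specific degenerate configurations (e.g.\ three-term vanishing subsums with unit ratios), and you must verify that \emph{every} partition of the $14$ terms into vanishing subsums lands in (a), (b) or (c) --- a single unanticipated configuration would falsify the trichotomy. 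A minor further caveat: your phrase ``the finitely generated group $\Gamma$'' should read ``finite rank''; the unit-equation theorems you invoke do hold in that generality, so this is cosmetic, but the case analysis is not. As it stands, the proposal identifies the right tool and the right reduction but leaves the lemma unproved.
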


\begin{proof} 
This is a result of B\'{e}rczes, Gy\H{o}ry, and the author \cite[Prop. 8.1]{BEG13}.
They deduced 
the above lemma from a finiteness result
for linear unit equations in several unknowns, and so again Schmidt's Subspace Theorem
is at the background.
\end{proof}

We apply Lemma \ref{lem3.3} with $\Gamma =\OO_L^*$ to \eqref{eq3.8}. We show that each of the three cases (a), (b), (c) gives rise to only finitely many possible values for $\ve_{1234}$.
Recall that we assume that ${\rm Gal}(L/\Qq )$
is $5$-transitive.
Hence for any two quintuples of distinct indices $(i,j,k,l,m)$ and $(i',j',k',l',m')$,
there is $\sigma\in {\rm Gal}(L/\Qq )$ mapping $\alpha^{(i)},\beta^{(i)}$,$\ldots$,$\alpha^{(m)},\beta^{(m)}$
to $\alpha^{(i')},\beta^{(i')}$,$\ldots$,$\alpha^{(m')},\beta^{(m')}$, respectively.
Consequently, any two $\ve_{ijkl}$, $\ve_{i',j',k',l'}$ are conjugate to each other.
Similarly, 
from an identity between $\ve$-s with indices from a quintuple $(i,j,k,l,m)$
we can derive a similar identity with indices from $(i',j',k',l',m')$  
by applying a suitable element of ${\rm Gal}(L/\Qq )$. 

The above observations imply that
if we have shown that 
one of the $\ve_{ijkl}$ runs through  a finite set, then so does
$\ve_{1234}$. This settles case (a). As for (b) and (c),
using again the above observations, we are left with the following subcases.
Let $\TT$ denote the group of $6$-th roots of unity in $L$.
\\[0.2cm]
{\bf Case b1.} $\ve_{1234}=\ve_{1243}$.
\\
Then by \eqref{eq3.5}, $\ve_{1432}=\frac{\ve_{1234}}{\ve_{1243}}=1$, 
which by conjugacy implies $\ve_{1234}=1$.
\\[0.2cm]
{\bf Case b2.} $\ve_{1234}=\ve_{1243}^{-1}$.
\\
By \eqref{eq3.5}, $\ve_{1243}^{-1}=\ve_{1234}^{-1}\ve_{1432}$, so $\ve_{1234}^2=\ve_{1432}$.
By conjugacy, we may interchange the indices $2,3$, while keeping $1$ and $4$ fixed, 
so we have also $\ve_{1324}=\ve_{1342}^{-1}$. Applying again \eqref{eq3.5},
this gives $\ve_{1234}=\ve_{1432}^{-1}$. Hence $\ve_{1234}^3=1$.
\\[0.2cm]
{\bf Case b3.} $\ve_{1234}=\ve_{1254}$.
\\
By \eqref{eq3.5}, \eqref{eq3.6}, $1=\frac{\ve_{1234}}{\ve_{1254}}=\frac{\ve_{2143}}{\ve_{2145}}=\ve_{1534}$. By conjugacy, $\ve_{1234}=1$.
\\[0.2cm]
{\bf Case b4.} $\ve_{1234}=\ve_{1254}^{-1}$.
\\
By conjugacy, we may interchange $2$ and $3$, keeping $1,4,5$ fixed, so we have $\ve_{1324}=\ve_{1354}^{-1}$,
which together with \eqref{eq3.5} implies $\ve_{1234}=\ve_{1354}$.
From \eqref{eq3.5} and \eqref{eq3.6} we deduce $\frac{\ve_{1254}}{\ve_{1354}}=\ve_{1234}$.
Multiplying these relations together, we obtain $\ve_{1234}^3=1$.
\\[0.2cm]
{\bf Case c1.} $\ve_{1234}\cdot \ve_{1245}\in\TT$.
\\
By interchanging $3$ and $5$, keeping $1,2,4$ fixed, we see that $\ve_{1254}\cdot \ve_{1243}\in\TT$.
Using \eqref{eq3.5}, \eqref{eq3.6}, we get
\[
\frac{\ve_{1234}\cdot \ve_{1245}}{\ve_{1254}\cdot\ve_{1243}}=\frac{\ve_{1534}}{\ve_{2534}}=\ve_{1532}\in \TT
\]
and by conjugacy, $\ve_{1234}\in\TT$.
\\[0.2cm]
{\bf Case c2.} $\frac{\ve_{1234}}{\ve_{1245}}\in\TT$.
\\
Interchanging $2$ and $3$, keeping $1,4,5$ fixed, we obtain 
$\frac{\ve_{1324}}{\ve_{1345}}\in\TT$, and then, using $\ve_{1324}=\ve_{1234}^{-1}$,
$\ve_{1245}\cdot\ve_{1345}\in\TT$. 
By taking conjugates, we get $\ve_{1234}\cdot\ve_{1235}\in\TT$,
and also $\ve_{1234}\cdot\ve_{5234}\in\TT$. Applying \eqref{eq3.6},
the latter yields $\ve_{1234}\cdot \frac{\ve_{1234}}{\ve_{1235}}\in\TT$.
Hence $\ve_{1234}^3\in\TT$.
\\[0.2cm]

As mentioned above, this completes the proof of Proposition \ref{prop3.1}.
\end{proof}

\section{Investigation of $\GL_2(\vk )$-classes}\label{section4}

Let $K$ be a number field of degree $\geq 4$. In the next section we show (Proposition \ref{prop5.1})
that each $\GL_2(\Qq )$-equivalence class of special numbers in $K$ is the union 
of finitely many $\GL_2(\Zz )$-equivalence classes. Together with Proposition \ref{prop3.1}
this will imply Theorem \ref{thm1.1}. In the present section, we develop some
machinery needed for the proof of Proposition \ref{prop5.1}.
We have worked out this machinery for arbitrary principal ideal domains 
of characteristic $0$ so that we can use it also in Section \ref{section6}
where we will prove a generalization of Theorem \ref{thm1.1} over
rings of $S$-integers of number fields. 

Let $A$ be a principal ideal domain of characteristic $0$, $\vk$ its field of fractions,
$K$ an extension of $\vk$ of degree $n\geq 4$, and $L$ the normal closure of $K$ over $\vk$.
We consider so-called \emph{special pairs} in $K$, i.e., pairs
$(\alpha ,\beta )$ such that $\vk (\alpha )=\vk (\beta )=K$, $A_{\alpha}=A_{\beta}$ 
and $\alpha ,\beta$
are $\GL_2(A)$-inequivalent. 
Two special pairs $(\alpha ,\beta )$ and $(\alpha^*,\beta^*)$
are called $\GL_2(\vk )$-equivalent if $\alpha^*$ is $\GL_2(\vk )$-equivalent to $\alpha$
and $\beta^*$ is $\GL_2(\vk )$-equivalent to $\beta$.

Let $(\alpha ,\beta )$, $(\alpha^*,\beta^*)$ be two $\GL_2(\vk )$-equivalent
special pairs. Then since we are working over a principal ideal domain $A$,
\begin{equation}\label{eq4.1}
\alpha^*=C\alpha,\ \ \beta^*=C'\beta ,
\end{equation}
where $C=\begin{pmatrix}a&b\\c&d\end{pmatrix}$, $C'=\begin{pmatrix}a'&b'\\c'&d'\end{pmatrix}$,
with
\begin{align*}
&a,b,c,d\in A,\ \ \gcd(a,b,c,d)=1,\ \ \Delta :=ad-bc\not= 0,
\\
&a',b',c',d'\in A,\ \ \gcd (a',b',c',d')=1,\ \ \ \Delta':=a'd'-b'c'\not= 0.
\end{align*}
Recall that by Lemma \ref{lem2.2}
we have $\frac{\cross_{ijkl}(\beta )}{\cross_{ijkl}(\alpha )}\in A_L^*$
for all pairwise distinct $i,j,k,l\in\{ 1\kdots n\}$.

\begin{proposition}\label{prop4.1}
Let $\fd$ be the discriminant ideal of $A_{\alpha}$, 
and let $\fa (\alpha ,\beta )$ denote the ideal of $A_L$ generated by all numbers
$\frac{\cross_{ijkl}(\beta )}{\cross_{ijkl}(\alpha )}-1$ for all pairwise distinct $i,j,k,l\in\{ 1\kdots n\}$. Then  
\begin{equation}\label{eq4.0}
\Delta A_L\supseteq
\fd^{5}\cdot\fa (\alpha ,\beta )^{2}.
\end{equation}
\end{proposition}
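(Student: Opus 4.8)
The plan is to prove the equivalent local statement that $\ordfp(\Delta)\le 5\,\ordfp(\fd)+2\,\ordfp(\fa(\alpha,\beta))$ for every prime ideal $\fp$ of $A_L$, since the asserted containment \eqref{eq4.0} of $A_L$-ideals is exactly this family of valuation inequalities. First I would exploit the freedom to replace each of $\alpha,\beta,\alpha^*,\beta^*$ by a $\GL_2(A)$-equivalent number: this leaves the order $\OO:=A_{\alpha}=A_{\beta}$, its discriminant ideal $\fd$, and all cross ratios (hence $\fa(\alpha,\beta)$) unchanged, and alters $\Delta,\Delta'$ only by units of $A$. Because $C,C'$ are primitive over the principal ideal domain $A$, Smith Normal Form lets me arrange $C=\big(\begin{smallmatrix}1&0\\0&\Delta\end{smallmatrix}\big)$ and $C'=\big(\begin{smallmatrix}1&0\\0&\Delta'\end{smallmatrix}\big)$, so that after normalization $\alpha^*=\alpha/\Delta$ and $\beta^*=\beta/\Delta'$.

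Next I would make the two orders explicit. Writing $f_{\alpha}=a_0X^n+\cdots+a_n$ and $\omega_i=a_0\alpha^i+\cdots+a_{i-1}\alpha$ as in Lemma~\ref{lem2.0}, a direct computation with the primitive minimal polynomial of $\alpha/\Delta$ produces $g:=\gcd_{0\le k\le n}(a_k\Delta^{\,n-k})$ and
\[
A_{\alpha^*}=A\cdot 1+\sum_{i=1}^{n-1}A\cdot\frac{\Delta^{\,n-i}}{g}\,\omega_i,
\]
and symmetrically $A_{\beta^*}=A\cdot 1+\sum_i A\cdot\frac{\Delta'^{\,n-i}}{g'}\,\omega_i'$, with $g'=\gcd_k(b_k\Delta'^{\,n-k})$ and $\omega_i'$ built from $f_{\beta}=b_0X^n+\cdots+b_n$. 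Thus the hypothesis $A_{\alpha^*}=A_{\beta^*}$ becomes an equality of two explicitly diagonal-scaled sublattices of the \emph{fixed} order $\OO$, written in its two $A$-bases $\{1,\omega_1,\dots,\omega_{n-1}\}$ and $\{1,\omega_1',\dots,\omega_{n-1}'\}$.

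The heart of the argument is to turn this lattice equality into the bound. Let $T\in\GL_{n-1}(A)$ be the base-change matrix expressing $\omega_i'$ through $\omega_1,\dots,\omega_{n-1}$; crucially $T$ depends only on $(\alpha,\beta)$ and not on $\Delta$. Comparing the two descriptions of the same lattice at $\fp$ forces inequalities of the form $(n-i)\,\ordfp(\Delta')-\ordfp(g')+\ordfp(T_{ij})\ge (n-j)\,\ordfp(\Delta)-\ordfp(g)$, together with the reverse inequalities coming from $T^{-1}$. Since $T$ is fixed while the weights $(n-i)\,\ordfp(\Delta)$ grow linearly in $\ordfp(\Delta)$, these can be satisfied for large $\ordfp(\Delta)$ only if the relevant entries of $T$ and $T^{-1}$ are highly divisible by $\fp$. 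The remaining task is to identify that divisibility: passing through the conjugate embeddings, the misalignment of the two bases is governed by the cross-ratio units $\ve_{ijkl}=\cross_{ijkl}(\beta)/\cross_{ijkl}(\alpha)$ via \eqref{eq3.4}, so the offending entries are controlled by $\ordfp(\ve_{ijkl}-1)\ge\ordfp(\fa(\alpha,\beta))$, while $\ordfp(g)$ and the normalizing factors are controlled by the ideals $\fd_{ij}(\OO)$ of Lemma~\ref{lem2.1} (a short computation gives $\prod_{i<j}\fd_{ij}(\OO)^2=\fd\,A_L$ by Lemmas~\ref{lem2.0} and \ref{lem2.-1}). Assembling these estimates should yield $\ordfp(\Delta)\le 5\,\ordfp(\fd)+2\,\ordfp(\fa(\alpha,\beta))$.

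I expect this last step to be the main obstacle: matching the lattice-equality inequalities to the invariants $\fd$ and $\fa$ through the conjugate embeddings, and in particular extracting the precise exponents $5$ and $2$, demands careful bookkeeping. Two points need special care: the degenerate case in which the weights $(n-i)\,\ordfp(\Delta)-\ordfp(g)$ collide, so that $T\bmod\fp$ may mix equal weights and triangularity is not forced; and the primes $\fp$ at which $\ordfp(\fd)=\ordfp(\fa)=0$, where one must show directly that $\Delta$ is a $\fp$-adic unit. It is only here that the equality of the two orders is used in its full module-theoretic strength: the weaker equality of all the ideals $\fd_{ij}(\OO)$ is insufficient, since it pins down the cross ratios of $\alpha,\beta$ only up to units in $A_L^*$ and never sees the difference $\ve_{ijkl}-1$ that produces the factor $\fa^{2}$.
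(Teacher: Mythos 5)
Your strategy coincides with the paper's: you localize \eqref{eq4.0} to the valuation inequality $v(\Delta)\le 5\delta_v+2\,v\bigl(\tfrac{\cross_{ijkl}(\beta)}{\cross_{ijkl}(\alpha)}-1\bigr)$ at each place $v$ of $L$, use Smith Normal Form to reduce to $\alpha^*,\beta^*$ being scalar multiples of $\alpha,\beta$, write the two scaled bases of the common order $A_{\alpha^*}=A_{\beta^*}$ in terms of the bases $\{1,\omega_i\}$, $\{1,\rho_i\}$ of $A_{\alpha}=A_{\beta}$, and compare them to force divisibility of the base-change matrix: your $T$ is the paper's $M$, and your inequalities are exactly how the paper derives $[\Delta]=[\Delta']$, $\Delta^{j-i}\mid m_{ij}$ for $j>i$, and $\gcd(m_{ii},\Delta)=1$ (Lemma \ref{lem4.2}). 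Up to this point the proposal is correct and essentially identical to the paper. (One of your side worries is moot: at primes with $\delta_v=v(\fa)=0$ the general inequality already forces $v(\Delta)\le 0$, so no separate argument is needed there; and the paper's route to \eqref{eq4.9} via the identity $t_{ij}=\mu\Delta'^{i-n}m_{ij}=\lambda\Delta^{j-n}m_{ij}^*$ avoids any issue of colliding weights.)

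The genuine gap is the step you yourself flag as "the main obstacle": converting the divisibility of $M$ into the lower bound $v(\ve_{ijkl}-1)\ge\half v(\Delta)-\smallfrac{5}{2}\delta_v$. This is not bookkeeping but the heart of the proof (the paper's Lemma \ref{lem4.3}), and your sketch omits all of its substance. Concretely, one first extracts from \eqref{eq4.5} and \eqref{eq4.9} the congruence $b_0(\beta^{(i)}-\beta^{(j)})\equiv m_{1,1}a_0(\alpha^{(i)}-\alpha^{(j)})\ (\mathrm{mod}\ \Delta)$, and the argument then splits according to whether $v(a_0)\le\half v(\Delta)$ or not. In the first case the discriminant relation \eqref{eq4.14} bounds $v\bigl(m_{1,1}a_0(\alpha^{(i)}-\alpha^{(j)})\bigr)$ by $\half v(\Delta)+\half\delta_v$ and the claim follows. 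In the second case this congruence is useless for pairs $i,j\ge 2$ (both sides are too divisible by $\fp$), and one needs several further inputs that do not appear in your outline: the relation $[a_0,\Delta]=[b_0,\Delta]$ coming from the multiplication table of the order; the bounds $v(a_1),v(b_1)\le\delta_v$ obtained from a polynomial identity for the discriminant; the identification of the unique conjugate of $\alpha$ (and, after a matching argument, the \emph{same} index for $\beta$) with small valuation; and a second congruence built from $\rho_2$ and $m_{2,2}$ to handle the pairs $i,j\ge 2$, which is where the weaker constant $\smallfrac{5}{2}\delta_v$, hence the exponent $5$ in \eqref{eq4.0}, originates. Without this two-case valuation analysis the exponents $5$ and $2$ cannot be extracted, so as it stands the proposal establishes the framework of the proof but not the proposition itself.
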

\noindent
Recall that by Lemmas \ref{lem2.3} and \ref{lem2.4}, the ideal $\fa (\alpha ,\beta )$
is not zero. We mention that our proof implies also that
$\Delta/\Delta'\in A^*$, but this will not be needed.

We start with some preparations and then prove two lemmas, which together imply
Proposition \ref{prop4.1}.

Let $C,C'$ be the matrices from \eqref{eq4.1}.
Since $A$ is a principal ideal domain,
there are matrices $U,V,U',V'\in\GL_2(A)$ such that 
\[
UCV=\begin{pmatrix}\Delta&0\\ 0&1\end{pmatrix},\ \ U'C'V'=\begin{pmatrix}\Delta'&0\\ 0&1\end{pmatrix}.
\]
Put $\alpha_1:= V^{-1}\alpha $, $\alpha_1^*:=U\alpha^*$, $\beta_1:=V'^{-1}\beta$, 
$\beta_1^*:=U'\beta$. Then $\alpha_1^*=\Delta\alpha_1$, $\beta_1^*=\Delta'\beta_1$,
$A_{\alpha_1}=A_{\beta_1}$, $A_{\alpha_1^*}=A_{\beta_1^*}$,
$(\alpha_1 ,\beta_1)$, $(\alpha_1^*,\beta_1^*)$ are $\GL_2(\vk )$-equivalent
special pairs, and $\fa (\alpha_1,\beta_1)=\fa (\alpha ,\beta )$. 
So in the proof of Proposition \ref{prop4.1} we may
replace $\alpha$, $\alpha^*$, $\beta$, $\beta^*$ by $\alpha_1$, $\alpha_1^*$,
$\beta_1$, $\beta_1^*$, in other words, without loss of generality we may assume
\begin{equation}\label{eq4.2}
\alpha^*=\Delta\alpha,\ \ \beta^*=\Delta'\beta .
\end{equation}

So assume \eqref{eq4.2}. Let
\[
f_{\alpha}=a_0X^n+\cdots +a_n,\ \ f_{\beta}=b_0X^n+\cdots +b_n
\]
be primitive minimal polynomials of $\alpha ,\beta$. 
By Lemma \ref{lem2.0}, the ring $A_{\alpha} = A_{\beta}$ has $A$-module bases
\[
\{1,\omega_1\kdots \omega_{n-1}\},\ \ \{ 1,\rho_1\kdots \rho_{n-1}\}
\]
respectively, where 
\begin{equation}\label{eq4.4}
\omega_i=\sum_{j=0}^{i-1} a_j\alpha^{i-j},\ \ \rho_i=\sum_{j=0}^{i-1} b_j\beta^{i-j}
\ \ (i=1\kdots n-1).
\end{equation}
Hence there are a matrix $M=(m_{ij})_{i,j=1\kdots n-1}\in\GL_{n-1}(A)$
and $m_{i,0}\in A$ ($i=1\kdots n-1$) such that
\begin{equation}\label{eq4.5}
\rho_i=m_{i,0}+\sum_{j=1}^{n-1} m_{ij}\omega_j\ \ \text{for } i=1\kdots n-1.
\end{equation}
Let us write $[\ldots ]$ for the fractional ideal of $A$ generated by the
elements between the brackets.

\begin{lemma}\label{lem4.2}
The following holds:
\begin{align}
\label{eq4.-1}
&[\Delta ]=[\Delta'],
\\
\label{eq4.9}
&\mod{m_{ij}}{0}{\Delta^{j-i}}\ \ \text{for } i=1\kdots n-1,\, j>i,
\\
\label{eq4.10}
&\gcd (m_{ii},\Delta )=1\ \ \text{for } i=1\kdots n-1,
\\
\label{eq4.11}
&[a_0,\Delta ]=[b_0,\Delta ].
\end{align}
\end{lemma}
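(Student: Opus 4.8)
The plan is to localize over $A$ and to compare the change-of-basis matrix \eqref{eq4.5} of the order $A_{\alpha}=A_{\beta}$ with its analogue for the starred pair. Since $\alpha^*=\Delta\alpha$ and $\beta^*=\Delta'\beta$, the number $\alpha^*$ is a root of $a_0X^n+a_1\Delta X^{n-1}+\cdots+a_n\Delta^n$; I write $\kappa:=\gcd(a_0,a_1\Delta\kdots a_n\Delta^n)$ for its content, so that $f_{\alpha^*}$ has leading coefficient $a_0/\kappa$, and I define $\kappa'$ analogously from $f_{\beta}$ and $\Delta'$. A direct computation with \eqref{eq4.4} gives $\omega_i^*=(\Delta^i/\kappa)\omega_i$ and $\rho_i^*=((\Delta')^i/\kappa')\rho_i$ for the basis elements attached to $\alpha^*,\beta^*$. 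Substituting \eqref{eq4.5} together with $\omega_j=(\kappa/\Delta^j)\omega_j^*$, I find that the matrix $M^*=(m_{ij}^*)$ expressing the $\rho_i^*$ in the $\omega_j^*$ has entries
\begin{equation*}
m_{ij}^*=\frac{(\Delta')^i\kappa}{\kappa'\,\Delta^j}\,m_{ij}.
\end{equation*}
As $\{1,\omega_1^*\kdots\omega_{n-1}^*\}$ and $\{1,\rho_1^*\kdots\rho_{n-1}^*\}$ are both $A$-bases of $A_{\alpha^*}=A_{\beta^*}$, both $M=(m_{ij})$ and $M^*$ lie in $\GL_{n-1}(A)$.

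Put $T:=\tfrac12 n(n-1)=\sum_{i=1}^{n-1} i$. Since $\sum_i\sigma(i)=T$ for every permutation $\sigma$ of $\{1\kdots n-1\}$, the factor $\prod_i(\Delta')^i\kappa/(\kappa'\Delta^{\sigma(i)})$ is independent of $\sigma$, so expanding determinants gives $\det M^*=\big((\Delta')^T\kappa^{n-1}/((\kappa')^{n-1}\Delta^T)\big)\det M$. Because $\det M,\det M^*\in A^*$, this produces, at every prime $p$ of $A$, the single relation
\begin{equation}\label{eqDrel}
T\,\ordp(\Delta')+(n-1)\ordp(\kappa)=T\,\ordp(\Delta)+(n-1)\ordp(\kappa').
\end{equation}

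The crux is \eqref{eq4.-1}, and this is the step I expect to be the main obstacle. Fix $p$ and set $e=\ordp(\Delta)$, $e'=\ordp(\Delta')$. Since $M^*\in\GL_{n-1}(A)$ has entries in $A$ and unit determinant, some term of its Leibniz expansion is a $p$-adic unit: there is a permutation $\tau$ with $\ordp(m^*_{i\tau(i)})=0$ for all $i$. Rewriting this via the formula for $m^*_{ij}$, summing over $i$ and invoking \eqref{eqDrel} (so that a sum of nonnegative integers vanishes) forces each summand to vanish, i.e.
\begin{equation*}
\tau(i)\,e=i\,e'+\big(\ordp(\kappa)-\ordp(\kappa')\big)\qquad (i=1\kdots n-1).
\end{equation*}
Subtracting the $i=1$ case yields $(\tau(i)-\tau(1))e=(i-1)e'$. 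If $e=0$ this gives $e'=0$; if $e\ge 1$ it gives $e\mid e'$, and writing $e'=ke$ the prescription $\tau(i)=\tau(1)+(i-1)k$ defines a permutation of $\{1\kdots n-1\}$ only for $k=1$ (since $n\ge 4$ rules out both $k=0$ and $k\ge 2$). In all cases $e=e'$, which is \eqref{eq4.-1}; substituting back into \eqref{eqDrel} gives $\ordp(\kappa)=\ordp(\kappa')$ for all $p$, i.e. $[\kappa]=[\kappa']$.

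The remaining identities are then bookkeeping. With $[\Delta]=[\Delta']$ and $[\kappa]=[\kappa']$ the formula for $m^*_{ij}$ shows $m^*_{ij}$ is a unit times $\Delta^{i-j}m_{ij}$; membership $m^*_{ij}\in A$ forces $\Delta^{j-i}\mid m_{ij}$ for $j>i$, giving \eqref{eq4.9}. Hence $M$ is lower triangular modulo $\Delta$, so $\det M\equiv\prod_i m_{ii}\pmod{\Delta}$; as $\det M\in A^*$ the product is a unit modulo $\Delta$, whence $\gcd(m_{ii},\Delta)=1$, which is \eqref{eq4.10}. For \eqref{eq4.11} I use $\ordp(\kappa)=\min_{0\le j\le n}(\ordp(a_j)+je)$: since every term with $j\ge 1$ is at least $e$, one gets $\min(\ordp(\kappa),e)=\min(\ordp(a_0),e)$, and likewise $\min(\ordp(\kappa'),e)=\min(\ordp(b_0),e)$; combining with $\ordp(\kappa)=\ordp(\kappa')$ and $e=\ordp(\Delta)=\ordp(\Delta')$ yields $\min(\ordp(a_0),\ordp(\Delta))=\min(\ordp(b_0),\ordp(\Delta))$ at every $p$, i.e. \eqref{eq4.11}. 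The whole argument hinges on the rigidity forced by \eqref{eqDrel} together with the permutation $\tau$; once $e=e'$ is secured, everything else drops out by divisibility bookkeeping.
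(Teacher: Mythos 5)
Your proof is correct, and while it shares the paper's overall skeleton (rescale so that $\alpha^*=\Delta\alpha$, $\beta^*=\Delta'\beta$, form the two change-of-basis matrices $M,M^*\in\GL_{n-1}(A)$, and exploit the scaling relation $m_{ij}^*=\frac{(\Delta')^i\kappa}{\kappa'\Delta^j}\,m_{ij}$, which is exactly the paper's relation \eqref{eq4.8} under the identification $\lambda=\Delta^n/\kappa$, $\mu=\Delta'^n/\kappa'$), the two pivotal identities are obtained by genuinely different arguments. For \eqref{eq4.-1} the paper computes the fractional ideal generated by \emph{all} entries of $T=(t_{ij})$ in two ways --- row by row using that the rows of $M$ have gcd $1$, and column by column using $M^*\in\GL_{n-1}(A)$ --- which, combined with the determinant identity, gives two linear relations in $\ordp(\lambda)-\ordp(\mu)$ and $\ordp(\Delta)-\ordp(\Delta')$ that force both to vanish; this is shorter than your route. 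You instead use only the determinant relation plus the existence of a ``unit permutation'' $\tau$ in the Leibniz expansion of $\det M^*$, and then a rigidity argument showing that $\tau(i)=\tau(1)+(i-1)k$ can only be a permutation of $\{1\kdots n-1\}$ when $k=1$; this is more intricate but works, and as a by-product you get $[\kappa]=[\kappa']$ explicitly. For \eqref{eq4.11} the divergence is more substantial: the paper uses the ring structure, namely $\omega_1^2=a_1\omega_1-a_0\omega_2$ and $\rho_1^2=b_1\rho_1-b_0\rho_2$ together with the congruences \eqref{eq4.9}, to derive $\mod{b_0m_{2,2}}{a_0m_{1,1}^2}{\Delta}$ and then invokes \eqref{eq4.10}; you avoid the multiplication table entirely by reading off $\ordp(\kappa)=\min_j(\ordp(a_j)+j\,\ordp(\Delta))$ from the content of the rescaled polynomial, whence $\min(\ordp(\kappa),\ordp(\Delta))=\min(\ordp(a_0),\ordp(\Delta))$, and concluding from $[\kappa]=[\kappa']$, $[\Delta]=[\Delta']$. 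Your version of \eqref{eq4.11} is arguably cleaner and more self-contained; the paper's versions of \eqref{eq4.-1} is the more economical of the two. All remaining steps (\eqref{eq4.9} from $m_{ij}^*\in A$, and \eqref{eq4.10} from lower-triangularity of $M$ modulo $\Delta$) coincide with the paper's.
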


\begin{proof}
By \eqref{eq4.2}, there are non-zero $\lambda ,\mu\in\vk$, 
such that $\alpha^*$, $\beta^*$ have primitive minimal polynomials
\begin{equation}\label{eq4.3}
\begin{split}
f_{\alpha^*} &=\lambda (a_0\Delta^{-n}X^n +a_1\Delta^{1-n}X^{n-1}+\cdots +a_n),
\\ 
f_{\beta^*} &=\mu (b_0\Delta'^{-n}X^n+b_1\Delta'^{1-n}X^{n-1}+\cdots +b_n).
\end{split}
\end{equation}
From \eqref{eq4.2}, \eqref{eq4.3} it follows that $A_{\alpha^*}=A_{\beta^*}$ has $A$-module bases
\begin{equation}\label{eq4.6}
\{ 1, \lambda\Delta^{1-n}\omega_1\kdots \lambda\Delta^{-1}\omega_{n-1}\},\ \ 
\{ 1, \mu\Delta'^{1-n}\rho_1\kdots \mu\Delta'^{-1}\rho_{n-1}\}.
\end{equation}
Hence there are $M^*=(m_{ij}^*)_{i,j=1\kdots n-1}\in\GL_{n-1}(A)$ and $m_{i,0}^*\in A$
($i=1\kdots n-1$) such that
\[
\mu\Delta'^{i-n}\rho_i=m_{i,0}^*+\sum_{j=1}^{n-1} m_{ij}^*\lambda\Delta^{j-n}\omega_j\ \ \text{for } i=1\kdots n-1.
\]
A comparison with \eqref{eq4.5} gives
\[
\mu\Delta'^{i-n}\rho_i=t_{i,0}+\sum_{j=1}^{n-1} t_{ij}\omega_j\ \ (i=1\kdots n),
\]
where 
\begin{equation}\label{eq4.8}
t_{ij}=\mu\Delta'^{i-n}m_{ij}=\lambda\Delta^{j-n}m_{ij}^*\ \ (i,j=1\kdots n-1).
\end{equation}
Since $M=(m_{ij})\in\GL_{n-1}(A)$, the entries of each row of $M$ have gcd $1$.
It follows that the fractional ideal generated by the entries 
of the $i$-th row of $T=(t_{ij})_{i,j=1\kdots n-1}$ is
$[\mu\Delta'^{i-n}]$. 
Hence the fractional ideal generated by all entries of $T$ is 
$[\mu\Delta'^{1-n}]$. Similarly, since also $M^*\in\GL_{n-1}(A)$, 
the fractional ideal generated by the entries of the $j$-th column of $T$ is $[\lambda \Delta^{j-n}]$.
Hence the fractional ideal generated by all entries of $T$ is $[\lambda\Delta^{1-n}]$.
So $[\lambda\Delta^{1-n}]=[\mu\Delta'^{1-n}]$. On the other hand,
using $\det M\in A^*$, $\det M^*\in A^*$, we find
$[\det T]=[\lambda^{n-1}\Delta^{-n(n-1)/2}]=[\mu^{n-1}\Delta'^{-n(n-1)/2}]$.
By combining these two identities, using that $n\geq 4$, we obtain 
\[
[\lambda ]=[\mu ],\ \ [\Delta ]=[\Delta '].
\]
This proves \eqref{eq4.-1}.
Further, by \eqref{eq4.8},
\[
[m_{ij}]=[\Delta^{j-i}m_{ij}^*],
\]
and since $m_{ij}^*\in A$ this implies \eqref{eq4.9}.
Combining \eqref{eq4.9} with $\det M\in A^*$ we obtain \eqref{eq4.10}.

It remains to prove \eqref{eq4.11}.
Note that by \eqref{eq4.4} we have
\[
\omega_1^2=a_1\omega_1-a_0\omega_2,\ \ \rho_1^2=b_1\rho_1-b_0\rho_2.
\]
Substituting \eqref{eq4.5} and using the congruences \eqref{eq4.9}, we obtain
the following congruences modulo $\Delta A_{\alpha}$:
\begin{align*}
&b_1(m_{1,0}+m_{1,1}\omega_1)-b_0(m_{2,0}+m_{2,1}\omega_1+m_{2,2}\omega_2)
\\
&\equiv (m_{1,0}+m_{1,1}\omega_1 )^2\equiv m_{1,0}^2+2m_{1,0}m_{1,1}\omega_1+m_{1,1}^2\omega_1^2
\\
&\equiv m_{1,0}^2+2m_{1,0}m_{1,1}\omega_1+m_{1,1}^2(a_1\omega_1-a_0\omega_2)
\\
&\equiv m_{1,0}^2+(2m_{1,0}m_{1,1}+m_{1,1}^2a_1)\omega_1-m_{1,1}^2a_0\omega_2\ (\text{mod}\,\Delta A_{\alpha}).
\end{align*}
Comparing the coefficients of $\omega_2$, we see that $\mod{b_0m_{2,2}}{a_0m_{1,1}^2}{\Delta}$.
Combined with \eqref{eq4.10}, this gives \eqref{eq4.11}.
\end{proof}

For the remainder of the proof of \eqref{eq4.0} it will be convenient to work
locally. Let $\VV_L$ be the set of discrete valuations on $L$ corresponding 
to the non-zero prime ideals of $A_L$, i.e., $v\in\VV_L$ corresponds to the 
prime ideal $\fp$ if $v(x)$ is the exponent of $\fp$
in the unique prime ideal decomposition of $[x]$.
Further, put $\delta_v:=v(\fd )=\min\{ v(x):\, x\in\fd\}$.

\begin{lemma}\label{lem4.3}
Let $v\in\VV_L$. 
Then for all pairwise distinct $i,j,k,l\in\{ 1\kdots n\}$ we have
\begin{equation}\label{eq4.12}
v(\Delta )\leq 5\delta_v + 2\cdot v\Big(\frac{\cross_{ijkl}(\beta )}{\cross_{ijkl}(\alpha )}-1\Big).
\end{equation}
\end{lemma}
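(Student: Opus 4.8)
The plan is to work one valuation at a time: fix $v\in\VV_L$ and a quadruple of distinct indices $(i,j,k,l)$, and prove \eqref{eq4.12}. Since the right-hand side is non-negative and $v(\Delta)\ge 0$, I may assume $v(\Delta)>0$; then the prime of $A$ below $v$ divides $\Delta$, so by \eqref{eq4.10} we have $v(m_{11})=0$, a fact used repeatedly below. The engine of the proof is a near-proportionality between the conjugate differences of $\beta$ and those of $\alpha$. Applying the embeddings indexed by $p$ and $q$ to the relation \eqref{eq4.5} with $i=1$ and subtracting (the constant $m_{1,0}$ cancels and the $m_{1s}\in A$ are fixed by the embeddings), and using $\omega_1=a_0\alpha$, $\rho_1=b_0\beta$ from \eqref{eq4.4}, I obtain
\[
b_0(\beta^{(p)}-\beta^{(q)})=m_{11}a_0(\alpha^{(p)}-\alpha^{(q)})+E_{pq},\qquad E_{pq}:=\sum_{s=2}^{n-1}m_{1s}(\omega_s^{(p)}-\omega_s^{(q)}).
\]
Because $\omega_s^{(p)}-\omega_s^{(q)}\in\fd_{pq}(A_\alpha)$ and $m_{1s}\in\Delta A$ for $s\ge 2$ by \eqref{eq4.9}, each error satisfies $E_{pq}\in\Delta\,\fd_{pq}(A_\alpha)$.

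Next I feed this into the cross ratios. Writing $W_{pq}:=a_0(\alpha^{(p)}-\alpha^{(q)})$ and $R_{pq}:=b_0(\beta^{(p)}-\beta^{(q)})$, the constants cancel in the quotients, so $\cross_{ijkl}(\alpha)=W_{ij}W_{kl}/(W_{ik}W_{jl})$ and likewise for $\beta$ with $R$, whence
\[
\frac{\cross_{ijkl}(\beta)}{\cross_{ijkl}(\alpha)}-1=\frac{R_{ij}R_{kl}W_{ik}W_{jl}-R_{ik}R_{jl}W_{ij}W_{kl}}{R_{ik}R_{jl}W_{ij}W_{kl}}.
\]
Substituting $R_{pq}=m_{11}W_{pq}+E_{pq}$, the $m_{11}^2$-terms in the numerator cancel identically — this is cross-ratio invariance under the common scaling $m_{11}$ — so every surviving term contains at least one factor $E_{pq}$ and is therefore divisible by $\Delta$. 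Thus the numerator lies in $\Delta$ times an explicit combination of the $W_{pq},E_{pq}$, which is the source of the term $v(\Delta)$ in \eqref{eq4.12}.

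It remains to estimate valuations. Here I would use Lemma \ref{lem2.1} in the shape $v(\alpha^{(p)}-\alpha^{(q)})=v([1,\alpha^{(p)}])+v([1,\alpha^{(q)}])+v(\fd_{pq}(A_\alpha))$ (and the analogue for $\beta$, with the same $\fd_{pq}$ since $A_\alpha=A_\beta$), together with the discriminant identity
\[
\delta_v=2\sum_{p<q}v(\fd_{pq}(A_\alpha)),
\]
which follows from Lemma \ref{lem2.1} and the primitivity of $f_\alpha=a_0\prod_p(X-\alpha^{(p)})$ via $v(a_0)+\sum_p v([1,\alpha^{(p)}])=0$. Combining $v(E_{pq})\ge v(\Delta)+v(\fd_{pq}(A_\alpha))$ with these identities in the numerator and denominator above should yield a lower bound of the form $v\bigl(\cross_{ijkl}(\beta)/\cross_{ijkl}(\alpha)-1\bigr)\ge\tfrac12\bigl(v(\Delta)-5\delta_v\bigr)$, which is \eqref{eq4.12}.

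The hard part is the bookkeeping of the non-integrality factors $a_0,b_0$, equivalently the ideals $[1,\alpha^{(p)}]$ and $[1,\beta^{(p)}]$: these are not bounded by $\delta_v$, and a single conjugate of very large denominator can otherwise swamp the estimate. The point is that $W_{pq},R_{pq}$ and the errors $E_{pq}$ scale compatibly, so the denominator factors $[1,\alpha^{(p)}]$ cancel against those hidden in $v(a_0)$ through the primitivity relation above; and the identity \eqref{eq4.11}, $[a_0,\Delta]=[b_0,\Delta]$, synchronises $a_0$ and $b_0$ modulo $\Delta$, which lets me run the same argument with the roles of $\alpha$ and $\beta$ interchanged when it is $\alpha$ rather than $\beta$ that is badly non-integral. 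The constant $5$ emerges from counting the discriminant factors $\fd_{pq}$ that intervene, while the factor $2$ on $v\bigl(\cross_{ijkl}(\beta)/\cross_{ijkl}(\alpha)-1\bigr)$ reflects that the numerator's dominant surviving terms are linear in the $E_{pq}$ whereas the denominator $R_{ik}R_{jl}W_{ij}W_{kl}$ is quadratic in the conjugate differences.
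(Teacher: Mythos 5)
Your first half is sound and coincides with the paper's opening moves: the relation $b_0(\beta^{(p)}-\beta^{(q)})=m_{1,1}a_0(\alpha^{(p)}-\alpha^{(q)})+E_{pq}$ with $v(E_{pq})\ge v(\Delta)$ is exactly the congruence \eqref{eq4.14a}, and in the regime $v(a_0)\le\half v(\Delta )$ it does yield \eqref{eq4.12} --- indeed more simply than via your cross-ratio expansion, since by \eqref{eq4.10}, \eqref{eq4.13}, \eqref{eq4.14} one has $v(m_{1,1}a_0(\alpha^{(p)}-\alpha^{(q)}))\le\half v(\Delta)+\half\delta_v$ for every pair, so dividing the congruence already gives $v\big(\cross_{ijkl}(\beta)/\cross_{ijkl}(\alpha)-1\big)\ge\half v(\Delta)-\half\delta_v$.

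The gap is the case $v(a_0)>\half v(\Delta )$, which you correctly identify as the hard part but do not resolve. Your proposed fix --- use \eqref{eq4.11} to interchange the roles of $\alpha$ and $\beta$ when $\alpha$ is badly non-integral --- cannot work, because \eqref{eq4.11} says precisely that $v(a_0)>\half v(\Delta )$ forces $v(b_0)>\half v(\Delta )$ as well: both numbers are badly non-integral at $v$ simultaneously, so there is no better side to stand on. Concretely, at most one conjugate, say $\alpha^{(1)}$, can have $v(\alpha^{(1)})<-\half\delta_v$, and then $v(a_0)$ is essentially $-v(\alpha^{(1)})$, which is a priori unbounded in terms of $\delta_v$ and $v(\Delta)$; for a pair $p,q\ge 2$ one gets $v(m_{1,1}a_0(\alpha^{(p)}-\alpha^{(q)}))\ge v(a_0)-\half\delta_v>\half v(\Delta)-\half\delta_v$, so the first-row congruence modulo $\Delta$ carries no information about the ratio $R_{pq}/(m_{1,1}W_{pq})$, and the valuation count in your numerator/denominator expansion degrades to a bound of the shape $v(\Delta)-v(a_0)$, which can be negative. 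The paper's resolution is a genuinely different device that is absent from your proposal: the discriminant forces $\min (v(a_0),v(a_1))\le\delta_v$ (this is \eqref{eq4.22}), so in this regime $v(a_1)\le\delta_v$ and $v(b_1)\le\delta_v$; one then exploits the \emph{second} row of the change-of-basis relation, $\rho_2=m_{2,0}+m_{2,1}\omega_1+m_{2,2}\omega_2$ with $\omega_2=a_0\alpha^2+a_1\alpha$, shows that the terms $a_0({\alpha^{(i)}})^2$, $b_0({\beta^{(i)}})^2$ and $m_{2,1}a_0\alpha^{(i)}$ are negligible for $i\ge 2$, and extracts the usable near-proportionality $b_1(\beta^{(i)}-\beta^{(j)})\approx m_{2,2}a_1(\alpha^{(i)}-\alpha^{(j)})$ for $i,j\ge 2$, while pairs involving the exceptional index $1$ are still handled by the first row because there $v(a_0(\alpha^{(1)}-\alpha^{(j)}))\le\delta_v$. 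Without this second-row argument (or an equivalent substitute) the lemma is not proved; it is also where the constants $5$ and $2$ in \eqref{eq4.12} actually come from.
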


\begin{proof}
We assume without loss of generality
\begin{equation}\label{eq4.15}
v(\Delta )>5\delta_v.
\end{equation}
 
We frequently use the following facts. 
Let as before $x\mapsto x^{(i)}$ ($i=1\kdots n$) be the $\vk$-isomorphic embeddings of $K$ in $L$ so that
\[
f_{\alpha}=a_0(X-\alpha^{(1)})\cdots (X-\alpha^{(n)}),\ \ 
f_{\beta}=b_0(X-\beta^{(1)})\cdots (X-\beta^{(n)}).  
\]
Since $f_{\alpha},f_{\beta}$ are primitive,
we have by Gauss' Lemma,
\begin{equation}\label{eq4.13}
v(a_0)+\sum_{i=1}^n\min (0,v(\alpha^{(i)}))=0,\ \ v(b_0)+\sum_{i=1}^n\min (0,v(\beta^{(i)}))=0.
\end{equation}
By Lemma \ref{lem2.-1} we have $\fd =[D(f_{\alpha})]=[D({f_{\beta}})]$. 
Using $D(f_{\alpha})=\\ a_0^{2n-2}\prod_{1\leq i<j\leq n} (\alpha^{(i)}-\alpha^{(j)})^2$
and likewise for $f_{\beta}$, and inserting \eqref{eq4.13}, we obtain
\begin{equation}\label{eq4.14}
\begin{split}
\half\delta_v &=\sum_{1\leq i<j\leq n}\!\! \Big(v(\alpha^{(i)}-\alpha^{(j)})-\min (0,v(\alpha^{(i)}))-\min (0,v(\alpha^{(j)}))\Big)
\\
&=\sum_{1\leq i<j\leq n}\!\! \Big(v(\beta^{(i)}-\beta^{(j)})-\min (0,v(\beta^{(i)}))-\min (0,v(\beta^{(j)}))\Big).
\end{split}
\end{equation}
For $a,b,c\in L$ we write $\mod{a}{b}{c}$ 
if $v(a-b)\geq v(c)$.
By \eqref{eq4.5} and \eqref{eq4.9} we have
\[ 
\mod{b_0\beta^{(i)}}{m_{1,0}+m_{1,1}a_0\alpha^{(i)}}{\Delta}\ \ \text{for } i=1\kdots n;
\]
here we used that $\omega_j,\, \rho_j$ $(j=1\kdots n-1)$ and their
conjugates all lie in $A_L$. This implies
\begin{equation}\label{eq4.14a}
\mod{b_0(\beta^{(i)}-\beta^{(j)})}{m_{1,1}a_0(\alpha^{(i)}-\alpha^{(j)})}{\Delta}\ \ 
\text{for } i,j=1\kdots n.
\end{equation}

In the remainder of the proof we distinguish the two cases $v(a_0)\leq\half v(\Delta )$                                                                                                                                                                                                                           and $v(a_0)>\half v(\Delta )$. First assume that 
\[
v(a_0)\leq \half v(\Delta ).
\]
Let $i,j$ be any two distinct indices from $\{ 1\kdots n\}$.
Then by \eqref{eq4.10}, \eqref{eq4.14},
\begin{align*}
&v(m_{1,1}a_0(\alpha^{(i}-\alpha^{(j)}))
\\
&\qquad\leq v(a_0)+v(\alpha^{(i)}-\alpha^{(j)})-
\min (0,v(\alpha^{(i)}))-\min (0,v(\alpha^{(j)}))
\\
&\qquad\leq \half v(\Delta )+\half\delta_v,
\end{align*}
and together with \eqref{eq4.14a} this gives
\[
v\Big(\frac{b_0(\beta^{(i)}-\beta^{(j)})}{m_{1,1}a_0(\alpha^{(i)}-\alpha^{(j)})}-1\Big)
\geq \half v(\Delta )-\half \delta_v,
\]
which is $>0$ by \eqref{eq4.15}.
Using the trivial observation for discrete valuations
\begin{equation}\label{eq4.observation}
v(x_i-1)\geq c>0\ \text{for } i=1,2,3,4 \Longrightarrow v\Big(\frac{x_1x_2}{x_3x_4}-1\Big)\geq c
\end{equation}
we deduce
\[
v\Big(\frac{\cross_{ijkl}(\beta )}{\cross_{ijkl}(\alpha )}-1\Big)\geq \half v(\Delta )-\half\delta_v
\]
for all pairwise distinct $i,j,k,l\in\{ 1\kdots n\}$, which implies \eqref{eq4.12}.
\\[0.2cm]

Next, assume that
\begin{equation}\label{eq4.a0}
v(a_0)>\half v(\Delta ).
\end{equation} 
Then \eqref{eq4.11} implies that also 
\begin{equation}\label{eq4.b0}
v(b_0)>\half v(\Delta ).
\end{equation}

We first observe
\begin{equation}\label{eq4.22}
v(a_1)\leq \delta_v,\ \ v(b_1)\leq\delta_v.
\end{equation}
Indeed, recall that the discriminant $D(F)$ of a binary form
$F=\\ \sum_{i=0}^nx_iX^{n-i}Y^i$ is a polynomial in $\Zz [x_0\kdots x_n]$.
Consequently, if $F$ as above and 
$G=\sum_{i=0}^ny_iX^{n-i}Y^i$ are binary forms in $A[X,Y]$, we have $v(D(F)-D(G))\geq\min_{0\leq i\leq n} v(x_i-y_i)$.
Applying this with  $F(X,Y)=Y^nf_{\alpha}(X/Y)$ and $G(X,Y)=F(X,Y)-a_0X^n-a_1X^{n-1}Y$,
and noting that $D(G)=0$ since $G$ is divisible by $Y^2$,
we have
\[
\delta_v=v(D(F))=v(D(F)-D(G))\geq\min (v(a_0),v(a_1)).
\]
By \eqref{eq4.a0}, \eqref{eq4.15} we have $v(a_0)>\half v(\Delta )>\delta_v$.
Hence $v(a_1)\leq\delta_v$. The proof of $v(b_1)\leq\delta_v$ is the same, using \eqref{eq4.b0}
instead of \eqref{eq4.a0}.
 
Assume without loss of generality that
\[
v(\alpha^{(1)})=\min (v(\alpha^{(1)})\kdots v(\alpha^{(n)})).
\]
Then
\begin{equation}\label{eq4.18a}
v(\alpha^{(i)})\geq -\half \delta_v\ \ \text{for } i=2\kdots n.
\end{equation}
Indeed, suppose that $v(\alpha^{(i)})<-\half\delta_v$ for some $i\geq 2$. Then 
by \eqref{eq4.14},
\[
\half\delta_v\geq v\big({\alpha^{(1)}}^{-1}-{\alpha^{(i)}}^{-1}\big)>\half\delta_v,
\]
which is impossible. Thus,
\begin{equation}\label{eq4.18}
\begin{split}
v(a_0\alpha^{(i)}) &>\half v(\Delta )-\half \delta_v
\ \ \text{for } i\geq 2,
\\
v(a_0\alpha^{(1)})&=v(a_1+a_0(\alpha^{(2)}+\cdots +\alpha^{(n)}))\leq \delta_v,
\end{split}
\end{equation}
where in the derivation of the first inequality we used \eqref{eq4.a0}
and in that of the last inequality \eqref{eq4.22}, \eqref{eq4.15}.

Let $k$ be an index such that 
\[
v(\beta^{(k)})=\min (v(\beta^{(1)})\kdots v(\beta^{(n)})).
\]
Then completely similarly to
\eqref{eq4.18a}, \eqref{eq4.18} we derive
\begin{equation}\label{eq4.19}
\begin{split}
&v(\beta^{(i)})\geq -\half\delta_v\ \ \text{for } i\not= k,
\\
&v(b_0\beta^{(k)}) \leq\delta_v,\
v(b_0\beta^{(i)}) >\half v(\Delta )-\half\delta_v\ \ \text{for }i\not= k,
\end{split}
\end{equation}
where we used \eqref{eq4.b0} instead of \eqref{eq4.a0}.
We show that the index $k$ must be equal to $1$.
Recall that by \eqref{eq4.14a},
\[
\mod{b_0(\beta^{(i)}-\beta^{(1)})}{m_{1,1}a_0(\alpha^{(i)}-\alpha^{(1)})}{\Delta}\ \ 
\text{for } i\geq 2.
\]
Assuming $k\not= 1$,  for $i\not= 1,k$  this congruence contradicts the two inequalities
\begin{align*}
&v(b_0(\beta^{(i)}-\beta^{(1)}))>\half v(\Delta )-\half\delta_v\ \ 
\text{implied by \eqref{eq4.19},}
\\
&v(m_{1,1}a_0(\alpha^{(i)}-\alpha^{(1)}))\leq \delta_v<\half v(\Delta )-\half\delta_v
\end{align*}
implied by \eqref{eq4.10}, \eqref{eq4.18}, \eqref{eq4.15}.
So indeed $k=1$, and thus, \eqref{eq4.19} becomes
\begin{equation}\label{eq4.21}
\begin{split}
&v(\beta^{(i)})\geq -\half\delta_v\ \ \text{for } i\geq 2,
\\
&v(b_0\beta^{(1)}) \leq \delta_v,\
v(b_0\beta^{(i)}) >\half v(\Delta )-\half\delta_v\ \ \text{for }i\geq 2.
\end{split}
\end{equation}

Let $i\in\{ 2\kdots n\}$. By \eqref{eq4.5} and \eqref{eq4.9} we have
\[
\mod{b_0{\beta^{(i)}}^2+b_1\beta^{(i)}}{m_{2,0}+m_{2,1}a_0\alpha^{(i)}+m_{2,2}(a_0{\alpha^{(i)}}^2+a_1\alpha^{(i)})}{\Delta},
\]
while
\begin{align*}
&v(a_0({\alpha^{(i)}})^2)> \half v(\Delta )-\delta_v\ \ 
\text{by \eqref{eq4.a0}, \eqref{eq4.18a},}
\\
&v(b_0({\beta^{(i)}})^2)>\half v(\Delta )-\delta_v\ \ 
\text{by \eqref{eq4.b0}, \eqref{eq4.21},}
\\
&v(a_0\alpha^{(i)})>\half v(\Delta )-\half \delta_v\ \ \text{by \eqref{eq4.a0}, \eqref{eq4.18a}.}
\end{align*}
These relations together imply
\[
v(b_1\beta^{(i)}-m_{2,0}-m_{2,2}a_1\alpha^{(i)})>\half v(\Delta )-\delta_v\ \ 
\text{for } i\geq 2.
\]

Now let $i,j$ be any two distinct indices with $2\leq i,j\leq n$. Then by the inequality just derived,
\begin{equation}\label{eq4.24}
v\big(b_1(\beta^{(i)}-\beta^{(j)})-m_{2,2}a_1(\alpha^{(i)}-\alpha^{(j)})\big)>
\half v(\Delta ) -\delta_v .
\end{equation}
Further, by \eqref{eq4.10}, \eqref{eq4.22},
\eqref{eq4.14},
\[
\begin{split}
&v(m_{2,2}a_1(\alpha^{(i)}-\alpha^{(j)})) 
\\
&\qquad \leq v(a_1)+v(\alpha^{(i)}-\alpha^{(j)})-\min (0,v(\alpha^{(i)}))-\min (0,v(\alpha^{(j)}))
\\
&\qquad \leq\smallfrac{3}{2}\delta_v,
\end{split}
\]
which together with \eqref{eq4.24} implies
\begin{equation}\label{eq4.25}
v\Big(\frac{b_1(\beta^{(i)}-\beta^{(j)})}{m_{2,2}a_1(\alpha^{(i)}-\alpha^{(j)})}-1\Big)>
\half v(\Delta )-\smallfrac{5}{2}\delta_v.
\end{equation}

Inequality \eqref{eq4.25} holds for any pair of indices $i,j\geq 2$.
We still have to look at the case where one of the indices is $1$.
Let $j\geq 2$. Then by \eqref{eq4.18}, \eqref{eq4.15},
\[
v(a_0(\alpha^{(1)}-\alpha^{(j)}))\leq \delta_v,
\]
which together with \eqref{eq4.14a} implies
\begin{equation}\label{eq4.28}
v\Big(\frac{b_0(\beta^{(1)}-\beta^{(j)})}{m_{1,1}a_0(\alpha^{(1)}-\alpha^{(j)})}-1\Big)
>v(\Delta )-\delta_v.
\end{equation}
Finally, from \eqref{eq4.25}, \eqref{eq4.28}, \eqref{eq4.15} and observation \eqref{eq4.observation}
we deduce
\[
v\Big(\frac{\cross_{ijkl}(\beta )}{\cross_{ijkl}(\alpha )}-1\Big)>
\half v(\Delta )-\smallfrac{5}{2}\delta_v
\]
for all pairwise distinct $i,j,k,l\in\{ 1\kdots n\}$.
This implies \eqref{eq4.12} and thus completes the proof of Lemma \ref{lem4.3}.
\end{proof}

\begin{proof}[Proof of Proposition \ref{prop4.1}]
By applying Lemma \ref{lem4.3} for all $v\in\VV_L$, \eqref{eq4.0} clearly follows.
\end{proof}

\section{Proof of Theorems \ref{thm1.1}--\ref{thm1.3}}\label{section5}

Let $K$ be a number field.
Recall that $\alpha_1\in K$ is called $k$-special if $K=\Qq (\alpha_1 )$
and there are $\alpha_2\kdots \alpha_k$
such that $\alpha_1\kdots\alpha_k$ are pairwise $\GL_2(\Zz )$-inequivalent and 
$\Zz_{\alpha_1}=\cdots =\Zz_{\alpha_k}$. A $2$-special number in $K$ is called special.
We first prove the following proposition.

\begin{proposition}\label{prop5.1}
Let $K$ be a number field of degree $n\geq 3$.
Then every $\GL_2(\Qq )$-equivalence class of special
$\alpha\in K$ is the union of at most finitely many $\GL_2(\Zz )$-equivalence classes.
\end{proposition}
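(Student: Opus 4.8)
The plan is first to dispose of the case $n=3$, where the statement is vacuous: if $\alpha,\beta$ are $\GL_2(\Zz)$-inequivalent with $\Qq(\alpha)=\Qq(\beta)=K$ and $\Zz_{\alpha}=\Zz_{\beta}$, then Lemma~\ref{lem2.3}(i) makes them $\GL_2(\Qq)$-equivalent and Lemma~\ref{lem2.4} then forces them to be $\GL_2(\Zz)$-equivalent, a contradiction; so $K$ has no special numbers when $n=3$. I therefore assume $n\geq 4$ and fix a $\GL_2(\Qq)$-equivalence class $\CC$ of special numbers. The decisive first reduction is Lemma~\ref{lem2.4}: any two special numbers lying in $\CC$ that generate the \emph{same} order are $\GL_2(\Qq)$-equivalent and share their order, hence are $\GL_2(\Zz)$-equivalent. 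Thus the map sending a $\GL_2(\Zz)$-class inside $\CC$ to the order $\Zz_{\alpha}$ is injective, and it suffices to show that only finitely many orders occur as $\Zz_{\alpha}$ with $\alpha\in\CC$ special; equivalently, to bound the discriminant $D(\Zz_{\alpha})$ along $\CC$ (finitely many orders of bounded discriminant then finish the count).

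To produce such a bound I would invoke Proposition~\ref{prop4.1}. Fixing one reference special pair $(\alpha_0,\beta_0)$ with $\alpha_0\in\CC$, any special $\alpha\in\CC$ with partner $\beta$ satisfies $\alpha=C\alpha_0$ for a primitive integral matrix $C$ with $\det C=\Delta$, and \emph{provided} the pair $(\alpha,\beta)$ is $\GL_2(\Qq)$-equivalent to $(\alpha_0,\beta_0)$, Proposition~\ref{prop4.1} (applied with $(\alpha_0,\beta_0)$ as the unstarred pair) gives the containment $\fd_0^{5}\,\fa(\alpha_0,\beta_0)^{2}\subseteq\Delta A_L$, where $\fd_0$ is the discriminant ideal of $A_{\alpha_0}$; taking norms yields $|\Delta|^{[L:\Qq]}\leq N(\fd_0)^{5}N(\fa(\alpha_0,\beta_0))^{2}$, a constant depending only on the reference. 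A bound on $\Delta$ leaves only finitely many matrices $C$ modulo left multiplication by $\GL_2(\Zz)$ (the usual Hermite/Hecke-operator finiteness for integral matrices of bounded determinant), hence only finitely many $\GL_2(\Zz)$-classes of such $\alpha$, each of bounded discriminant.

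The hard part is that a single reference pair controls only those special $\alpha\in\CC$ whose partner $\beta$ is $\GL_2(\Qq)$-equivalent to $\beta_0$, whereas a priori the partners range over infinitely many $\GL_2(\Qq)$-classes: since the cross ratios $\cross_{ijkl}(\alpha)$ are constant on $\CC$, a partner $\beta$ is pinned down up to $\GL_2(\Qq)$ (Lemma~\ref{lem2.3}(ii)) by the unit tuple $\big(\cross_{ijkl}(\beta)/\cross_{ijkl}(\alpha)\big)\in\OO_L^{*}$, for which there is no obvious finite list. The real content of the proof is thus to rule this out, i.e.\ to bound $D(\Zz_{\alpha})$ \emph{uniformly} over all special $\alpha\in\CC$ (equivalently, to confine the partners to finitely many $\GL_2(\Qq)$-classes), and I expect this uniform bound to be the decisive and most delicate step. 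This is exactly what the local estimate behind Proposition~\ref{prop4.1}, namely Lemma~\ref{lem4.3} with $v(\Delta)\leq 5\delta_v+2\,v\big(\cross_{ijkl}(\beta)/\cross_{ijkl}(\alpha)-1\big)$, is engineered to deliver, converting the mere existence of a nontrivial partner (a proper ideal $\fa$) into an upper bound on the discriminant at every place $v$. Once the discriminant is bounded along $\CC$, the finiteness of orders of bounded discriminant, combined with the per-pair-class finiteness of the second paragraph, yields the claim.
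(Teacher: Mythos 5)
Your reduction for $n=3$ and your overall architecture for $n\geq 4$ match the paper's, and your observation that Lemma~\ref{lem2.4} makes the map from $\GL_2(\Zz)$-classes inside $\CC$ to orders injective is correct. You have also correctly located the crux: Proposition~\ref{prop4.1} only applies to special pairs $(\alpha,\beta)$, $(\alpha^*,\beta^*)$ that are $\GL_2(\Qq)$-equivalent \emph{as pairs}, so a single reference pair $(\alpha_0,\beta_0)$ only controls those $\alpha\in\CC$ whose partner lies in the $\GL_2(\Qq)$-class of $\beta_0$, and one must first show the partners fall into finitely many $\GL_2(\Qq)$-classes. But at exactly this point your argument has a genuine gap: you assert that Lemma~\ref{lem4.3} ``is engineered to deliver'' this confinement, and it is not. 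Lemma~\ref{lem4.3} and Proposition~\ref{prop4.1} presuppose the pairwise $\GL_2(\Qq)$-equivalence you are trying to establish; they bound the determinant $\Delta$ of the matrix carrying $\alpha$ to $\alpha^*$ in terms of data of the \emph{fixed} pair, and say nothing about how many $\GL_2(\Qq)$-classes the partners $\beta$ can occupy. Deferring to them here is circular.

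The missing idea is a unit-equation argument that has nothing to do with Section~\ref{section4}. Since $\cross_{ijkl}(\alpha)=\cross_{ijkl}(\CC)$ is constant on $\CC$, the cross-ratio identity \eqref{eq3.2} applied to $\beta$ together with Lemma~\ref{lem2.2} gives, for each $\alpha\in\CC$ with partner $\beta$,
\[
\cross_{ijkl}(\CC)\,\ve_{ijkl}+\cross_{ilkj}(\CC)\,\ve_{ilkj}=1,
\qquad \ve_{ijkl}=\frac{\cross_{ijkl}(\beta)}{\cross_{ijkl}(\alpha)}\in\OO_L^*,
\]
an equation $ax+by=1$ in $x,y\in\OO_L^*$ with coefficients depending only on $\CC$. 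Lang's theorem (Lemma~\ref{lem5.2}) shows it has finitely many solutions, so each $\ve_{ijkl}$, hence each $\cross_{ijkl}(\beta)$, runs through a finite set, and Lemma~\ref{lem2.3}(ii) then confines $\beta$ to finitely many $\GL_2(\Qq)$-classes $\DD$. This partitions $\CC$ into finitely many subclasses $\CC(\DD)$, on each of which your second paragraph (Proposition~\ref{prop4.1} plus the Hermite-type finiteness of integral matrices of bounded determinant modulo left multiplication by $\GL_2(\Zz)$) applies verbatim and finishes the proof. Without this step your argument does not close.
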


\begin{proof}
First let $n=3$. By Lemma \ref{lem2.3} (i) and Lemma \ref{lem2.4},
any two numbers $\alpha$, $\beta$ with $\Zz_{\alpha}=\Zz_{\beta}$
are $\GL_2(\Zz )$-equivalent. Hence there are no special numbers in $K$.

Next let $n\geq 4$. Denote by $L$ the normal closure of $K$.
Let $\CC$ be a $\GL_2(\Qq )$-equivalence class of special $\alpha\in K$.
We first split $\CC$ into a finite collection of subclasses.
Since cross ratios of $\GL_2(\Qq )$-equivalent numbers are the same, we may define
$\cross_{ijkl}(\CC ):=\cross_{ijkl}(\alpha )$ for any $\alpha\in\CC$
and any four distinct indices $i,j,k,l\in\{ 1\kdots n\}$.
For every $\alpha\in\CC$ there is $\beta\in K$ 
such that $\Zz_{\alpha}=\Zz_{\beta}$ 
and $\beta$ is not $\GL_2 (\Zz )$-equivalent to $\alpha$.  
From Lemma \ref{lem2.2} and \eqref{eq3.3} it follows that
$\ve_{ijkl}:=\cross_{ijkl}(\beta )/\cross_{ijkl}(\alpha )\in\OO_L^*$ for all distinct $i,j,k,l\in\{ 1\kdots n\}$ and
\begin{equation}\label{eq5.0}
\cross_{ijkl}(\CC )\ve_{ijkl}+\cross_{ilkj}(\CC )\ve_{ilkj}=1
\end{equation}
for all distinct $i,j,k,l\in\{ 1\kdots n\}$. 

We apply the following result, due to Lang \cite{L60}.

\begin{lemma}\label{lem5.2}
Let $F$ be a field of characteristic $0$, let $a,b\in F^*$, 
and let $\Gamma$ be a subgroup of $F^*$ of finite rank. Then the equation
\[
ax+by=1\ \ \text{in } x,y\in\Gamma
\]
has only finitely many solutions.
\end{lemma}

By applying this to \eqref{eq5.0} with $\Gamma =\OO_L^*$, we infer that there is a finite set depending only on $\CC$ such that for all $i,j,k,l$,
$\ve_{ijkl}$ belongs to this set, and so, for all $i,j,k,l$, 
$\cross_{ijkl}(\beta )$
belongs to a finite set depending only on $\CC$.
Now Lemma \ref{lem2.3} (ii)
implies that the $\GL_2(\Qq )$-equivalence class of $\beta$ belongs to a finite collection
depending only on $\CC$. Further, by Lemma \ref{lem2.4}, the classes in this
collection are disjoint from $\CC$. This implies that $\CC$ can be partitioned
into a finite collection of subclasses 
\[
\CC (\DD ):=\{ \alpha\in\CC:\ \text{there is $\beta\in\DD$ with } \Zz_{\alpha}=\Zz_{\beta}\},
\]
where $\DD$ is a $\GL_2(\Qq )$-equivalence class of special numbers distinct from $\CC$.

Take a $\GL_2(\Qq )$-equivalence class $\DD\not=\CC$ for which $\CC (\DD )\not=\emptyset$.
We have to show that $\CC (\DD )$ is the union of finitely many
$\GL_2(\Zz )$-equivalence classes.
We use that for every positive integer $\Delta$ there is a finite set of integer
$2\times 2$-matrices $\FF (\Delta )$, such that if $C$ is any $2\times 2$-matrix with 
$|\det C|=\Delta$, 
then there is $U\in\GL_2(\Zz )$ with $UC\in \FF (\Delta )$.

Fix $\alpha\in\CC (\DD )$ and then $\beta\in\DD$ with $\Zz_{\alpha}=\Zz_{\beta}$.
Then choose $\alpha^*\in\CC (\DD )$; we let $\alpha^*$ vary. Further choose
$\beta^*\in\DD$ with $\Zz_{\alpha^*}=\Zz_{\beta^*}$.
Thus, $(\alpha ,\beta )$ and $(\alpha^*, \beta^*)$ are
two $\GL_2(\Qq )$-equivalent special pairs as in Proposition \ref{prop4.1}, with $A=\Zz$.
Let $C$ be the matrix from \eqref{eq4.1}, so with $\alpha^*=C\alpha$,
and put $\Delta :=|\det C |$.
Then there is $U\in\GL_2(\Zz )$ such that
\[
UC=:C_1\in \FF (\Delta ).
\]
Let $\alpha^{**}:=U\alpha^*=C_1\alpha$. 
By Proposition \ref{prop4.1}, $\Delta$ belongs to a finite set depending on $\alpha ,\beta$,
hence so does $C_1$, and thus $\alpha^{**}$.
This implies that the $\GL_2(\Zz )$-equivalence class of $\alpha^*$  belongs to a finite 
collection depending on $\alpha ,\beta$. 
This shows that indeed, $\CC (\DD )$ is the union of finitely many 
$\GL_2(\Zz )$-equivalence classes.
\end{proof}

\begin{proof}[Proof of Theorem \ref{thm1.1}]
Propositions \ref{prop3.1} and \ref{prop5.1} imply that if $K$
is quartic then the $3$-special numbers $\alpha\in K$ lie in finitely many
$\GL_2(\Zz )$-equivalence classes. Further, if $K$ has degree $\geq 5$
and the Galois group of its normal closure is $5$-transitive,
then the special numbers in $K$ lie in finitely many $\GL_2(\Zz )$-equivalence classes.
As we observed in Section \ref{section3}, this implies Theorem \ref{thm1.1}.
\end{proof}

\begin{proof}[Proof of Theorem \ref{thm1.2}]
Let $K$ be either a quartic field, or a number field of degree $\geq 5$ such that the Galois group of the normal closure
of $K$ is $5$-transitive. Consider a Hermite equivalence class $\HH$
of polynomials in $\PP\II (K)$  that falls apart into at least three $\GL_2(\Zz )$-equivalence classes if $[K:\Qq ]=4$, and into at least two $\GL_2(\Zz )$-equivalence classes
if $[K:\Qq ]\geq 5$.
Recall that two polynomials $f,g\in\PP\II (K)$ are Hermite equivalent
if $f$ has a root $\alpha$ and $g$ a root $\beta$ such that $\Qq (\alpha )=\Qq (\beta)=K$ and 
$\MM_{\beta }=\lambda\MM_{\alpha}$
for some non-zero $\lambda$. This implies $\Zz_{\alpha}=\Zz_{\beta}$.
Now if $f,g\in\HH$ are $\GL_2(\Zz )$-inequivalent, then so are $\alpha ,\beta$.
So the order $\OO=\Zz_{\alpha}$ has at least three rational monogenizations
if $[K:\Qq ]=4$, and at least two rational monogenizations if $[K:\Qq ]\geq 5$. 
Since $\OO$ is an order
of a conjugate of $K$ and $K$ has only finitely many conjugates,
Theorem \ref{thm1.1} implies that there are only finitely many 
possibilities for $\OO$. 
Given $\OO$, the set of $\alpha$ with $\Zz_{\alpha}=\OO$
is the union of finitely many $\GL_2(\Zz )$-equivalence classes.
Hence the set of $f\in\PP\II (K)$ having a root $\alpha$ with $\Zz_{\alpha}=\OO$
is the union of finitely many $\GL_2(\Zz )$-equivalence classes.
The class $\HH$ is the union of some of these classes.  
So we have only finitely many possibilities for $\HH$.
\end{proof}

\begin{proof}[Proof of Theorem \ref{thm1.3}]
Take an algebraic number $\alpha$ of degree $n\geq 3$.
Let $f_{\alpha}(X)=a_0X^n+\cdots +a_n\in\Zz [X]$ be the primitive minimal polynomial
of $\alpha$ and $F_{\alpha}(X,Y):=X^nf_{\alpha} (X/Y)$ its homogenization.
By Thue's Theorem \cite{T1909}, there is a number $C$ such that if $x,y$ are integers
with $F_{\alpha}(x,y)=\pm 1$, then $|x|,|y|\leq C$.
Let $p,q$ be distinct prime numbers such that $p,q>C^*:=\max (C, |a_0|,|a_n|)$.
The number $(q/p)\alpha$ has primitive minimal polynomial $f_{q\alpha /p}(X)=q^nf_{\alpha}(pX/q)$
(one verifies easily that the coefficients of this polynomial have $\gcd$ $1$,
since $p,q >|a_0|,|a_n|)$. The polynomial $f_{q\alpha /p}$, hence by \eqref{eq1.4} the order
$\Zz_{q\alpha /p}$, has discriminant $(pq)^{n(n-1)}D(f_{\alpha})$.
So the orders $\Zz_{q\alpha /p}$, with $p,q$ running through the primes exceeding $C^*$,
are all different.

We claim that among these orders, at most finitely many are monogenic. 
Indeed, suppose that $\Zz_{q\alpha /p}$ is monogenic.
Then $\Zz_{q\alpha/p}=\Zz_{\beta}=\Zz [\beta ]$ for some algebraic integer $\beta$.
Assume that $\beta$ is $\GL_2(\Zz )$-equivalent to $q\alpha /p$. That is,
$\beta =\frac{a(q\alpha /p)+b}{c(q\alpha /p)+d}$ for some $\big(\begin{smallmatrix}a&b\\ c&d\end{smallmatrix}\big)\in\GL_2(\Zz )$. Then the necessarily monic primitive minimal polynomial
of $\beta$ is 
\[
f_{\beta}(X)=\pm q^n(-cX+a)^nf_{\alpha}\Big(\frac{p(dX-b)}{q(-cX+a)}\Big).
\]
Its homogenization is 
\[
F_{\beta}(X,Y)=Y^nf_{\beta}(X/Y)=\pm F_{\alpha}(p(dX-bY),q(-cX+aY)).
\]
Since $\beta$ is integral, the leading coefficient of $f_{\beta}$ is $1$,
which implies $1 =F_{\beta}(1,0)=\pm F_{\alpha}(pd, -qc)$. But this is impossible,
since at least one of $|pd|,|qc|$ exceeds the bound $C$ defined above. We conclude that
$\beta$ cannot be $\GL_2(\Zz )$-equivalent to $q\alpha /p$. So any order $\Zz_{q\alpha /p}$
that is monogenic must have two rational monogenizations. By Proposition \ref{prop5.1} 
there are at most finitely many pairs of distinct primes $p,q>C^*$ for which this is possible.
This leaves us with infinitely many rationally monogenic orders $\Zz_{q\alpha /p}$
that are not monogenic.
\end{proof}

\section{A generalization over the $S$-integers}\label{section6}

In this section, we will state and prove a generalization of Theorem \ref{thm1.1}
to the ring $\OO_S$ of $S$-integers of a number field.
The ring of $S$-integers is a Dedekind domain, but in general not a principal ideal domain,
therefore, the arguments from the previous sections cannot be carried over.
Thus, in our generalization of Theorem \ref{thm1.1} we will not work
with $\GL_2(\OO_S)$-equivalence of algebraic numbers,
but rather with numbers that are $\GL_2(\OO_{\fp})$-equivalent for all
non-zero prime ideals $\fp$ of $\OO_S$, where $\OO_{\fp}$
is the localization of $\OO_S$ at $\fp$. 

Before stating and proving our result, we have collected some generalizations
of the material from Section \ref{section2} to Dedekind domains of characteristic $0$.
Most of these are equivalent, but for our purposes more convenient
formulations of material from \cite[Chap. 17]{EG17}.

Let $A$ be a Dedekind domain of characteristic $0$ and $\vk$ its quotient field.
Denote by $\PP (A)$ the collection of non-zero prime ideals of $A$ and by $Cl (A)$
the class group of $A$ (fractional ideals modulo principal fractional ideals).
Further, let $Cl (A)[m]$ be the subgroup of elements of $Cl (A)$ whose $m$-th power
is the principal ideal class. 
The localization of $A$ at a prime ideal $\fp\in\PP (A)$
is given by 
\[
A_{\fp}:=\{ x/y:\, x\in A, y\in A\setminus\fp\}.
\]
We define the group of matrices
\[
G(A):= \bigcap_{\fp\in\PP (A)} \vk^*\GL_2(A_{\fp}),
\]
that is the group of matrices $C$ such that for every $\fp\in\PP (A)$ there is
$\lambda_{\fp}\in\vk^*$ with $\lambda_{\fp}^{-1}C\in\GL_2(A_{\fp})$.

Let $\alpha ,\beta\in\overline{\vk}$ be of degree $\geq 3$ over $\vk$.
We say that $\alpha ,\beta$ are $G(A)$-\emph{equivalent} if there is $C\in G(A)$
with $\beta =C\alpha$. Then
\begin{align}\label{eq6.2}
&\alpha ,\beta\ \,\text{are $G(A)$-equivalent}
\\[-0.1cm]
\notag
&\qquad\Longleftrightarrow \alpha ,\beta\ \,
\text{are $\GL_2(A_{\fp})$-equivalent for every $\fp\in\PP(A)$}.
\end{align}
Indeed, $\Rightarrow$ is clear. As for $\Leftarrow$, suppose that 
$\alpha ,\beta$ are $\GL_2(A_{\fp})$-equivalent for every $\fp\in\PP (A)$.
Then there is $C\in\GL_2(\vk )$ such that $\beta =C\alpha$. But $C$ is determined
uniquely up to a scalar in $\vk^*$, hence $C\in\vk^*\GL_2(A_{\fp})$
for every $\fp\in \PP (A)$, i.e., $C\in G(A)$.

We compare $G(A)$-equivalence with $\GL_2(A)$-equivalence. 

\begin{lemma}\label{lem6.1}
$G(A)/\vk^*\GL_2(A) \cong Cl(A)[2]$.
\end{lemma}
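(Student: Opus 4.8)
The plan is to build an explicit isomorphism via the \emph{content ideal}. For $C=\begin{pmatrix}a&b\\c&d\end{pmatrix}\in\GL_2(\vk )$ let $\fc(C)$ denote the fractional ideal of $A$ generated by the entries $a,b,c,d$; this is a nonzero fractional ideal since $C$ is invertible, and its formation commutes with localization, i.e. $\fc(C)A_{\fp}$ is generated by the entries over $A_{\fp}$. The first step, which I regard as the technical heart, is the local characterization
\[
C\in G(A)\iff (\det C)=\fc(C)^2 .
\]
For ``$\Rightarrow$'', fix $\fp$ and write $C=\lambda_{\fp}U_{\fp}$ with $U_{\fp}\in\GL_2(A_{\fp})$; since $U_{\fp}$ is invertible over the discrete valuation ring $A_{\fp}$ its entries generate $A_{\fp}$ and $\det U_{\fp}\in A_{\fp}^*$, whence $\fc(C)A_{\fp}=(\lambda_{\fp})$ and $(\det C)A_{\fp}=(\lambda_{\fp}^2)$; as this holds for every $\fp$ we get $(\det C)=\fc(C)^2$. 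For ``$\Leftarrow$'', choosing a generator $\pi_{\fp}$ of $\fc(C)A_{\fp}$, the matrix $\pi_{\fp}^{-1}C$ has entries in $A_{\fp}$ and determinant generating $\pi_{\fp}^{-2}\fc(C)^2A_{\fp}=A_{\fp}$, hence $\pi_{\fp}^{-1}C\in\GL_2(A_{\fp})$ and $C\in\vk^*\GL_2(A_{\fp})$. In particular $\fc(C)^2=(\det C)$ is principal for every $C\in G(A)$, so $[\fc(C)]\in Cl(A)[2]$, and we may define $\Phi\colon G(A)\to Cl(A)[2]$ by $\Phi(C)=[\fc(C)]$.

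Next I would verify that $\Phi$ is a surjective homomorphism with kernel $\vk^*\GL_2(A)$. Multiplicativity $\fc(CC')=\fc(C)\fc(C')$ is a Gauss-type identity checked locally: writing $C=\lambda_{\fp}U_{\fp}$, $C'=\mu_{\fp}U_{\fp}'$ with $U_{\fp},U_{\fp}'\in\GL_2(A_{\fp})$ gives $\fc(CC')A_{\fp}=(\lambda_{\fp}\mu_{\fp})$, and two fractional ideals agreeing at every $\fp$ are equal; hence $\Phi$ is a homomorphism. For the kernel, $\Phi(C)$ is trivial exactly when $\fc(C)=(\mu)$ is principal; then $\mu^{-1}C$ has content $A$, so its entries lie in $A$, and its determinant generates $\mu^{-2}\fc(C)^2=A$ and is thus a unit, so $\mu^{-1}C\in\GL_2(A)$ and $C\in\vk^*\GL_2(A)$. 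Conversely any $C=\mu D$ with $D\in\GL_2(A)$ has $\fc(C)=(\mu)\fc(D)=(\mu)$ principal, so lies in the kernel. Thus $\ker\Phi=\vk^*\GL_2(A)$.

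It remains to prove surjectivity, for which I would give an explicit construction. Let $[\fb]\in Cl(A)[2]$, so $\fb^2=(\delta)$ for some $\delta\in\vk^*$, and write $\fb=(b_1,b_2)$ using that every ideal in a Dedekind domain is generated by two elements. Since $b_1\fb+b_2\fb=(b_1,b_2)\fb=\fb^2$ as a set, and $\delta\in\fb^2$, there exist $d,c\in\fb$ with $b_1d-b_2c=\delta$. Put $C:=\begin{pmatrix}b_1&b_2\\c&d\end{pmatrix}$. Then $\fc(C)=(b_1,b_2,c,d)=\fb$ because $c,d\in\fb=(b_1,b_2)$, while $(\det C)=(\delta)=\fb^2=\fc(C)^2$, so by the characterization above $C\in G(A)$ and $\Phi(C)=[\fb]$. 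Combining, $\Phi$ induces the asserted isomorphism $G(A)/\vk^*\GL_2(A)\cong Cl(A)[2]$.

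The main obstacle I anticipate is the equivalence $C\in G(A)\iff(\det C)=\fc(C)^2$: everything downstream (that $\Phi$ lands in $Cl(A)[2]$, the kernel computation, and especially recognizing the constructed matrix as lying in $G(A)$) rests on passing freely between the global condition defining $G(A)$ and the local determinant/content bookkeeping over the discrete valuation rings $A_{\fp}$. Once this is in place the homomorphism and kernel claims are routine local Gauss-lemma arguments, and the only remaining point needing care is the elementary but slightly fiddly verification that $\delta$ can be written as $b_1d-b_2c$ with $c,d\in\fb$.
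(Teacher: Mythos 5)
Your proof is correct and follows essentially the same route as the paper: the same local characterization $G(A)=\{C:\,[\det C]=[C]^2\}$, the same content-ideal homomorphism onto $Cl(A)[2]$, and the same identification of the kernel with $\vk^*\GL_2(A)$. The only (harmless) divergence is in the surjectivity step, where you realize $\delta\in\fb^2=b_1\fb+b_2\fb$ to build a matrix with second row in $\fb$, while the paper uses $[a^2,b^2]=[a,b]^2$ and the matrix $\bigl(\begin{smallmatrix}a&b\\ vb&ua\end{smallmatrix}\bigr)$; both constructions are equally valid.
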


\begin{proof} 
Let $[a_1\kdots a_r]$ denote the fractional ideal of $A$ generated by $a_1\kdots a_r$
and for a matrix $C$ with entries in $\vk$,
let $[C]$ denote the fractional ideal generated by the entries of $C$. We claim that
\begin{equation}\label{eq6.3}
G(A)=\{ C\in\GL_2(\vk ):\, [\det C]=[C]^2\}.
\end{equation}
Indeed, let $C\in G(A)$. 
Then for all $\fp\in\PP (A)$ there is $\lambda_{\fp}\in \vk^*$
such that $\lambda_{\fp}^{-1}C\in \GL_2(A_{\fp})$, hence
$[C]^2\cdot A_{\fp}=\lambda_{\fp}^2A_{\fp}=[\det C]\cdot A_{\fp}$ for all $\fp$,
implying $[C]^2=[\det C]$.
Conversely, assume $[\det C]=[C]^2$. Then for all $\fp\in\PP (A)$ 
there is $\lambda_{\fp}\in\vk^*$ with $[C]A_{\fp}=\lambda_{\fp}A_{\fp}$
since $A_{\fp}$ is a principal ideal domain. So $\det (\lambda_{\fp}^{-1}C)=\lambda_{\fp}^{-2}\det C\in A_{\fp}^*$, i.e., $\lambda_{\fp}^{-1}C\in\GL_2(A_{\fp})$ for all $\fp\in\PP (A)$,
implying $C\in G(A)$.

Now define the map
\[
G(A) \to Cl(A)[2]:\ \ C\mapsto\ \text{ideal class of $[C]$}.
\]
By \eqref{eq6.3} this is a well-defined group homomorphism. The kernel of this homomorphism
is the group of matrices $C\in G(A)$ such that $[C]$ is principal,
this is  precisely $\vk^*\GL_2(A)$. To show that the homomorphism is surjective,
pick any ideal class of $A$ whose square is principal,
and take an ideal from this class. 
By a well-known property of Dedekind domains, this ideal is generated by two elements, say it is $[a,b]$.
Then, using another property of Dedekind domains, $[a^2,b^2]=[a,b]^2=[\lambda ]$ for some $\lambda\in A$, hence there are $u,v\in A$ such that $ua^2-vb^2=\lambda$.
Take $C=\big(\begin{smallmatrix}a&b\\ vb&ua\end{smallmatrix}\big)$.
Then $[C]^2=[a,b]^2=[\lambda ]=[\det C]$, so $C\in G(A)$, and $C$
maps to the ideal class of $[a,b]$.
\end{proof}

Lemma \ref{lem6.1} implies that a $G(A)$-equivalence class
is the union of precisely $\#(Cl(A)[2])$ $\GL_2(A)$-equivalence classes.
This quantity is finite for instance if $A$ is the ring of $S$-integers
of a number field.

Let $K$ be a finite extension of $\vk$ of degree $n\geq 3$.
Given $\alpha$ with $\vk (\alpha )=K$, we define the $A$-module
\[
\MM_{\alpha}:=\{ x_0+x_1\alpha +\cdots +x_{n-1}\alpha^{n-1}:\, x_0\kdots x_{n-1}\in A\}
\]
and its ring of scalars
\[
A_{\alpha}:=\{ \xi\in K:\, \xi\MM_{\alpha}=\MM_{\alpha}\}.
\]
For $\fp\in \PP (A)$, let $\MM_{\fp ,\alpha}$ be the $A_{\fp}$-module generated
by $1,\alpha\kdots \alpha^{n-1}$ and 
$A_{\fp ,\alpha}:=\{\xi\in K:\ \xi\MM_{\fp ,\alpha}\subseteq\MM_{\fp ,\alpha}\}$.
Then
\begin{align}\label{eq6.4}
&A_{\fp ,\alpha}=A_{\fp}A_{\alpha}\ \ \text{for all } \fp\in \PP (A),
\\
\label{eq6.5}
&\displaystyle{A_{\alpha}=\bigcap_{\fp\in\PP (A)} A_{\fp ,\alpha}.}
\end{align}

\begin{lemma}\label{lem6.2}
Let $\alpha ,\beta\in K$ such that $\vk (\alpha )=\vk (\beta )=K$ and $\alpha ,\beta$ are
$G(A)$-equivalent. Then $A_{\alpha}=A_{\beta}$.
\end{lemma}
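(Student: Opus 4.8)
The plan is to reduce this global statement to a purely local one at each prime of $A$ and then reassemble it via \eqref{eq6.5}. First I would apply the characterization \eqref{eq6.2}: since $\alpha$ and $\beta$ are $G(A)$-equivalent, they are $\GL_2(A_{\fp})$-equivalent for every $\fp\in\PP(A)$. The one point to keep in mind here is that a matrix $C\in G(A)$ need only be a $\vk^*$-multiple of a matrix in $\GL_2(A_{\fp})$, but the scalar factor acts trivially as a M\"obius transformation, so the equivalence descends cleanly to each localization; this is precisely what \eqref{eq6.2} records.

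Next I would exploit the fact that each localization $A_{\fp}$ is a discrete valuation ring, hence a principal ideal domain. This means the elementary material of Section \ref{section2} applies verbatim with $A$ replaced by $A_{\fp}$, and with $\MM_{\alpha},A_{\alpha}$ replaced by $\MM_{\fp ,\alpha},A_{\fp ,\alpha}$. In particular, the observation following \eqref{eq2.2}---that $\GL_2$-equivalent elements have proportional $\MM$-modules---gives $\MM_{\fp ,\alpha}=\lambda_{\fp}\MM_{\fp ,\beta}$ for some $\lambda_{\fp}\in K^*$, and therefore
\[
A_{\fp ,\alpha}=A_{\fp ,\beta}\quad\text{for every }\fp\in\PP(A).
\]

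Finally I would intersect these local identities over all primes, using \eqref{eq6.5}:
\[
A_{\alpha}=\bigcap_{\fp\in\PP(A)}A_{\fp ,\alpha}=\bigcap_{\fp\in\PP(A)}A_{\fp ,\beta}=A_{\beta},
\]
which is the claim.

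I do not expect a genuine obstacle, since the whole argument is a local-to-global patching once the dictionary \eqref{eq6.2}--\eqref{eq6.5} is available. The only place demanding the slightest care is the transfer of the Section \ref{section2} results to the localizations $A_{\fp}$, but this is immediate because each $A_{\fp}$ is a PID and those results were proved over an arbitrary PID of characteristic $0$.
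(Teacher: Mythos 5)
Your argument is correct and coincides with the paper's own proof: both pass through \eqref{eq6.2} to obtain $\GL_2(A_{\fp})$-equivalence at every prime, deduce $A_{\fp,\alpha}=A_{\fp,\beta}$ from the PID results of Section \ref{section2}, and conclude by intersecting via \eqref{eq6.5}. The paper merely states this more tersely; your added remarks (the scalar acting trivially as a M\"obius transformation, $A_{\fp}$ being a PID) are exactly the right justifications.
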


\begin{proof}
From \eqref{eq6.2} it follows that $\alpha ,\beta$ are $\GL_2(A_{\fp})$-equivalent
for all $\fp$, so
$A_{\fp ,\alpha}=A_{\fp ,\beta}$ for all $\fp$. Now apply \eqref{eq6.5}.
\end{proof}

\begin{lemma}\label{lem6.4}
Let $\alpha ,\beta\in K$ such that $\vk (\alpha )=\vk (\beta )=K$ and $A_{\alpha}=A_{\beta}$.
Suppose that $\alpha ,\beta$ are $\GL_2(\vk )$-equivalent.
Then they are $G(A)$-equivalent.
\end{lemma}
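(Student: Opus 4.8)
The plan is to reduce this global statement to the local statements already established over principal ideal domains in Section \ref{section2}, using the characterization \eqref{eq6.2}. By \eqref{eq6.2}, the element $\alpha$ and $\beta$ are $G(A)$-equivalent as soon as they are $\GL_2(A_{\fp})$-equivalent for every $\fp\in\PP(A)$. So it suffices to prove $\GL_2(A_{\fp})$-equivalence at each prime and then let \eqref{eq6.2} assemble these local equivalences into a single $G(A)$-equivalence.

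First I would fix a prime $\fp\in\PP(A)$ and pass to the localization $A_{\fp}$. The key observation is that $A_{\fp}$ is a discrete valuation ring, hence a principal ideal domain, of characteristic $0$, and its quotient field is again $\vk$. Consequently every lemma of Section \ref{section2}, which was deliberately phrased over an arbitrary principal ideal domain of characteristic $0$, applies verbatim with $A_{\fp}$ in place of $A$. In this localized setting the relevant ring of scalars is exactly $A_{\fp ,\alpha}$ (respectively $A_{\fp ,\beta}$), the ring of scalars of the $A_{\fp}$-module spanned by $1,\alpha\kdots\alpha^{n-1}$ (respectively by the powers of $\beta$).

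Next I would localize the hypothesis $A_{\alpha}=A_{\beta}$. Applying \eqref{eq6.4} to $\alpha$ and to $\beta$ gives $A_{\fp ,\alpha}=A_{\fp}A_{\alpha}=A_{\fp}A_{\beta}=A_{\fp ,\beta}$. Together with the standing hypotheses that $\vk (\alpha )=\vk (\beta )=K$ with $[K:\vk ]=n\geq 3$ and that $\alpha ,\beta$ are $\GL_2(\vk )$-equivalent (a condition that does not depend on $\fp$), this places us precisely in the situation of Lemma \ref{lem2.4}, now read over the principal ideal domain $A_{\fp}$. That lemma then yields that $\alpha$ and $\beta$ are $\GL_2(A_{\fp})$-equivalent. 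Since $\fp$ was arbitrary, this holds for all $\fp\in\PP(A)$, and \eqref{eq6.2} converts it into $G(A)$-equivalence.

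I do not expect a serious obstacle: the argument is essentially the local-to-global bookkeeping announced at the start of the section. The only point that genuinely requires care is confirming that the object $A_{\fp ,\alpha}$ introduced in Section \ref{section6} really is the ring of scalars ``$A_{\alpha}$'' of Section \ref{section2} computed over the base ring $A_{\fp}$, so that Lemma \ref{lem2.4} is legitimately applicable; this compatibility is exactly what \eqref{eq6.4} encodes. The foresight of developing Section \ref{section2} over an arbitrary principal ideal domain rather than over $\Zz$ is precisely what makes this passage routine.
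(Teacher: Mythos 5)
Your argument is correct and follows exactly the paper's own proof: localize at each $\fp$, use \eqref{eq6.4} to transfer the hypothesis $A_{\alpha}=A_{\beta}$ to $A_{\fp,\alpha}=A_{\fp,\beta}$, apply Lemma \ref{lem2.4} over the principal ideal domain $A_{\fp}$, and assemble the local $\GL_2(A_{\fp})$-equivalences into $G(A)$-equivalence via \eqref{eq6.2}. Nothing is missing.
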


\begin{proof}
From \eqref{eq6.4} it follows that $A_{\fp ,\alpha}=A_{\fp ,\beta}$ and then from Lemma \ref{lem2.4} that $\alpha ,\beta$ are $\GL_2(A_{\fp})$-equivalent
for all $\fp\in\PP (A)$; here we have used that the $A_{\fp}$ are principal ideal domains.
Now \eqref{eq6.2} implies that they are $G(A)$-equivalent.
\end{proof}

Suppose that $[K:\vk ]=n\geq 4$.
Let $L$ be the normal closure of $K/\vk$ and $x\mapsto x^{(i)}$ ($i=1\kdots n$)
the $\vk$-isomorphic embeddings $K\hookrightarrow L$.
Denote by $A_L$ the integral closure of $A$ in $L$.
Define the cross ratios $\cross_{ijkl}(\alpha )$ ($K=\vk (\alpha )$) by \eqref{eq4.crossratio}.

\begin{lemma}\label{lem6.5}
Let $\alpha$, $\beta$ be such that $\vk (\alpha )=\vk (\beta )=K$ and $A_{\alpha}=A_{\beta}$.
Then for all pairwise distinct $i,j,k,l\in\{ 1\kdots n\}$ we have 
\[
\frac{\cross_{ijkl}(\alpha )}{\cross_{ijkl}(\beta )}\in A_L^*.
\]
\end{lemma}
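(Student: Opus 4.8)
The plan is to reduce this statement to its principal-ideal-domain analogue, Lemma \ref{lem2.2}, by localizing at each prime of $A$ and then gluing the local conclusions together by means of the valuations on $L$, exactly in the spirit of the proofs of Lemmas \ref{lem6.2} and \ref{lem6.4}.

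First I would fix pairwise distinct $i,j,k,l$ and set $q:=\cross_{ijkl}(\alpha )/\cross_{ijkl}(\beta )\in L^*$. The quantity $q$ is built from the conjugates $\alpha^{(\cdot)},\beta^{(\cdot)}$ in $L$ and so does not depend on any choice of base ring. The goal is then to show that $q\in A_L^*$, i.e. that $q$ has valuation $0$ at every nonzero prime of $A_L$.

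Next, for each $\fp\in\PP (A)$ I would localize. The ring $A_{\fp}$ is a discrete valuation ring, hence a principal ideal domain with quotient field $\vk$, and its integral closure in $L$ is $(A\setminus\fp )^{-1}A_L$. Running the Section \ref{section2} constructions over $A_{\fp}$ produces precisely the local order $A_{\fp ,\alpha}$, and by \eqref{eq6.4} together with the hypothesis $A_{\alpha}=A_{\beta}$ we obtain
\[
A_{\fp ,\alpha}=A_{\fp}A_{\alpha}=A_{\fp}A_{\beta}=A_{\fp ,\beta}.
\]
Thus the hypotheses of Lemma \ref{lem2.2}, read over the principal ideal domain $A_{\fp}$, are satisfied, and that lemma yields $q\in\big( (A\setminus\fp )^{-1}A_L\big)^*$.

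Finally I would pass from local units back to global units. The semilocal Dedekind domain $(A\setminus\fp )^{-1}A_L$ has as its nonzero primes exactly the primes of $A_L$ lying over $\fp$, so $q$ being a unit there means that $q$ has valuation $0$ at every prime $\fP$ of $A_L$ with $\fP\cap A=\fp$. Letting $\fp$ range over all of $\PP (A)$, and using that every nonzero prime $\fP$ of $A_L$ contracts to a nonzero prime of $A$, I conclude that $q$ has valuation $0$ at every prime of $A_L$, i.e. $q\in A_L^*$. There is no serious obstacle here: the only points requiring a little care are the standard commutation of integral closure with localization and the unit characterization in a semilocal Dedekind domain, after which the whole argument is just a local-to-global reformulation of Lemma \ref{lem2.2}, parallel to the derivation of \eqref{eq6.5}.
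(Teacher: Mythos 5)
Your proof is correct and follows essentially the same route as the paper: localize at each $\fp\in\PP (A)$, use \eqref{eq6.4} to get $A_{\fp ,\alpha}=A_{\fp ,\beta}$, apply Lemma \ref{lem2.2} over the principal ideal domain $A_{\fp}$, and then glue the local unit statements via $\bigcap_{\fp}A_{\fp ,L}^*=A_L^*$. Your valuation-theoretic phrasing of the last step is just a reformulation of the paper's intersection argument.
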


\begin{proof}
For $\fp\in \PP (A)$, let $A_{\fp ,L}$ be the integral closure of $A_{\fp}$ in $L$.
Then $\cap_{\fp\in \PP (A)} A_{\fp ,L}=A_L$. 
By \eqref{eq6.4} we have $A_{\fp ,\alpha}=A_{\fp ,\beta}$, and so by Lemma \ref{lem2.2},
$\frac{\cross_{ijkl}(\alpha )}{\cross_{ijkl}(\beta )}\in A_{\fp ,L}^*$ for all $\fp\in\PP (A)$.
Since $\cap_{\fp \in\PP (A)} A_{\fp ,L}^*=A_L^*$ this implies our lemma.
\end{proof}

We now specialize to rings of $S$-integers of number fields.
Let $\vk$ be a number field and $\OO_{\vk}$ its ring of integers.
Let $S$ be a finite set of non-zero prime ideals of $\OO_{\vk}$, 
and 
\[
\OO_S :=\{ x/y:\, x,y\in\OO_{\vk},\ y\ \text{composed of prime ideals from } S\}
\]
the ring of $S$-integers. Similarly as before, we denote by $\PP (\OO_S)$
the set of non-zero prime ideals of $\OO_S$. Further, for $\fp\in\PP (\OO_S)$,
we denote by $\OO_{\fp}$ the localization of $\OO_S$ at $\fp$, so that 
\[
G(\OO_S)=\bigcap_{\fp\in\PP (\OO_S)} \vk^*\GL_2(\OO_{\fp}).
\]

Let $K$ be a finite extension of $\vk$ of degree $n\geq 4$, and $L$ the
normal closure of $K/\vk$. 

Denote by $\OO_{S,K}$ the integral closure of $\OO_S$ in $K$.
By an $\OO_S$-\emph{order} of $K$ we mean a ring $\OO$ such that 
$\OO_S\subseteq\OO\subseteq\OO_{S,K}$ and $\vk\OO =K$. 

Recall that $\alpha ,\beta\in K$ are called $G(\OO_S)$-equivalent if
$\beta =C\alpha$ for some $C\in G(\OO_S)$. 
A \emph{rational monogenization} of an $\OO_S$-order $\OO$ 
is a $G(\OO_S)$-equivalence class of $\alpha$
such that $\OO_{S,\alpha}=\OO$.

Taking $\alpha$ with $K=\vk (\alpha )$, we say that the Galois group ${\rm Gal}(L/\vk )$
is $t$-transitive if the action of ${\rm Gal}(L/\vk )$ on the set of conjugates of $\alpha$ in $L$ 
is $t$-transitive.

We are now ready to state our generalization.

\begin{theorem}\label{thm6.6}
Let $\vk$ be an algebraic number field and $S$ a finite set of prime ideals from $\OO_{\vk}$.
Further, let $K$ be a finite extension of $\vk$, and $L$ the normal closure of $K/\vk$.
\\[0.1cm]
(i) Assume that $[K:\vk ]=4$. Then $K$ has only finitely many $\OO_S$-orders with more than two rational monogenizations.
\\[0.1cm]
(ii) Assume that $[K:\vk ]\geq 5$ and that ${\rm Gal}(L/\vk )$ is $5$-transitive.
Then $K$ has only finitely many $\OO_S$-orders with more than one rational monogenization.
\end{theorem}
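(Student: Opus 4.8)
The plan is to mirror the three–step structure of the proof of Theorem \ref{thm1.1}, replacing $\Zz$ by $\OO_S$ and $\GL_2(\Zz)$-equivalence by $G(\OO_S)$-equivalence, and to compensate for the failure of $\OO_S$ to be a principal ideal domain by a local-to-global argument based on \eqref{eq6.2}. First I would set up the reduction: call $\alpha_1\in K$ \emph{$k$-special} if $K=\vk(\alpha_1)$ and there are $\alpha_2\kdots\alpha_k$ with $\OO_{S,\alpha_1}=\cdots=\OO_{S,\alpha_k}$ that are pairwise $G(\OO_S)$-inequivalent. By Lemma \ref{lem6.2} every $\OO_S$-order with more than two (resp.\ more than one) rational monogenizations has the form $\OO_{S,\alpha}$ with $\alpha$ $3$-special (resp.\ special). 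Hence it suffices to show that the $3$-special numbers (when $[K:\vk]=4$), resp.\ the special numbers (when $[K:\vk]\ge 5$ with ${\rm Gal}(L/\vk)$ $5$-transitive), lie in only finitely many $G(\OO_S)$-equivalence classes.

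The second step is the analogue of Proposition \ref{prop3.1}: these numbers fall into finitely many $\GL_2(\vk)$-equivalence classes. For a special $\alpha$ with partner $\beta$ ($\OO_{S,\alpha}=\OO_{S,\beta}$), Lemma \ref{lem6.5} gives $\ve_{ijkl}:=\cross_{ijkl}(\beta)/\cross_{ijkl}(\alpha)\in A_L^*$, the $S$-unit group of $L$ (here $A=\OO_S$), which has finite rank. The formal cross-ratio identities \eqref{eq3.2}, \eqref{eq3.5}, \eqref{eq3.6} and the relation \eqref{eq3.8} carry over verbatim, as do the unit-equation inputs Lemmas \ref{lem3.2} and \ref{lem3.3} (stated for any finite-rank $\Gamma$ in characteristic $0$) and the $5$-transitivity case analysis b1--c2. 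The only point that uses the new setting is the analogue of \eqref{eq3.1}: if all cross ratios of $\alpha,\beta$ coincide, then by Lemma \ref{lem2.3}(ii) they are $\GL_2(\vk)$-equivalent, hence by Lemma \ref{lem6.4} (using $\OO_{S,\alpha}=\OO_{S,\beta}$) $G(\OO_S)$-equivalent, contradicting inequivalence. Thus $\cross_{1234}(\cdot)$, and then all cross ratios, run through a finite set, giving finitely many $\GL_2(\vk)$-classes by Lemma \ref{lem2.3}(ii).

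The third step, the analogue of Proposition \ref{prop5.1}, is the crux. Fix such a $\GL_2(\vk)$-class $\CC$. As in Proposition \ref{prop5.1}, applying Lang's Lemma \ref{lem5.2} to \eqref{eq5.0} over $A_L^*$ (with Lemma \ref{lem6.5} in place of Lemma \ref{lem2.2}) partitions $\CC$ into finitely many subclasses $\CC(\DD)$. Fix $(\alpha,\beta)$ in $\CC(\DD)$ with $\OO_{S,\alpha}=\OO_{S,\beta}$; for a variable $\alpha^*=C\alpha\in\CC(\DD)$ with partner $\beta^*$, the matrix $C\in\GL_2(\vk)$ is determined up to a $\vk^*$-scalar since $n\ge 4$. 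For each $\fp\in\PP(\OO_S)$ the localization $\OO_{\fp}$ is a principal ideal domain, so the valuation estimate \eqref{eq4.12} of Lemma \ref{lem4.3}---whose proof uses only the order equalities $A_\alpha=A_\beta$, $A_{\alpha^*}=A_{\beta^*}$ and the normalization \eqref{eq4.2}, not inequivalence, and which is already pointwise in $w\in\VV_L$---applies with $A=\OO_{\fp}$. Choosing $C$ with $\fp$-unit content, it yields $w(\det C)\le 5\,w(\fd)+2\,w(\fa(\alpha,\beta))$ for every $w\mid\fp$, where $\fd$ and $\fa(\alpha,\beta)$ are the fixed discriminant and cross-ratio ideals of $(\alpha,\beta)$; the latter is nonzero because $\alpha,\beta$ are $G(\OO_S)$-inequivalent (Lemmas \ref{lem2.3}, \ref{lem6.4}). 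Hence $w(\det C)=0$ for all but finitely many $w$, so $\alpha^*\sim_{\GL_2(\OO_{\fp})}\alpha$ at all but finitely many $\fp$, while at the remaining finitely many $\fp$ the elementary-divisor gap of $C$ is bounded by a fixed amount.

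It remains to pass from this local boundedness to global finiteness of $G(\OO_S)$-classes, and I expect this patching to be the main obstacle, since it is exactly where non-principality enters. Over $\Zz$ this was the trivial finite set $\FF(\Delta)$ of matrices of determinant $\Delta$ modulo $\GL_2(\Zz)$ (Hermite normal form); over the Dedekind domain $\OO_S$ the replacement is that the integral $2\times 2$ matrices whose determinant ideal divides a fixed ideal form finitely many left $\GL_2(\OO_S)$-cosets (finiteness of sublattices of $\OO_S^2$ of bounded index, which holds because $Cl(\OO_S)$ is finite). Combined with the local characterization \eqref{eq6.2} of $G(\OO_S)$-equivalence and with Lemma \ref{lem6.1}, which bounds the discrepancy between $\GL_2(\OO_S)$- and $G(\OO_S)$-equivalence by the finite group $Cl(\OO_S)[2]$, this confines $\alpha^*$ to finitely many $G(\OO_S)$-classes. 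This is precisely why the theorem is formulated with $G(\OO_S)$-equivalence---detected at every $\fp$---rather than $\GL_2(\OO_S)$-equivalence. Combining the second and third steps then proves Theorem \ref{thm6.6}.
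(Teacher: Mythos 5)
Your proposal is correct and follows essentially the same route as the paper: the same reduction to ($3$-)special numbers, the same transfer of Proposition \ref{prop3.1} with $\Gamma=\OO_{S,L}^*$, and the same local-to-global version of Proposition \ref{prop5.1} obtained by applying the machinery of Section \ref{section4} with $A=\OO_{\fp}$ at each prime, with triviality of $[\det C]$ outside the finite set of primes dividing $\fd$ or $\fa(\alpha,\beta)$. The only cosmetic difference is the final patching step, where the paper concludes directly from the local characterization \eqref{eq6.2} (finitely many $\GL_2(\OO_{\fp})$-cosets at each of the finitely many bad primes, $\GL_2(\OO_{\fp})$-equivalence to $\alpha$ elsewhere), so your additional appeals to global coset finiteness and to Lemma \ref{lem6.1} are redundant.
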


The proof is very similar to that of Theorem \ref{thm1.1}. We will mainly focus
on the differences.

We keep the notation and assumptions from Theorem 6.5. 
We call $\alpha_1\in K$ $k$-special
if $\vk (\alpha_1)=K$ and if there are $\alpha_2\kdots \alpha_k\in K$
such that $\alpha_1\kdots \alpha_k$ are pairwise $G(\OO_S)$-inequivalent
and $\OO_{S,\alpha_1}=\cdots =\OO_{S,\alpha_k}$. We call $\alpha_1$ special if it is
$2$-special.

\begin{proof}[Proof of Theorem \ref{thm6.6}]
It suffices to show that if $[K:\vk ]=4$ then the $3$-special numbers in $K$
lie in at most finitely many $G (\OO_S)$-equivalence classes, while if $[K:\vk ]\geq 5$
and ${\rm Gal}(L/\vk )$ is $5$-transitive then the special numbers in $K$
lie in at most finitely many $G(\OO_S)$-equivalence classes.
\\[0.2cm]
{\bf Step 1.} \emph{The $3$-special numbers in $K$ if $[K:\vk ]=4$,
respectively the special numbers in $K$ if $[K:\vk ]\geq 5$ lie in at most finitely many 
$\GL_2(\vk )$-equivalence classes.}
\\[0.1cm]
The proof is exactly the same as that of Proposition \ref{prop3.1},
replacing everywhere $\Zz$, $\Qq$, $\OO_L^*$ by $\OO_S$, $\vk$, $\OO_{S,L}^*$,
where $\OO_{S,L}$ is the integral closure of $\OO_S$ in $L$.
Lemmas \ref{lem3.2} and \ref{lem3.3} can be applied with $\Gamma =\OO_{S,L}^*$,
since the latter group is finitely generated by the Dirichlet-Chevalley-Weil theorem.
\\[0.2cm]
{\bf Step 2.} \emph{Let $K$ be any extension of $\vk$ with $[K:\vk ]\geq 4$.
Then each $\GL_2(\vk )$-equivalence class of special numbers in $K$
is the union of finitely many $G(\OO_S)$-equivalence classes.}
\\[0.1cm]
Let $\CC$ be a $\GL_2(\vk )$-equivalence class of special numbers in $K$.
Completely similarly as in the proof of Proposition \ref{prop5.1},
applying Lemma \ref{lem6.5}, Lemma \ref{lem5.2} with $\Gamma =\OO_{S,L}^*$,
and Lemma \ref{lem6.4},
one shows that $\CC$ is the union of finitely many subclasses
\[
\CC (\DD ):=\{ \alpha \in\CC:\ 
\text{there is $\beta\in\DD$ with }\OO_{S,\alpha}=\OO_{S,\beta}\},
\]
where $\DD\not=\CC$ is a $\GL_2(\vk )$-equivalence class of special numbers.

Let $\DD\not=\CC$ be a $\GL_2(\vk )$-equivalence class such that $\CC (\DD )\not=\emptyset$.
We show by means of a local-to-global argument that $\CC (\DD )$ is the union of 
finitely many $G(\OO_S)$-equivalence classes. 

Fix $\alpha\in\CC (\DD )$, and then $\beta\in\DD$ with $\OO_{S,\alpha}=\OO_{S,\beta }$.
Let $T$ be the set of prime ideals $\fp$ of $\OO_S$ such that $\fp$ divides 
the discriminant ideal $\fd$ of $\OO_{S,\alpha}$,
or such that some prime ideal $\fP$ of $\OO_{S,L}$ above $\fp$
divides the ideal $\fa (\alpha ,\beta )$ of $\OO_{S,L}$
generated by the numbers
$\frac{\cross_{ijkl}(\beta )}{\cross_{ijkl}(\alpha )}-1$
for all pairwise distinct $i,j,k,l\in\{ 1\kdots n\}$. Clearly, $T$ is finite.
Next, choose $\alpha^*\in\CC (\DD )$ that we let vary, and then $\beta^*\in\DD$
with $\OO_{S,\alpha^*}=\OO_{S,\beta^*}$. 

Let $\fp$ be a prime ideal of $\OO_S$.
We apply the theory of Section \ref{section4} with $A=\OO_{\fp}$.
By \eqref{eq6.4} we have 
$\OO_{\fp ,\alpha}=\OO_{\fp ,\beta}$, $\OO_{\fp ,\alpha^*}=\OO_{\fp ,\beta^*}$.
Hence $(\alpha ,\beta )$ and $(\alpha^*,\beta^*)$ are two $\GL_2(\vk )$-equivalent special
pairs as in Proposition \ref{prop4.1}.
Let $C$ be the matrix from \eqref{eq4.1}, i.e., with $\alpha^*=C\alpha$, and put $\Delta :=\det C$.
We use that there is a finite set $\FF ([\Delta ])$ of $2\times 2$-matrices with entries
in $\OO_{\fp}$,
depending only on $\fp$ and on the ideal $[\Delta ]:=\Delta \OO_{\fp}$,  such 
that there is $U\in\GL_2(\OO_{\fp})$ with
\[
UC=:C_1\in \FF ([\Delta ]).
\]
Let $\alpha^{**}:=U\alpha^*=C_1\alpha$. 
Proposition \ref{prop4.1} implies that $[\Delta ]$ belongs to a finite set 
depending on $\alpha ,\beta$ and $\fp$,
hence so does $C_1$, and thus $\alpha^{**}$.
This implies that the $\GL_2(\OO_{\fp})$-equivalence class of $\alpha^*$  belongs to a finite 
collection depending on $\alpha ,\beta ,\fp$.

But for $\fp\not\in T$, i.e., for all but finitely many $\fp$, Proposition \ref{prop4.1} implies that $[\Delta ]=[1]$,
hence $\alpha^*$ is $\GL_2(\OO_{\fp})$-equivalent to $\alpha$.
Now from \eqref{eq6.2} it follows
that there is a finite collection of $G(\OO_S)$-equivalence classes
depending only on $\alpha ,\beta$ to which $\alpha^*$ must belong.
This shows that indeed, $\CC (\DD )$ is the union of finitely
many $G(\OO_S)$-equivalence classes, and completes step 2 of our proof of Theorem \ref{thm6.6}.
\end{proof}

\end{document}